\def\eqdef{\stackrel{\rm def}{=}}
\def\d{{\rm d}}
\def\beq{\begin{equation}}
\def\eeq{\end{equation}}
\def\beqs{\begin{equation*}}
\def\eeqs{\end{equation*}}
\newcommand{\ddt}{\frac{\d}{\d t}}
\newcommand{\varep}{\varepsilon}
\renewcommand{\Re}{\operatorname{Re}}
\renewcommand{\Im}{\operatorname{Im}}
\newcommand{\LL}{\mathcal L}
\newcommand{\iln}{L}
\newcommand{\C}{\mathbb{C}}
\newcommand{\K}{\mathbb{K}}
\newcommand{\R}{\mathbb{R}}
\newcommand{\N}{\mathbb{N}}
\newcommand{\Z}{\mathbb{Z}}
\newcommand{\bigo}{\mathcal{O}}
\newcommand{\tnum}{\rm(\roman*)}
\newcommand{\rnum}{\rm(\alph*)}
\newtheorem{theorem}{Theorem}[section]
\newtheorem{lemma}[theorem]{Lemma}
\newtheorem{proposition}[theorem]{Proposition}
\newtheorem{assumption}[theorem]{Assumption}
\newtheorem{definition}[theorem]{Definition}
\theoremstyle{definition}
\newtheorem{example}[theorem]{Example}
\newtheorem{scenario}[theorem]{Scenario}
\newtheorem{remark}[theorem]{Remark}
\theoremstyle{remark}
\definecolor{darkred}{rgb}{.70,.12,.20}
\definecolor{darkgreen}{rgb}{.20,.52,.14}
\date{\today}
\numberwithin{equation}{section}
\title{Asymptotic expansions about infinity for solutions of nonlinear differential equations with coherently decaying forcing functions}
\author{Luan Hoang}
\address{Department of Mathematics and Statistics,
Texas Tech University\\
1108 Memorial Circle, Lubbock, TX 79409--1042, U. S. A.}
\email{luan.hoang@ttu.edu}
\thanks{\textit{Acknowledgement.}  The author would like to thank Dat Cao for many valuable discussions.}
\keywords{asymptotic expansions, long-time dynamics, non-autonomous systems, dissipative dynamical systems, perturbations}
\subjclass[2020]{34E05, 34E10, 41A60}
\begin{document}
 
\begin{abstract}
This paper studies, in fine details, the long-time asymptotic behavior of decaying solutions of a general class of dissipative systems of nonlinear differential equations in complex Euclidean spaces. The forcing functions decay, as time tends to infinity, in a coherent way expressed by combinations of the exponential, power, logarithmic and iterated logarithmic functions. The decay may contain sinusoidal oscillations not only in time but also in the logarithm and iterated logarithm of time. It is proved that the decaying solutions admit corresponding asymptotic expansions, which can be constructed concretely. In the case of the real  Euclidean spaces, the real-valued decaying solutions are proved to admit real-valued asymptotic expansions. Our results unite and extend the theory investigated  in many previous works.
\end{abstract}

\maketitle
\tableofcontents
 
\pagestyle{myheadings}\markboth{L. Hoang}
{Nonlinear Differential Equations  with Coherently Decaying Forcing Functions}

\section{Introduction}
This paper broadens the theory of the asymptotic expansions, as time tends to infinity, for solutions of systems of nonlinear ordinary differential equations (ODEs) that was studied in previous work \cite{CaH3}. The paper \cite{CaH3} itself was motivated by Foias--Saut's work   \cite{FS87} for the Navier--Stokes equations (NSE). 
In Foias and Saut's paper  \cite{FS87}, the NSE are written in the functional form as an ODE  
\beq\label{NSE}
u_t+Au+B(u,u)=0,
\eeq
where $A$ is a linear operator, and $B$ is a bi-linear form.
They proved that every solution $u(t)$ of \eqref{NSE} admits the following asymptotic expansion, as $t\to\infty$,
\beq\label{nsexp}
u(t)\sim \sum_{k=1}^\infty q_k(t)e^{-\mu_k t},
\eeq
where $q_k(t)$'s are polynomials in $t$ valued in some functional spaces, and $\mu_k$'s are positive numbers increasing strictly to infinity. Briefly speaking,  the solution $u(t)$ can be approximated, as $t\to\infty$, by the finite sum 
\beqs
s_N(t):=\sum_{k=1}^N q_k(t)e^{-\mu_k t},\text{ for }N\in\N,
\eeqs 
in the sense that the remainder $u(t)-s_N(t)$, as $t\to\infty$,  decays exponentially faster than the fastest decaying mode $e^{-\mu_N t}$ in $s_N(t)$. Because $\mu_k\nearrow\infty$, the asymptotic expansion \eqref{nsexp} provides very fine details about the asymptotic behavior of $u(t)$.

The result \eqref{nsexp} has been developed in many directions. One can study its associated normalization map, normal form, invariant nonlinear spectral manifold, etc., see \cite{FS84a,FS84b,FS91,FHS1,FHOZ1,FHOZ2,FHN1,FHN2} and references therein. The interested reader is referred to the survey \cite{FHS2} for more information on the subject.
A similar asymptotic expansion is obtained for the NSE of rotating fluids \cite{HTi1}. 
The asymptotic expansions for the associated Lagrangian trajectories are derived in \cite{H4} based on \eqref{nsexp}.

The asymptotic expansions of the type \eqref{nsexp} can also be established for other partial differential equations (PDEs) and ODEs. They are obtained in \cite{Shi2000} for a class of PDEs including dissipative wave equations, in \cite{Minea} for a system of analytic ODEs, and in \cite{CaHK1} for non-smooth ODEs. These results are for homogeneous equations which have more general nonlinearity than the bi-linear form in \eqref{NSE}. 

Concerning with inhomogeneous equations instead,  the papers \cite{HM2,CaH1,CaH2} study the NSE \eqref{NSE} with a forcing function $f(t)$ added to its right-hand side. It is proved that if  $f(t)$ decays to zero in a coherent way, then any solution $u(t)$ admits  an asymptotic expansion with a correspondingly coherent decay \cite{HM2,CaH1,CaH2}.

For more general inhomogeneous systems of ODEs, paper \cite{CaH3} investigates 
\beq\label{sys}
 y'=-Ay + G(y)+f(t) \text{ in }\R^n,
 \eeq
 where $A$ is a real diagonalizable $n\times n$ real matrix with positive eigenvalues, and $G(y)$ has the Taylor's expansion about the origin starting the quadratic monomials.
 It proves that if 
\beq\label{fyex}
f(t)\sim \sum p_k(\phi(t))\psi(t)^{-\gamma_k},\text{ then }
y(t)\sim \sum q_k(\phi(t))\psi(t)^{-\gamma_k},
\eeq
where, roughly speaking, $p_k$'s and $q_k$'s are functions of the same type.
For example, 
\begin{itemize}
\item $\psi(t)=e^t$, $\phi(t)=t$, $p_k$'s and $q_k$'s are polynomials. This yields the same asymptotic expansions as \eqref{nsexp}.
\item  $\psi(t)=t$, $\phi(t)=(\ln t,\ln\ln t)$, $p_k$'s and $q_k$'s are real power functions of two variables.
\item $\psi(t)=\ln t$, $\phi(t)=(\ln\ln t,\ln\ln\ln t,\ln\ln\ln\ln t)$, $p_k$'s and $q_k$'s are real power functions of three variables.
\end{itemize}

Although the asymptotic expansions in \eqref{fyex} are rather complicated, some common elements are still missing. For instance, because of the assumptions on the matrix $A$ and the real powers in the functions $p_k$'s and $q_k$'s, no oscillations  are  present in \eqref{fyex}. This paper will overcome this deficiency.
In fact, its new features are the following:
\begin{enumerate}[label=\rnum]
\item \label{new1} Studying \eqref{sys} in both $\C^n$ and $\R^n$.
\item  \label{new2} Treating a general matrix $A$, namely, $A$ is only required to have eigenvalues with positive real parts.
\item \label{new3} Allowing $p_k$'s and $q_k$'s to have complex powers, therefore, allowing  the asymptotic expansions  in \eqref{fyex} to have sinusoidal oscillations not only in $t$ but also in $\ln t$, $\ln\ln t$, etc. 
\item \label{new4} Providing more general but still concrete constructions for the $q_k$'s.
\end{enumerate}


Note that previous papers \cite{Minea,Shi2000} consider complex exponential functions and, hence,  already obtain asymptotic expansions  with sinusoidal functions $\cos(\omega t)$ and $\sin(\omega t)$. These functions are both oscillating and periodic. (See also \cite{HTi1} when the oscillation occurs due to the Coriolis effect.)
However, \ref{new3}	will allow the asymptotic expansions to contain functions such as  $\cos(\omega \ln t)$ and $\sin(\omega \ln\ln t)$, which are oscillating but not periodic.

Even for the homogeneous system, i.e. $f=0$, the current work still adds some contributions to the current literature.
	In \cite{Minea}, although the condition on $A$ is the same as in \ref{new2}, the function $F(y):=-Ay+G(y)$ is required to be analytic. The constructions of $q_k$'s are not explicit and  depend crucially on the analytic flows associated with the vector field $F(y)$. 	On contrary, this paper and \cite{CaH3} do not require the analyticity and, hence, cannot use the constructions in \cite{Minea}. 
	The work \cite{Shi2000}, when viewed in the ODE context, requires $A$ to be complex diagonalizable, which is more stringent than \ref{new2} above.
	Our constructions in \ref{new4} will be more concrete than \cite{Minea} and more general than \cite{Shi2000,CaH3}.

There are other asymptotic expansion theories for analytic ODEs such as Lyapunov's First Method,  see e.g.\cite[Chapter I, \S4]{BibikovBook} or \cite[Chapter V, \S3]{LefschetzBook}, and the Poincar\'e--Dulac normal form. 
The latter normal form theory has been developed much more by Bruno into the theory of power geometry,  see \cite{BrunoBook1989,BrunoBook2000,Bruno2004,Bruno2008c,Bruno2012,Bruno2018} and the references therein.
However, his classes of equations, results and methods are  very different from ours. 

The paper is organized as follows.
	Section \ref{notasec} presents the notation that we use throughout. 
	Section \ref{classtype} gives definitions of the asymptotic expansions that we study, see Definitions \ref{EEdef} and \ref{Lexpand}. For the latter, they involve specific classes of power functions which allow the bases to be logarithmic and iterated logarithmic functions, and the powers to be complex numbers. Therefore, they are more general and sophisticated than the previously studied expansions in \cite{Minea,Shi2000,CaH1,CaH3}.
	Section \ref{assumsec} imposes the main conditions in Assumption \ref{assumpA}  for the matrix $A$ and in Assumption \ref{assumpG} for the nonlinear function $G$. 
	In Section \ref{tools}, we establish the asymptotic approximations, as $t\to\infty$, for the solutions of the linearized equations of \eqref{sys}. Theorem \ref{approx1} treats the case of exponentially decaying forcing functions. The constructions of the approximating functions in Theorem \ref{approx1} are concrete, rather simple and close the original ideas of Foias and Saut. Theorem \ref{iterlog} deals with the case of power, logarithmically, and iterated logarithmically decaying forcing functions. The constructions in Theorem \ref{iterlog} are subtler than those in \cite{CaH3}, see operator $\mathcal Z_A$ in Definition \ref{defMRZ}. Besides their own merits, the results  are the key building blocks for the nonlinear problem later. 
	Section \ref{odesec} deals with the nonlinear systems. Theorem \ref{thmsmall} establishes the global existence and uniqueness for the solutions under the smallness conditions on the initial data and forcing functions. Solutions that decay to zero are also obtained. The main asymptotic estimate for large time is in Theorem \ref{thmdecay}. It provides a specific decaying rate for the solution corresponding to the asymptotic behavior of the forcing function. This  estimate will be needed in Sections \ref{eforce}--\ref{lforce}.
	Our main results for the systems in $\C^n$ are in Sections \ref{eforce}--\ref{lforce}, where we establish the asymptotic expansions for the decaying solutions. Theorem \ref{mainthm} is for exponentially decaying forcing functions,  Theorem \ref{mainthm2} is for power decaying forcing functions, and  Theorem \ref{mainthm3} is for logarithmically and iterated  logarithmically decaying forcing functions.
	The case of systems in $\R^n$ is dealt with in Section \ref{realsec}. The counterparts of the results in Sections \ref{eforce}--\ref{lforce} are Theorems \ref{newthmE} and \ref{newthmPL}. The proofs use the complexification technique but guarantee that the approximating functions still  stay in $\R^n$. These results are expressed even more clearly using only real-valued functions  in Theorems \ref{thmE3} and \ref{thmPL3}. 
	Appendix \ref{append} contains  elementary proofs of Lemma \ref{uniE} and Proposition \ref{unipres}.
	
	We end this Introduction with a note that the ideas and techniques in the current paper can be applied to problems in PDEs either straightforwardly or with some sophisticated adjustments.
	 
\section{Notation}\label{notasec}
We  use the following notation throughout the paper. 
\begin{itemize}
 \item $\N=\{1,2,3,\ldots\}$ denotes the set of natural numbers, and $\Z_+=\N\cup\{0\}$.
  
 \item Number $i=\sqrt{-1}$.

 \item For any vector $x\in\C^n$, we denote by $|x|$ its Euclidean norm, and by $x^{(k)}$ the $k$-tuple $(x,\ldots,x)$ for $k\in\N$, and  $x^{(0)}=1$. The real part, respectively, imaginary part,  of $x$ is denoted by $\Re x$, respectively, $\Im x$.
 
\item For an $m\times n$ matrix $M$ of complex numbers, its Euclidean norm in $\R^{mn}$ is denoted by $|M|$. 

\item We will use the convention 
\beqs
\sum_{k=1}^0 a_k=0\quad\text{and}\quad \prod_{k=1}^0 a_k=1.
\eeqs
 
 \item Let $f$ be a $\C^m$-valued function and $h$ be a non-negative function, both  are defined in a neighborhood of the origin in $\C^n$. 
 We write 
 $$f(x)=\bigo(h(x))\text{  as }x\to 0,$$
  if there are positive numbers  $r$ and $C$ such that $|f(x)|\le Ch(x)$ for all $x\in\C^n$ with $|x|<r$.   
 
 \item Let $f:[T_0,\infty)\to \C^n$ and $h:[T_0,\infty)\to[0,\infty)$ for some $T_0\in \R$. We write 
 $$f(t)=\bigo(h(t)), \text{ implicitly meaning as $t\to\infty$,} $$
 if there exist numbers $T\ge T_0$ and $C>0$ such that $|f(t)|\le Ch(t)$ for all $t\ge T$.

In the case $h(t)>0$ for all $t\ge T_0$, we write 
 $$f(t)=o(h(t)), \text{   if }\lim_{t\to\infty} \frac{|f(t)|}{h(t)}=0. $$
 
 \item Let $T_0\in\R$, functions $f,g:[T_0,\infty)\to\C^n$, and $h:[T_0,\infty)\to[0,\infty)$. 
We will conveniently write  
$$f(t)=g(t)+\bigo(h(t))\text{ to indicate } f(t)-g(t)=\bigo(h(t)).$$  
\end{itemize}

Let $\K=\R$ or $\C$.
If $m\in \N$ and $\mathcal L$ is an $m$-linear mapping (over $\K$) from $(\K^n)^m$ to $\K^n$, then the norm  $\|\mathcal L\|$ is defined by
\beqs
\|\mathcal L\|=\max\{ |\mathcal L(y_1,y_2,\ldots,y_m)|:y_j\in\K^n,|y_j|=1,\text{ for } 1\le j\le m\}.
\eeqs

Then $\|\mathcal L\|$ is a number in $[0,\infty)$, and
\beq\label{multiL}
|\mathcal L(y_1,y_2,\ldots,y_m)|\le \|\mathcal L\|\cdot |y_1|\cdot |y_2|\cdots |y_m|
\eeq
for any $y_1,y_2,\ldots,y_m\in\K^n$.

\begin{definition}\label{mset}
Let $S$ be a subset of $\C$. 
\begin{itemize}
\item We say $S$ preserves the addition if $x+y\in S$ for all $x,y\in S$.
\item We say $S$ preserves the unit increment if $x+1\in S$ for all $x\in S$.
\item The additive semigroup generated by $S$ is defined by 
\beqs
\langle S \rangle =\left\{\sum_{j=1}^N z_j:N\in\N,z_j\in S\text{ for }1\le j\le N\right\}.
\eeqs
\item The real part of $S$ is
$\Re S=\{\Re z:z\in S\}$.
\end{itemize}
\end{definition}

Regarding Definition \ref{mset}, it is obvious that $\langle S\rangle $ preserves the addition, and $\Re\langle S\rangle  =\langle\Re S\rangle$.

If $1\in S$, then $\langle S\rangle $ preserves the unit increment.

\section{Classes of functions and types of asymptotic expansions} \label{classtype}

\subsection{The complex power  function}
In this paper, we only deal with single-valued complex functions. To avoid any ambiguity we recall basic definitions and properties of elementary complex functions.

For $z\in\C$ and $t>0$, the exponential and power functions are defined by
\beq\label{ep}
\exp(z)=\sum_{k= 0}^\infty \frac{z^k}{k!}
\text{ and }
t^z=\exp(z\ln t).
\eeq

When $t=e=\exp(1)$ in \eqref{ep}, one has the usual identity
$e^z=\exp (z)$.

If $z=a+ib$ with $a,b\in\R$, then 
\beqs
t^z=t^a (\cos(b\ln t)+i\sin(b\ln t))\text{ and }|t^z|=t^a.
\eeqs

The standard properties of the power functions still hold, namely, 
\beqs
 t^{z_1}t^{z_2}=t^{z_1+z_2},\quad (t_1t_2)^z=t_1^z t_2^z,\quad (t^z)^m =t^{m z}= (t^m)^z,\quad \ddt (t^z)=zt^{z-1},
\eeqs
for any $t,t_1,t_2>0$, $z,z_1,z_2\in\C$, and $m\in\Z_+$.

\subsection{Iterated exponential and logarithmic functions}
\begin{definition}\label{ELdef} Define the iterated exponential and logarithmic functions as follows:
\begin{align*} 
&E_0(t)=t \text{ for } t\in\R,\text{ and } E_{m+1}(t)=e^{E_m(t)}  \text{ for } m\in \Z_+, \ t\in \R,\\
&\iln_{-1}(t)=e^t,\quad \iln_0(t)=t\text{ for } t\in\R,\text{ and }\\
& \iln_{m+1}(t)= \ln(\iln_m(t)) \text{ for } m\in \Z_+,\ t>E_m(0).
\end{align*}

For $k\in \Z_+$, define
\beqs 
\LL_k=(\iln_1,\iln_2,\ldots,\iln_k)\quad\text{and}\quad
\widehat \LL_k=(\iln_{-1},\iln_{0},\iln_{1},\ldots,\iln_{k}).
\eeqs
\end{definition}

Explicitly,
\beqs  
\widehat \LL_k(t)=(e^t,t,\ln t,\ln\ln t,\ldots,\iln_{k}(t)).
\eeqs 

The function $\LL_k$ is used to formulate the results in the previous work \cite{CaH3}. Instead, this paper will use $\widehat \LL_k$, which is more general than $\LL_k$.

For $m\in\Z_+$, note that 
\begin{align} 
&\text{$\iln_m(t)$ is positive and increasing  for $t>E_m(0)$, }\label{Linc}\\
&\iln_m(E_{m+1}(0))=1, \quad 
\lim_{t\to\infty} \iln_m(t)=\infty.\label{Lone}
\end{align}

It is also clear that 
\beq  \label{LLk}
\lim_{t\to\infty} \frac{\iln_k(t)^\lambda}{\iln_m(t)}=0\text{ for all }k>m\ge -1 \text{ and } \lambda\in\R.
\eeq 

For $m\in \N$, the derivative of $\iln_m(t)$ is
\beq\label{Lmderiv}
 \iln_m'(t)=\frac 1{t\prod_{k=1}^{m-1} \iln_k (t)}=\frac 1{\prod_{k=0}^{m-1} \iln_k (t)}. 
\eeq

By L'Hospital's rule and induction, one can verify that
\beq\label{Lshift}
\lim_{t\to\infty}\frac{\iln_m(T+t)}{\iln_m(t)}=1 \text{ for any $m\in\Z_+$ and $T\in \R$.}
\eeq

\subsection{Classes of functions}
Our results will cover a wide range of asymptotic expansions which  involve the following types of functions.

\begin{definition}\label{newclass} 
Let $X$ be a linear space over $\C$.
\begin{enumerate}[label=\tnum]
\item\label{dFE1} Define $ \mathcal F_E(X)$ to be the collection of functions $g:\R\to X$ of the form
\beq\label{gEform}
g(t)=\sum_{\lambda\in S} p_\lambda(t)e^{\lambda t} \text{ for }t\in\R,
\eeq
where $S$ is some finite subset of $\C$, and each $p_\lambda$ is a polynomial from $\R$ to $X$.

\item\label{dFE2} For $\mu\in\R$, define
 \beqs
 \mathcal F_E(\mu,X)=\Big \{\text{function } g(t)=\sum_{\lambda\in S} p_\lambda(t)e^{\lambda t}\in \mathcal F_E(X): \Re\lambda=\mu\text{ for all }\lambda\in S\Big\}.
 \eeqs
 \end{enumerate}
 \end{definition}
 
Clearly, $\mathcal F_E(X)$ is a linear space over $\C$ and $\mathcal F_E(\mu,X)$ is a subspace of $\mathcal F_E(X)$.  

 In particular, when $\mu=0$,
  \begin{align*}
 \mathcal F_E(0,X)
 &=\Big \{ \text{function } g(t)=\sum_{\omega\in \Omega} p_\omega(t)e^{i\omega t}:
 \text{ $\Omega$ is a finite subset of $\R$,} \\
&\qquad  \text{ each $p_\omega$ is an $X$-valued polynomial on $\R$} \Big\}.
 \end{align*}

It is clear that
 \beq\label{ghequiv}
 g\in  \mathcal F_E(\mu,X) \text{ if and only if } g(t)=h(t)e^{\mu t} \text{ for some } h\in  \mathcal F_E(0,X). 
 \eeq
 
Also, if $(X,\|\cdot\|_X)$ is a normed space and $g\in \mathcal F_E(\mu,X)$, then
 \beq\label{fEorder}
\| g(t)\|_X=\bigo(e^{(\mu+\delta)t}) \quad\text{ for all }\delta>0.
 \eeq

Regarding the uniqueness of the representation of $g(t)$ in \eqref{gEform}, we have the following lemma.

\begin{lemma}\label{uniE}
Let $X$ be a normed space over $\C$.
Suppose two functions $g$ and $h$ in $\mathcal F_E(\mu,X)$ satisfy 
\beq\label{gminush}
\lim_{t\to\infty} e^{-\mu t}(g(t)-h(t))=0,
\eeq
and are given by
\beq\label{gvsh}
g(t)=\sum_{\lambda\in S} p_\lambda(t)e^{\lambda t}\text{ and }
h(t)=\sum_{\lambda\in S} q_\lambda(t)e^{\lambda t},   \text{ for }t\in\R,
\eeq
where  $S$ is  a finite subset of $\C$, $\Re\lambda=\mu$ for all $\lambda\in S$, and $p_\lambda$'s and $q_\lambda$'s are polynomials from $\R$ to $X$.

Then $p_\lambda=q_\lambda$ for all $\lambda\in S$.
\end{lemma}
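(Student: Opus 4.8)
The plan is to reduce the claim to the statement that if $r(t) := g(t)-h(t) = \sum_{\lambda\in S} r_\lambda(t)e^{\lambda t}$ with $r_\lambda = p_\lambda - q_\lambda$ a polynomial for each $\lambda$, and if $e^{-\mu t}r(t)\to 0$, then every $r_\lambda$ is the zero polynomial. By the equivalence \eqref{ghequiv} we may factor out $e^{\mu t}$: writing $\lambda = \mu + i\omega_\lambda$ with $\omega_\lambda\in\R$ pairwise distinct, the hypothesis becomes $\sum_{\lambda\in S} r_\lambda(t)e^{i\omega_\lambda t}\to 0$ as $t\to\infty$, and we must show each $r_\lambda\equiv 0$. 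So it suffices to prove the following: a finite sum $\sum_{j=1}^m P_j(t)e^{i\omega_j t}$ of $X$-valued polynomial-times-exponential terms with distinct real frequencies $\omega_j$ that tends to $0$ at infinity must be identically zero.

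First I would dispose of the scalar heart of the matter. Fix any bounded linear functional $\ell$ on $X$ (or, if one prefers to stay elementary, fix a coordinate by choosing any vector-component functional), and apply it to the sum; since $\ell$ is continuous and linear, $\sum_j (\ell\circ P_j)(t)e^{i\omega_j t}\to 0$ with $\ell\circ P_j$ a scalar polynomial. Now I would argue that a scalar exponential polynomial $\sum_j p_j(t)e^{i\omega_j t}$ with distinct $\omega_j$ that vanishes at infinity is the zero function. One clean way: let $d$ be the maximal degree among the $p_j$, and extract the top-degree behavior — divide by $t^d$ and integrate/average. More concretely, I would use the standard averaging trick: for a fixed frequency $\omega_k$, the limit
\begin{equation*}
\lim_{T\to\infty}\frac{1}{T}\int_0^T e^{-i\omega_k t}\Bigl(\sum_j p_j(t)e^{i\omega_j t}\Bigr)t^{-d}\,\d t
\end{equation*}
isolates (up to a nonzero constant) the leading coefficient of $p_k$, because the cross terms $e^{i(\omega_j-\omega_k)t}$ with $j\neq k$ oscillate and average to something lower order, while the left-hand side is $0$ since the integrand tends to $0$. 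Hence the leading coefficient of each $p_k$ vanishes; decreasing $d$ by one and inducting on the degree gives $p_k\equiv 0$ for all $k$. Applying this to all functionals $\ell$ (enough of them to separate points of $X$, e.g. the finitely many coordinate functionals relevant to the coefficients that actually appear) forces every $X$-valued coefficient of every $P_j$ to be zero, hence $r_\lambda = p_\lambda - q_\lambda = 0$, i.e. $p_\lambda = q_\lambda$.

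I expect the main obstacle to be purely bookkeeping rather than conceptual: one must be a little careful that $X$ need only be a normed space (not complete, not finite-dimensional), so "apply a separating family of functionals" should be phrased as "fix coordinates of the coefficient vectors in a finite-dimensional subspace of $X$ spanned by all coefficients of all $p_\lambda, q_\lambda$," after which everything is genuinely finite-dimensional and the scalar lemma applies verbatim. A cleaner alternative, which avoids functionals altogether, is to induct directly on the total degree $d$ of the $X$-valued exponential polynomial $r(t)$: multiply $r(t)e^{-i\omega_k t}$ by $t^{-d}$, average over $[0,T]$ as above, use \eqref{Lshift}-style elementary limits together with $\frac1T\int_0^T e^{i\sigma t}\,\d t\to 0$ for $\sigma\neq 0$ to kill cross terms, and conclude the leading $X$-coefficient of $r_{\lambda_k}$ is $0$; then peel off the top degree and repeat. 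Either route is routine once the reduction in the first paragraph is in place, so the only real work is setting up that reduction and the averaging estimate cleanly.
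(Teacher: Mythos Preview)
Your proposal is correct and follows essentially the same architecture as the paper's proof: pass to the finite-dimensional span of all coefficients of the $p_\lambda,q_\lambda$, divide by the top power $t^{d}$ to reduce to constants, and then prove the scalar statement that $\sum_j c_j e^{i\omega_j t}\to 0$ with distinct real $\omega_j$ forces every $c_j=0$. The one place you diverge is in this scalar kernel: you isolate $c_k$ via a Ces\`aro average $\tfrac1T\int^T e^{-i\omega_k t}(\cdot)\,\d t$ (take the lower limit positive, since $t^{-d}$ is singular at $0$), whereas the paper instead shifts all frequencies to be positive, integrates over one period $[t,t+2\pi/r_{m+1}]$ to annihilate the top frequency, and inducts on the number of terms. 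Both are standard orthogonality arguments; your averaging is slightly more direct and avoids the induction on the number of frequencies, while the paper's period-integration avoids having to track the $O(1/t)$ remainders coming from $p_j(t)/t^d$.
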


We give an elementary proof of Lemma \ref{uniE} in  Appendix  \ref{append}.

Concerning the assumption \eqref{gvsh}, the two functions $g$ and $h$  initially may not have the same set $S$ in their representations. For example,
 \beq\label{gh0}
g(t)=\sum_{\lambda\in S_g} p_\lambda(t)e^{\lambda t}\text{ and }
h(t)=\sum_{\lambda\in S_h} q_\lambda(t)e^{\lambda t},   \text{ for }t\in\R.
\eeq

By setting $S=S_g\cup S_h$,  and adding the zero functions to the sums for $g(t)$ and $h(t)$ when needed, we can rewrite \eqref{gh0} as \eqref{gvsh}.

\begin{proposition}\label{unipres}
Suppose $g\in \mathcal F_E(X)$ is given by \eqref{gEform} for some non-empty set $S$.
Then the polynomials $p_\lambda$'s are unique for $\lambda\in S$.
If, in addition, $p_\lambda\ne 0$ for all $\lambda\in S$, then the set $S$  is unique.
\end{proposition}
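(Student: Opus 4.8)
The plan is to deduce both uniqueness statements from Lemma \ref{uniE} by decomposing $g$ according to the real parts of the exponents in $S$. Write $\Re S = \{\mu_1, \mu_2, \ldots, \mu_r\}$ for the distinct real parts occurring among the elements of $S$, and for each $j$ set $S_j = \{\lambda \in S : \Re\lambda = \mu_j\}$, so that $S$ is the disjoint union of the $S_j$'s. Correspondingly decompose $g = \sum_{j=1}^r g_j$, where $g_j(t) = \sum_{\lambda\in S_j} p_\lambda(t) e^{\lambda t} \in \mathcal F_E(\mu_j, X)$. The first step is to observe, using \eqref{fEorder}, that if $\mu_j < \mu_{j'}$ then $\|g_j(t)\|_X = \bigo(e^{(\mu_j+\delta)t})$ for every $\delta > 0$; choosing $\delta$ with $\mu_j + \delta < \mu_{j'}$ shows $e^{-\mu_{j'} t} g_j(t) \to 0$ as $t\to\infty$. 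Hence each individual $g_j$ is uniquely recoverable from $g$: if we order the real parts so that $\mu_1 > \mu_2 > \cdots > \mu_r$, then $e^{-\mu_1 t} g(t) = e^{-\mu_1 t} g_1(t) + e^{-\mu_1 t}\sum_{j\ge 2} g_j(t)$, and the second group tends to $0$, so $g_1$ is determined by $g$ as the unique element of $\mathcal F_E(\mu_1, X)$ with $e^{-\mu_1 t}(g(t) - g_1(t)) \to 0$; then subtract and induct on $g - g_1$ to recover $g_2, \ldots, g_r$ in turn.

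The second step applies Lemma \ref{uniE} within each real-part block. Suppose $g$ has another representation $g(t) = \sum_{\lambda \in S'} \tilde p_\lambda(t) e^{\lambda t}$. Group its exponents by real part as well, forming blocks $\tilde g_j$, one for each distinct real part appearing in $S'$. By the first step, for each real part $\mu$ occurring in either $S$ or $S'$, the block of $g$ with that real part is uniquely determined by $g$; in particular the set of real parts occurring in $S'$ must coincide with $\Re S$, and the corresponding blocks agree: $g_j = \tilde g_j$ for each $j$. Now $g_j$ and $\tilde g_j$ are both in $\mathcal F_E(\mu_j, X)$ and are literally equal (so \eqref{gminush} holds trivially with difference $0$); after enlarging both index sets to their common union $S_j \cup S_j'$ and padding with zero polynomials as explained after Lemma \ref{uniE}, Lemma \ref{uniE} forces the polynomial coefficients to match term by term. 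Thus for every $\lambda \in S_j \cup S_j'$ the coefficient in $g_j$ equals that in $\tilde g_j$; in particular $\tilde p_\lambda = p_\lambda$ for $\lambda \in S \cap S'$, while any $\lambda \in S \setminus S'$ has $p_\lambda = 0$ and any $\lambda \in S' \setminus S$ has $\tilde p_\lambda = 0$. This proves the uniqueness of the polynomials $p_\lambda$ for $\lambda \in S$ (extending $p_\lambda := 0$ for $\lambda$ outside $S$ gives a well-defined function on all of $\C$).

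For the final assertion, suppose in addition that $p_\lambda \ne 0$ for all $\lambda \in S$, and let $S'$ be the exponent set of any other representation of $g$ in which all coefficient polynomials are nonzero. By the previous paragraph the coefficient of $e^{\lambda t}$ in $g$ is uniquely determined for every $\lambda\in\C$ (taking the value $0$ off $S$ and off $S'$). If $\lambda \in S$ then this common coefficient is $p_\lambda \ne 0$, so $\lambda \in S'$; conversely if $\lambda \in S'$ its coefficient is nonzero, hence cannot be the zero polynomial, so $\lambda \in S$. Therefore $S = S'$.

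I expect the only genuinely substantive point to be the separation-by-real-part argument in the first step — making precise that blocks with different real parts cannot interfere, via the growth estimate \eqref{fEorder} — since once the problem is localized to a single real part, Lemma \ref{uniE} does all the remaining work. Everything else is bookkeeping: tracking index sets, padding with zero polynomials, and the elementary observation that a nonzero polynomial coefficient pins down membership in the exponent set.
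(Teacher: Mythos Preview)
Your argument has a genuine gap: you invoke both the growth estimate \eqref{fEorder} and Lemma \ref{uniE} directly on the space $X$, but both of those results are stated only for \emph{normed} spaces, whereas in Definition \ref{newclass} --- and hence in Proposition \ref{unipres} --- the space $X$ is merely a linear space over $\C$. The paper's own proof flags this explicitly: ``Because $X$ is not assumed to be a normed space, we cannot apply Lemma \ref{uniE} straightforwardly.'' Without a norm on $X$, the expression $\|g_j(t)\|_X = \bigo(e^{(\mu_j+\delta)t})$ in your first step has no meaning, and the limit hypothesis \eqref{gminush} of Lemma \ref{uniE} cannot even be formulated.

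The paper's fix is to subtract the two representations (reducing to $g=0$) and then observe that all the coefficient vectors appearing in the polynomials $p_\lambda$ lie in a \emph{finite-dimensional} subspace $E$ of $X$. Expanding in a basis $\{Y_k\}$ of $E$ converts the single $X$-valued identity into finitely many $\C$-valued identities $\sum_{\lambda\in S} Q_{\lambda,k}(t) e^{\lambda t} = 0$, to which Lemma \ref{uniE} (together with your real-part stratification) applies legitimately because $\C$ is normed. Your overall strategy --- separating by real parts and then invoking Lemma \ref{uniE} blockwise --- is exactly what the paper does once it has reduced to the scalar case; the missing ingredient in your write-up is precisely this passage to finite dimensions.
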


The proof of Proposition \ref{unipres} will be presented in Appendix  \ref{append}.

Next, we consider the power functions of several variables and complex exponents.
For 
\begin{align}
\label{azvec} 
z&=(z_{-1},z_0,z_1,\ldots,z_k)\in (0,\infty)^{k+2}
\text{ and } 
\alpha=(\alpha_{-1},\alpha_0,\alpha_1,\ldots,\alpha_k)\in \C^{k+2},
\end{align}
 define 
 \beq\label{vecpow}
 z^{\alpha}=\prod_{j=-1}^k z_j^{\alpha_j}.
 \eeq

\begin{definition}\label{specind}
For $\mu\in\R$, $m,k\in\Z$ with  $k\ge m\ge -1$, denote by $\mathcal E(m,k,\mu)$ the set of vectors $\alpha$ in \eqref{azvec} 
 such that
 \beqs
\Re(\alpha_j)=0 \text{ for $-1\le j<m$   and  } \Re(\alpha_m)=\mu.
\eeqs 
\end{definition}

In particular, when $m=-1$, $k\ge -1$, $\mu=0$, the set $\mathcal E(-1,k,0)$ is the collection of vectors $\alpha\in \C^{k+2}$ as in \eqref{azvec} with $\Re (\alpha_{-1})=0$.

Let $k\ge m\ge -1$, $\mu\in\R$, and $\alpha=(\alpha_{-1},\alpha_0,\ldots,\alpha_k)\in\mathcal E(m,k,\mu)$.
Consider $\widehat\LL_{k}(t)^\alpha$ for $t>E_k(0)$.

For $j<m$, the power $\alpha_j$ is imaginary, hence $|\iln_j(t)^{\alpha_j}|\le 1$.

For $j=m$, surely $|\iln_m(t)^{\alpha_m}|=L_m(t)^{\mu}$. 

For $j>m$, one has, thanks to \eqref{LLk},  $|\iln_j(t)^{\alpha_j}|=o(L_m(T_*+t)^s)$ for all $s>0$.

Therefore,
\beq\label{LLo}
\lim_{t\to\infty} \frac{\widehat\LL_{k}(t)^\alpha}{\iln_{m}(t)^{\mu+\delta}}=0 \quad\text{ for any }\delta>0.
\eeq

 \begin{definition}\label{Fclass}
 Let $\K$ be $\C$ or $\R$,  and  $X$ be a linear space over $\K$.

\begin{enumerate}[label=\tnum]
\item For $k\ge -1$, define $\mathscr P(k,X)$ to be the set of functions of the form 
\beq\label{pzdef} 
p(z)=\sum_{\alpha\in S}  z^{\alpha}\xi_{\alpha}\text{ for }z\in (0,\infty)^{k+2},
\eeq 
where $S$ is some finite subset of $\K^{k+2}$, and each $\xi_{\alpha}$ belongs to $X$.

\item Let $\K=\C$, $k\ge m\ge -1$ and $\mu\in\R$. 

Define $\mathscr P_{m}(k,\mu,X)$ to be set of functions of the form \eqref{pzdef},
where $S$ is a finite subset of $\mathcal E(m,k,\mu)$ and each $\xi_{\alpha}$ belongs to $X$.

Define
 \beqs
 \mathscr F_{m}(k,\mu,X)=\Big\{ p\circ \widehat{\LL}_{k}: p\in\mathscr P_{m}(k,\mu,X)\Big\}.
 \eeqs
 \end{enumerate}
 \end{definition}

Explicitly, if $f\in \mathscr F_{m}(k,\mu,X)$ then it is a function from $(E_k(0),\infty)$ to $X$ 
and
\beqs
f(t)=\sum_{\alpha\in S}  \widehat \LL_k(t)^\alpha \xi_\alpha\quad\text{ for } t>E_k(0),
\eeqs
where $S$ is a finite subset of $\mathcal E(m,k,\mu)$, and all $\xi_\alpha$'s belong to $X$.
 
Below are immediate observations about Definition \ref{Fclass}. 

\begin{enumerate}[label=(\alph*)]
 \item\label{Ca} $\mathscr P(k,X)$ contains all polynomials from $\R^{k+2}$ to $X$, in the sense that, if $p:\R^{k+2}\to X$ is a polynomial, then its restriction on $(0,\infty)^{k+2}$ belongs to $\mathscr P(k,X)$.
 
 \item\label{Cb} Each $\mathscr P(k,X)$ is a linear space over $\K$.

 \item\label{Cc} If $m>k\ge -1$, then $\mathscr P(k,X)$ can be embedded into $\mathscr P(m,X)$, see Remark (c) after Definition 2.7 in \cite{CaH3}.
 
 \item\label{Cd} One has
 \beq\label{qpequiv}
 q\in  \mathscr F_{m}(k,\mu,X) \text{ if and only if } q(t)=p(t) L_m(t)^{\mu} \text{ for some } p\in  \mathscr F_{m}(k,0,X). 
 \eeq
 
 \item \label{Ce} For any $k\ge m\ge 0$ and $\mu\in\R$, one has
\beq\label{Pm10}
\mathscr P_m(k,\mu,X)\subset \mathscr P_{-1}(k,0,X) . 
\eeq

 \end{enumerate}

\begin{lemma}\label{Pprop}
In this lemma $\K=\R$ or $\C$, and $X$, $Y$ and  $X_j$'s are linear spaces over $\K$.

The following statements hold true.
\begin{enumerate}[label={\tnum}]
 \item \label{P1} If $p_j\in\mathscr P(k,X_j)$ for $1\le j\le m$, where $m\ge 1$ and $L$ is an $m$-linear mapping from $\prod_{j=1}^m X_j$ to $X$, then
 $L(p_1,p_2,\ldots,p_m)\in\mathscr P(k,X)$. 
 \item \label{P2} If $p\in\mathscr P(k,X)$ and $L:X\to Y$ is a linear mapping, then $Lp\in\mathscr P(k,Y)$.
 \item \label{P3} If $p\in\mathscr P(k,\K^n)$ and $1\le j\le n$, then  the canonical projection $\pi_j p$, that maps $p$ to its $j$-th component, belongs to $\mathscr P(k,\K)$. 
 \item \label{P4} If $p\in \mathscr P(k,\K)$ and $q\in \mathscr P(k,X)$, then the product $p q\in \mathscr P(k,X)$.
 
 Consequently, if $p_j\in \mathscr P(k,\K)$ for $1\le j\le m$, then $p_1 p_2\ldots p_m\in \mathscr P(k,\K)$.
 
 \item \label{P5} If $p\in\mathscr P(k,\K^n)$ and $q$ is a polynomial from $\K^n$ to $X$, then the composition $q\circ p$ belongs to $\mathscr P(k,X)$. 
 \item \label{P6} In case $X$ is a normed space, if $p\in \mathscr P(k,X)$, then so is each partial derivative $\partial p(z)/\partial z_j$, for $z=(z_{-1},z_0,\ldots,z_k)\in(0,\infty)^{k+2}$ and $-1\le j\le k$.
\end{enumerate} 
\end{lemma}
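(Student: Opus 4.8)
The plan is to prove each of the six statements by reducing to the basic structure of elements of $\mathscr P(k,X)$, namely finite sums of terms $z^\alpha\xi_\alpha$ with $\alpha\in\K^{k+2}$ and $\xi_\alpha\in X$. The guiding observation is the elementary identity $z^\alpha z^\beta=z^{\alpha+\beta}$ for $z\in(0,\infty)^{k+2}$, which follows componentwise from the power-function identities recalled in the paper. Everything else is bookkeeping over finite index sets, using that each $\mathscr P(k,X)$ is a linear space (observation \ref{Cb}).

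For \ref{P1}, I would write $p_j(z)=\sum_{\alpha\in S_j} z^\alpha \xi^{(j)}_\alpha$ and expand $L(p_1,\ldots,p_m)(z)$ by multilinearity into a finite sum over $(\alpha^{(1)},\ldots,\alpha^{(m)})\in S_1\times\cdots\times S_m$ of terms $\bigl(\prod_j z^{\alpha^{(j)}}\bigr)L(\xi^{(1)}_{\alpha^{(1)}},\ldots,\xi^{(m)}_{\alpha^{(m)}})= z^{\alpha^{(1)}+\cdots+\alpha^{(m)}} L(\xi^{(1)}_{\alpha^{(1)}},\ldots,\xi^{(m)}_{\alpha^{(m)}})$; collecting terms with equal exponent sums gives an element of $\mathscr P(k,X)$. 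Statement \ref{P2} is the case of this argument with $m=1$ and $L$ linear (or a direct one-line check: $Lp(z)=\sum_\alpha z^\alpha L\xi_\alpha$). Statement \ref{P3} is \ref{P2} applied to the coordinate projection $\pi_j:\K^n\to\K$, which is linear. Statement \ref{P4} is \ref{P1} with $m=2$ and the bilinear scalar-multiplication map $\K\times X\to X$, $(c,\xi)\mapsto c\xi$; the consequence for $p_1\cdots p_m$ follows by induction on $m$ using $\K$ as the space $X$.

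For \ref{P5}, write $p=(p_1,\ldots,p_n)$ with each $p_i\in\mathscr P(k,\K)$ by \ref{P3}, and expand the polynomial $q:\K^n\to X$ as a finite $\K$-linear combination of monomial maps $z\mapsto z_1^{d_1}\cdots z_n^{d_n}\,\eta$ with $\eta\in X$. Then $q\circ p$ is a finite linear combination of products $p_1^{d_1}\cdots p_n^{d_n}\,\eta$; by the consequence in \ref{P4} each product $p_1^{d_1}\cdots p_n^{d_n}$ lies in $\mathscr P(k,\K)$, and multiplying by the fixed $\eta\in X$ and using \ref{P4} once more (or the trivial map $c\mapsto c\eta$) and linearity yields $q\circ p\in\mathscr P(k,X)$. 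Finally, for \ref{P6}, it suffices by linearity to differentiate a single term $z^\alpha\xi_\alpha=\bigl(\prod_{\ell=-1}^k z_\ell^{\alpha_\ell}\bigr)\xi_\alpha$; since $\partial(z_j^{\alpha_j})/\partial z_j=\alpha_j z_j^{\alpha_j-1}$ by the recalled derivative rule $\frac{d}{dt}t^z=zt^{z-1}$, we get $\partial(z^\alpha\xi_\alpha)/\partial z_j= z^{\alpha-e_j}(\alpha_j\xi_\alpha)$ where $e_j$ is the $j$-th standard basis vector of $\K^{k+2}$, which is again of the required form (with coefficient $\alpha_j\xi_\alpha\in X$, noting $\alpha_j\in\K$).

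I do not expect any genuine obstacle here; the proof is entirely formal. The one point requiring a little care is \ref{P5}: one must handle the multivariable exponents $(d_1,\ldots,d_n)$ correctly and make sure that composing the scalar-valued $p_i$'s into $q$ stays inside the finite-index, finite-sum framework — this is where one genuinely uses \ref{P3} (to get scalar components) together with the multiplicative closure from \ref{P4}. A secondary, purely notational caveat is the possible collision of exponents when collecting terms in \ref{P1} and \ref{P4}: distinct tuples $(\alpha^{(1)},\ldots,\alpha^{(m)})$ may produce the same sum $\alpha^{(1)}+\cdots+\alpha^{(m)}$, but since the index sets are finite, one simply groups them and adds the corresponding coefficients in $X$, so closure is preserved.
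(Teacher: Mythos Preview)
Your proof is correct and is the natural direct verification one would expect. The paper does not actually reprove this lemma here; it simply cites \cite[Lemma 2.8]{CaH3} for the case $\K=\R$ and notes that the same proof works for $\K=\C$, so your argument is presumably in line with that reference.
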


Lemma \ref{Pprop} is stated and proved in \cite[Lemma 2.8]{CaH3} for $\K=\R$. However, it is equally true for $\K=\C$ with the same proof.

\subsection{Types of asymptotic expansions}

First, we give a definition for the asymptotic expansions which have the exponential functions as the main decaying modes.

\begin{definition}\label{EEdef} 
 Let $(X,\|\cdot\|_X)$ be a normed space over $\C$, and $g$ be a function from $(T,\infty)$ to $X$ for some $T\in\R$.

\begin{enumerate}[label=\tnum] 
 \item\label{EE1} Let $(\gamma_k)_{k=1}^\infty$ be a divergent, strictly increasing sequence of nonnegative numbers.
We say
  \beq \label{expan}
  g(t) \sim \sum_{k=1}^\infty g_k(t),   \text{ where $g_k\in\mathcal F_E(-\gamma_k,X)$   for $k\in\N$, }
  \eeq
 if for any $N\ge 1$, there exists $\mu>\gamma_N$ such that
  \beqs
  \left\|g(t)-\sum_{k=1}^N g_k(t)\right\|_X=\bigo(e^{-\mu t}).  
  \eeqs
 
 \item\label{EE2} Let $N\in\N$, and $(\gamma_k)_{k=1}^N$ be nonnegative and strictly increasing.
We say
  \beq\label{finex}
g(t) \sim \sum_{k=1}^N g_k(t), \text{ where $g_k\in\mathcal F_E(-\gamma_k,X)$  for $1\le k\le N$,  }
\eeq
if
\beqs
\left \|g(t) - \sum_{k=1}^N g_k(t)\right \|_X=\bigo(e^{-\mu t})\quad \text{ for any }\mu>0.
\eeqs
\end{enumerate}
\end{definition}
  
In case $X$ is a finite dimensional normed space, all norms on $X$ are equivalent. Hence, the above definitions of \eqref{expan} and \eqref{finex} are independent of the particular norm $\|\cdot\|_X$. 

Thanks to either \cite[Lemma 2.3]{HTi1} or Lemma  \ref{uniE},  see also  \cite[page 195]{Minea}, the asymptotic expansion \eqref{expan} for a function $g(t)$ is unique.

\begin{remark}\label{equivrmk}
The meaning of the asymptotic expansions \eqref{expan} and \eqref{finex}  can be seen more clearly when they are stated with the use of the equivalence form in \eqref{ghequiv} for $g_k(t)$. 
For example, an equivalence of \eqref{expan} is the following.

\textit{We say
  \beq \label{expan2}
  g(t) \sim \sum_{k=1}^\infty \widehat g_k(t) e^{-\gamma_k t}, \text{  where $\widehat g_k\in\mathcal F_E(0,X)$ for $k\in\N$,  }
  \eeq
if for any $N\ge 1$, there exists $\mu>\gamma_N$ such that
  \beq\label{gdrem2}
  \left\|g(t)-\sum_{k=1}^N \widehat g_k(t) e^{-\gamma_k t}\right\|_X=\bigo(e^{-\mu t}).  
  \eeq
}

Note that the function $\widehat g_k(t) $ in \eqref{gdrem2} does not contribute  any extra exponential decay to the decaying mode $e^{-\gamma_k t}$.
\end{remark}

Next, we define the asymptotic expansions in which the power or logarithmic or iterated logarithmic functions are the main decaying modes.

\begin{definition}\label{Lexpand}
Let  $(X,\|\cdot\|_X)$ be a normed space over $\C$. Suppose $g$ is a function from $(T,\infty)$ to $X$ for some $T\in\R$, and $m_*\in \Z_+$. 

\begin{enumerate}[label={\tnum}]
 \item\label{LE1} Let $(\gamma_k)_{k=1}^\infty$ be a divergent, strictly increasing sequence of positive numbers, and $(n_k)_{k=1}^\infty$ be a sequence in $\N\cap[m_*,\infty)$. 
We say
\beq\label{fiter}
g(t)\sim \sum_{k=1}^\infty g_k(t), \text{ where $g_k\in \mathscr F_{m_*}(n_k,-\gamma_k,X)$ for $k\in\N$, }
\eeq
if, for each $N\in\N$, there is some $\mu>\gamma_N$ such that
\beqs
\left\|g(t) - \sum_{k=1}^N g_k(t)\right\|_X=\bigo(\iln_{m_*}(t)^{-\mu}).
\eeqs

\item Let $N\in\N$, $(\gamma_k)_{k=1}^N$ be positive and strictly increasing, and  $n_*\in\N\cap[m_*,\infty)$.
We say
\beq\label{expan4}
g(t)\sim \sum_{k=1}^N g_k(t), \text{ where $g_k\in \mathscr F_{m_*}(n_*,-\gamma_k,X)$ for $1\le k\le N$, }
\eeq
if it holds for all $\mu>0$ that
\beqs 
\left\|g(t) - \sum_{k=1}^N g_k(t)\right\|_X=\bigo(\iln_{m_*}(t)^{-\mu}).
\eeqs
\end{enumerate}
\end{definition}

Similar to Remark \ref{equivrmk}, by using the equivalence \eqref{qpequiv}, we have the following equivalent form of \eqref{fiter}
\beq\label{expan3}
g(t)\sim \sum_{k=1}^\infty \widehat g_k(\widehat\LL_{n_k}(t))\iln_{m_*}(t)^{-\gamma_k}, \text{ where $\widehat g_k\in \mathscr P_{m_*}(n_k,0,X)$ for $k\in\N$. }
\eeq

For example, when $m_*=0$ the asymptotic expansion \eqref{expan3} reads as
\beqs
g(t)\sim \sum_{k=1}^\infty \widehat g_k(\widehat\LL_{n_k}(t)) t^{-\gamma_k},
\text{ where $\widehat g_k\in \mathscr P_{0}(n_k,0,X)$ for $k\in\N$. }
\eeqs

Same equivalence also applies to \eqref{expan4}.

\section{The main assumptions}\label{assumsec}

Let $n\in \N$ be fixed throughout the paper.
Consider the following system of nonlinear ODEs in $\C^n$: 
\beq \label{sys-eq}
y'=-Ay +G(y)+f(t),
\eeq 
where $A$ is an $n\times n$ constant matrix of complex numbers, $G$ is a vector field on $\C^n$, and  $f$ is a function from $(0,\infty)$ to $\C^n$. 
 
  \medskip
 \emph{The following Assumptions \ref{assumpA} and \ref{assumpG} will be imposed throughout Sections \ref{assumsec}--\ref{lforce}.} 

\begin{assumption}\label{assumpA}
All eigenvalues of the matrix $A$ have positive real parts.
 \end{assumption}
 
This assumption is as general as \cite{Minea}, and more general than \cite{Shi2000}. It is very often used to prove the asymptotic stability of an equilibrium.
 
\begin{assumption}\label{assumpG}  Function $G:\C^n\to\C^n$ has the the following properties. 
\begin{enumerate}[label=\tnum]
 \item\label{aG1}  $G$ is locally Lipschitz.
 \item\label{aG2} There exist functions $G_m:\C^n\to\C^n$,  for $m\ge 2$, each is a homogeneous polynomial of degree $m$, such that, for any $N\ge 2$, there exists $\delta >0$ so that
 \beq\label{Grem1}
 \left|G(x)-\sum_{m=2}^N G_m(x)\right|=\bigo(|x|^{N+\delta})\text{ as } x\to 0.
 \eeq
 \end{enumerate}
\end{assumption}

For the sake of brevity, we write Assumption \ref{assumpG}\ref{aG2} as
\beq \label{Gex}
G(x)\sim \sum_{m=2}^\infty G_m(x)\text{ as }x\to 0.
\eeq 

Obviously, any  $C^\infty$-function $G$ with $G(0)=0$ and derivative matrix $DG(0)=0$ satisfies Assumption \ref{assumpG}.  However,  the function $G$, in general,  is not required to analytic in a neighborhood of the origin.

We examine Assumptions \ref{assumpA} and \ref{assumpG} more now.

\subsection{The linear part}
Denote by $\Lambda_k$, for $1\le k\le n$, the eigenvalues of $A$ counting the multiplicities. The spectrum of $A$ is
$$\sigma(A)=\{\Lambda_k:1\le k\le n\}\subset \C.$$

Thanks to Assumption \ref{assumpA},  $\Re\Lambda_j>0$ for all $j$.
We order the set $\Re\sigma(A)$ by strictly increasing numbers $\lambda_j$'s, with $1\le j\le d$ for some $d\le n$.
Of course,
\beqs
0<\lambda_1\le\Re\Lambda_k\le \lambda_d \quad\text{ for } k=1,2,\ldots,n.
\eeqs

It follows that for any $\varep>0$,  there exists a positive constant $c_\varep$ such that
\beq\label{eAe}
\left|e^{-tA}\right|\le c_\varep e^{-(\lambda_1-\varep)t}\text{ for all } t\ge 0.
\eeq

In particular, there is $C_0>0$ such that
\beq\label{eA2}
\left |e^{-tA}\right|\le C_0 e^{-\lambda_1 t/2}\quad \text{ for all } t\ge 0.
\eeq

\subsection{The nonlinear part}
Consider condition \eqref{Gex}. For each $m\ge 2$,  there exists an $m$-linear mapping $\mathcal G_m$ from $(\C^n)^m$ to $\C^n$ such that
\beq\label{GG} G_m(x)=\mathcal G_m(x,x,\ldots,x)\text{ for } x\in\C^n.\eeq

By \eqref{multiL}, one has, for any $x_1,x_2,\ldots,x_m\in\C^n$, that
\beq\label{multineq}
|\mathcal G_m(x_1,x_2,\ldots,x_m)|\le \|\mathcal G_m\|\cdot |x_1|\cdot |x_2|\cdots |x_m|.
\eeq

In particular,
\beq\label{multy}
|G_m(x)|\le \|\mathcal G_m\|\cdot |x|^m\quad\text{ for all } x\in\C^n.
\eeq

It follows \eqref{Grem1}, when $N=2$, and \eqref{multy}, for $m=2$, that $|G(x)|=\bigo(|x|^2)$ as $x\to 0$. Thus, there exist positive numbers $c_*$ and $r_*$ such that 
\beq \label{Gyy}
|G(x)|\le c_*|x|^2 \quad\forall x\in\C^n \text{ with } |x|<r_*.
\eeq 

\section{Asymptotic approximations for solutions of the linear system}\label{tools}

This section is focused on the linearization of system \eqref{sys-eq} around the origin.
The linearized system contains a forcing function which consists of two parts: the coherently decaying part $f$ and the faster decaying part $g$. The task is to approximate its solutions by the coherently decaying solutions of the same linear system but with $f$ only.

\subsection{Forcing function with exponential decay}
We recall an useful, elementary integral formula. 
If $B$ is an invertible $k\times k$ matrix of complex numbers, and $p:\R\to\C^k$ is a $\C^k$-valued polynomial, then integration by parts repeatedly yields
\beq \label{intep}
\int e^{tB}p(t)\d t=\sum_{k=0}^{\deg(p)} (-1)^k e^{tB} B^{-k-1}\frac{\d^k p(t)}{\d t^k}+C,
\eeq 
where $C$ is any constant vector in $\C^k$.

 \begin{theorem}\label{approx1}
  Given $\mu>0$, $f\in\mathcal F_E(-\mu,\C^n)$ and a function $g\in C([T,\infty),\C^n)$, for some $T\ge 0$, that satisfies 
  \beq\label{gO} 
  g(t)=\bigo(e^{-(\mu+\delta)t}) \text{ for some }\delta>0.
  \eeq
  
  Assume  $y\in C([T,\infty),\C^n)$ is a solution of
  \beq\label{ayeq}
  y'(t)+Ay(t)=f(t)+g(t),\quad\text{ for } t>T,
  \eeq
  and it holds for any $\lambda\in\Re\sigma(A)$ with $\lambda<\mu$ and any number $m\in\N$ that
  \beq\label{ylimcond}
  \lim_{t\to\infty} t^m e^{\lambda t}|y(t)|=0.
  \eeq
  
  Then there exists a function  $z\in \mathcal F_E(-\mu,\C^n)$ and a number $\varep>0$ such that
  \beq\label{zeq}
  z'(t)+Az(t)=f(t) \quad\text{ for } t\in \R,
  \eeq
  and 
  \beq \label{ymz}
  |y(t)-z(t)|=\bigo(e^{-(\mu+\varep)t}).
  \eeq 
 \end{theorem}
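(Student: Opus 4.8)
\emph{Approach.} The plan is to follow the Foias--Saut strategy: first to construct a particular solution $z_0\in\mathcal F_E(-\mu,\C^n)$ of the linear equation $z_0'+Az_0=f$ on $\R$; then to analyze the remainder $w:=y-z_0$, which solves $w'+Aw=g$ on $(T,\infty)$ with $g$ decaying strictly faster than $e^{-\mu t}$; and finally to show that $w$ itself decays like $e^{-(\mu+\varep)t}$ after one harmless correction of $z_0$. To build $z_0$, I would write $f(t)=\sum_{\lambda\in S}p_\lambda(t)e^{\lambda t}$ with $S\subset\C$ finite, $\Re\lambda=-\mu$ for all $\lambda\in S$, and $p_\lambda$ polynomials, and seek $z_0(t)=\sum_{\lambda\in S}q_\lambda(t)e^{\lambda t}$; matching the coefficients of $e^{\lambda t}$ turns $z_0'+Az_0=f$ into the polynomial identities $q_\lambda'+(\lambda I+A)q_\lambda=p_\lambda$, one for each $\lambda\in S$. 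When $\lambda I+A$ is invertible, that is $-\lambda\notin\sigma(A)$, formula \eqref{intep} yields a polynomial $q_\lambda$ directly. In the resonant case $-\lambda\in\sigma(A)$ (which forces $\mu\in\Re\sigma(A)$), I would decompose $\C^n$, and $p_\lambda$ along with it, into the generalized kernel of $\lambda I+A$, on which that matrix acts nilpotently so that $e^{\pm s(\lambda I+A)}$ are matrix polynomials in $s$ and $q_\lambda(t)=e^{-t(\lambda I+A)}\int_0^t e^{s(\lambda I+A)}p_\lambda(s)\,\d s$ is a polynomial, and its complementary invariant subspace, on which $\lambda I+A$ is invertible and \eqref{intep} applies again; summing the two pieces produces a polynomial $q_\lambda$. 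The resulting $z_0$ lies in $\mathcal F_E(-\mu,\C^n)$ and solves \eqref{zeq} on all of $\R$.

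\emph{Transferring the decay; the fast modes.} Next, with $w=y-z_0$ solving $w'+Aw=g$ on $(T,\infty)$, I would note that $z_0\in\mathcal F_E(-\mu,\C^n)$ gives $|z_0(t)|=\bigo(e^{(-\mu+\eta)t})$ for every $\eta>0$ by \eqref{fEorder}, so hypothesis \eqref{ylimcond} passes to $w$: for every $\lambda\in\Re\sigma(A)$ with $\lambda<\mu$ and every $m$, $t^m e^{\lambda t}|w(t)|\to0$. Let $P_{<}$, $P_{=}$, $P_{>}$ be the spectral projections of $A$ onto the sums of its generalized eigenspaces whose eigenvalues have real part $<\mu$, $=\mu$, $>\mu$ (the middle one being $0$ unless $\mu\in\Re\sigma(A)$); each commutes with $A$, so $P_{<}w$, $P_{=}w$, $P_{>}w$ solve reduced linear equations forced respectively by $P_{<}g$, $P_{=}g$, $P_{>}g$, for restrictions $A_{<}$, $A_{=}$, $A_{>}$ of $A$ whose spectra have real parts $<\mu$, $=\mu$, $>\mu$. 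For the fast block, $A_{>}$ has eigenvalues with real part $\ge\lambda^{\sharp}$ for some $\lambda^{\sharp}>\mu$; variation of parameters forward from a fixed $t_0>T$, the bound $|e^{-\tau A_{>}}|\le C_\varep e^{-(\lambda^{\sharp}-\varep)\tau}$, and a routine estimate of the convolution of $e^{-(\mu+\delta)s}$ against $e^{-(\lambda^{\sharp}-\varep)(t-s)}$ give $P_{>}w(t)=\bigo(e^{-(\mu+\varep_1)t})$ for some $\varep_1>0$.

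\emph{The slow and resonant modes.} For the slow block, since $A_{<}$ has eigenvalues with real part in $(0,\mu)$, the backward integral $\hat w(t):=-\int_t^{\infty}e^{-(t-s)A_{<}}P_{<}g(s)\,\d s$ converges, solves the same reduced equation as $P_{<}w$, and satisfies $\hat w(t)=\bigo(e^{-(\mu+\delta)t})$; hence $P_{<}w-\hat w$ solves $u'+A_{<}u=0$ and so equals $\sum_{\nu}r_\nu(t)e^{-\nu t}$, with $\nu$ ranging over the eigenvalues of $A_{<}$ and $r_\nu$ polynomials. If some $r_\nu\ne0$, I would pick $\nu_0$ with $r_{\nu_0}\ne0$ and $\Re\nu_0$ minimal; then $\Re\nu_0\in\Re\sigma(A)$ and $\Re\nu_0<\mu$, so the transferred \eqref{ylimcond} and $\hat w(t)=\bigo(e^{-(\mu+\delta)t})$ force $e^{(\Re\nu_0)t}\bigl(P_{<}w-\hat w\bigr)(t)\to0$; in that limit the terms with $\Re\nu>\Re\nu_0$ vanish while those with $\Re\nu=\Re\nu_0$ combine into a nonzero element of $\mathcal F_E(0,\C^n)$, which cannot tend to $0$ by the uniqueness Lemma \ref{uniE} --- a contradiction. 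Thus $P_{<}w=\hat w=\bigo(e^{-(\mu+\delta)t})$. For the resonant block (present only when $\mu\in\Re\sigma(A)$), the analogous backward integral $\tilde w(t):=-\int_t^{\infty}e^{-(t-s)A_{=}}P_{=}g(s)\,\d s$ still converges, since the $e^{-\delta s}$ gain in $g$ beats the $e^{\mu(s-t)}$ growth of $e^{(s-t)A_{=}}$, it solves the corresponding reduced equation, and $\tilde w(t)=\bigo(e^{-(\mu+\delta)t})$; then $P_{=}w-\tilde w$ solves $u'+A_{=}u=0$ and all of its exponents have real part $-\mu$, so $P_{=}w-\tilde w\in\mathcal F_E(-\mu,\C^n)$. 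Here \eqref{ylimcond} gives no information (the rate $\mu$ is excluded), but this is harmless: setting $z:=z_0+\bigl(P_{=}w-\tilde w\bigr)$ keeps $z\in\mathcal F_E(-\mu,\C^n)$ and preserves \eqref{zeq}, while $y-z=P_{<}w+P_{>}w+\tilde w=\bigo(e^{-(\mu+\varep)t})$ with $\varep:=\min\{\varep_1,\delta\}>0$, which is \eqref{ymz}.

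\emph{Where the difficulty lies.} The variation-of-parameters bookkeeping with the exponential bounds \eqref{eAe} will be routine. The substantive points will be: (a) recognizing that the component of $w$ along eigenvalues of real part exactly $\mu$ is genuinely not controlled by \eqref{ylimcond} and must be absorbed into $z$ rather than estimated away --- this is why the conclusion asserts the existence of some $z$ rather than a canonical one; and (b) killing the slow homogeneous component, the one place where \eqref{ylimcond} is combined with the uniqueness of exponential polynomials (Lemma \ref{uniE}) to rule out a surviving oscillating-polynomial term at some rate $e^{-\lambda_j t}$ with $\lambda_j<\mu$.
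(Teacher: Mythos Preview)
Your argument is correct and follows essentially the same strategy as the paper's proof: split according to whether the real parts of eigenvalues of $A$ lie above, at, or below $\mu$, use forward variation of constants on the fast block, backward integrals on the slow and resonant blocks, invoke \eqref{ylimcond} to kill the slow homogeneous piece, and absorb the resonant homogeneous piece into $z$.

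The organization differs slightly. The paper works Jordan block by Jordan block and constructs $z$ directly in each of the three cases via explicit integral formulas (forward from $-\infty$ in Case~1, from $0$ with an absorbed constant $Y_0$ in Case~2, backward from $\infty$ in Case~3). You instead first build a global particular solution $z_0\in\mathcal F_E(-\mu,\C^n)$ of $z_0'+Az_0=f$ and only then spectrally decompose the remainder $w=y-z_0$; the paper's explicit formulas \eqref{zsum1}, \eqref{zeq2} are replaced by your abstract invertibility/nilpotency argument for $q_\lambda'+(\lambda I+A)q_\lambda=p_\lambda$. Your use of Lemma~\ref{uniE} to rule out a surviving slow homogeneous term is a clean substitute for the paper's direct computation that $e^{sB}y(s)\to 0$ as $s\to\infty$ in Case~3. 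Both routes yield the same conclusion with comparable effort; yours separates the ``particular solution'' and ``remainder'' steps more visibly, while the paper's gives more explicit formulas for $z$.
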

\begin{proof}
Note from equation \eqref{ayeq} and the stated assumptions that, in fact, $y\in C^1([T,\infty),\C^n)$. 

Observe that the function $t\mapsto f(T+t)$ belongs to $\mathcal F_E(-\mu,\C^n)$ and the  function $t\mapsto g(T+t)$ is of $\bigo(e^{-(\mu+\delta)t}) $.
Moreover, as a consequence of \eqref{ylimcond},
\beq\label{ylimp}
  \lim_{t\to\infty} p(t)e^{\lambda_j t}|y(t)|=0\text{ for any polynomial $p:\R\to \R$.}
  \eeq

Let $m\in\N$ and $\tau=T+t$.  We write 
\beq\label{yshift}
 t^m e^{\lambda_j t}|y(T+t)|= e^{-\lambda_j T}(\tau-T)^m e^{\lambda_j \tau}|y(\tau)|.
\eeq

Thanks to \eqref{ylimp}, the right-hand side of \eqref{yshift} goes to zero as $\tau\to\infty$. Thus, 
$$\lim_{t\to\infty} t^m e^{\lambda_j t}|y(T+t)| = 0.$$ 

Therefore, we can translate the time variable and assume that $T=0$.

By projecting equation \eqref{ayeq} to the invariant subspaces  corresponding to the Jordan normal form of the matrix $A$, we can reduce the problem to the following
\beqs
y'(t)+By(t)=f(t)+g(t), \text{ for }t>0,
\eeqs
where $y(t)\in\C^{n'}$, $B=B_j$ is a Jordan matrix of size $n'\times n'$, for some $n'\le n$, corresponding to an eigenvalue $\Lambda_j$ of $A$, $f\in\mathcal F_E(-\mu,\C^{n'})$ and $g\in C([0,\infty),\C^{n'})$ satisfies \eqref{gO}.

Assume
\beq \label{ftsum}
f(t)=\sum_{\Re\lambda=\mu} f_\lambda(t)\text{ with } f_\lambda(t)=p_\lambda(t)e^{-\lambda t},
\eeq 
where each $p_\lambda$ is a polynomial from $\R$ to $\C^{n'}$.
In \eqref{ftsum} and throughout this proof, the sum $\sum_{\Re\lambda=\mu}$ is understood to be over finitely many $\lambda$'s.

It is known that 
\beq\label{etB}
e^{-tB}=e^{-\Lambda_j t}p(t),
\eeq
where $p(t)$ is an $n'\times n'$ matrix-valued polynomial in $t\in\R$.
Consequently, for any $\varep>0$, then, there exist positive constants $\widehat C_\varep$ and $\widetilde C_\varep$ such that one has, for all $t\ge 0$,
\begin{align}
\label{eBp}
|e^{tB}|&\le \widehat C_\varep e^{(\Re\Lambda_j+\varep)t},\\
\label{eBm}
 |e^{-tB}|&\le \widetilde C_\varep e^{-(\Re\Lambda_j-\varep)t}.
\end{align}

By \eqref{gO} and the continuity of $g$ on $[0,\infty)$, we can assume that  there is a constant $C_1>0$ such that
\beq\label{gexb}
|g(t)|\le C_1e^{-(\mu+\delta)t} \text{ for all $t\ge 0$.}
\eeq

\medskip
\noindent\textbf{Case 1: $\Re\Lambda_j>\mu$.} 
For $t\in\R$, let 
\beq\label{zdef1}
z(t)=\int_{-\infty}^t e^{-(t-\tau)B}f(\tau)\d\tau. 
\eeq

Then $z(t)$ solves equation \eqref{zeq} with $A:=B$.
Clearly, the difference $u\eqdef y-z$ satisfies the linear equation
\beqs
u'(t)+Bu(t)=g(t),\text{ for } t>0.
\eeqs

Hence, we have
\beq\label{yz}
y(t)-z(t)=u(t)=e^{-tB}u(0)+\int_0^t e^{-(t-\tau)B}g(\tau)\d\tau.
\eeq

Fix a number $\varep$ such that
\beqs
0<\varep<\min\left\{\delta,\frac{\Re \Lambda_j-\mu}{2}\right\}.
\eeqs

Note with this choice that $\Re \Lambda_j-\varep > \mu+\varep$. 
Using \eqref{eBm} to estimate the first exponential term on the right-hand side of \eqref{yz} gives 
\beq\label{eBu}
|e^{-tB}u(0)|\le \widetilde C_\varep e^{-(\Re\Lambda_j-\varep)t}|u(0)|\le \widetilde C_\varep e^{-(\mu+\varep)t}|u(0)|.
\eeq

Similarly, thanks to \eqref{eBm} and \eqref{gexb}, the integral in \eqref{yz}  is bounded by
\beqs
\left|\int_0^t e^{-(t-\tau)B}g(\tau)\d\tau\right|
\le \widetilde C_\varep C_1 \int_0^t e^{-(\mu+\varep)(t-\tau)} e^{-(\mu+\delta)\tau}\d\tau
=\widetilde C_\varep C_1  \frac{e^{-(\mu+\varep)t}}{\delta-\varep} (1-e^{-(\delta-\varep)t}).
\eeqs

Because $\delta>\varep$, we deduce
\beq\label{ieBg}
\left|\int_0^t e^{-(t-\tau)B}g(\tau)\d\tau\right|
\le \frac{\widetilde C_\varep C_1}{\delta-\varep} e^{-(\mu+\varep)t}.
\eeq

Thus, we obtain \eqref{ymz} from formula \eqref{yz} and estimates \eqref{eBu}, \eqref{ieBg}. It remains to prove $z\in \mathcal F_E(-\mu)$.
We rewrite \eqref{zdef1} as
\beqs
z(t)=\sum_{\Re\lambda=\mu} z_\lambda(t),
\eeqs 
 where 
\beqs
z_\lambda(t)
=\int_{-\infty}^t e^{-(t-\tau)B}f_\lambda(\tau)\d\tau
=e^{-t B} \int_{-\infty}^t e^{\tau(B-\lambda I_{n'})}p_\lambda(\tau)\d\tau. 
\eeqs

Applying formula \eqref{intep}, we can compute  
\beq\label{zeq1}
\begin{aligned}
z_\lambda(t)=e^{-t B}
&\left\{\sum_{k=0}^{\deg(p_\lambda)} (-1)^k e^{t(B-\lambda I_{n'})} (B-\lambda I_{n'})^{-k-1} \frac{\d^k p_\lambda(t)}{\d t^k}\right.\\
&\quad \left.-\lim_{s\to -\infty} \sum_{k=0}^{\deg(p_\lambda)} (-1)^k e^{s(B-\lambda I_{n'})} (B-\lambda I_{n'})^{-k-1} \frac{\d^k p_\lambda(s)}{\d s^k}\right\} .
\end{aligned}
\eeq 

Because $B-\lambda I_{n'}$ has the sole eigenvalue $\Lambda_j-\lambda$, and
$$\Re(\Lambda_j-\lambda)=\Re\Lambda_j-\mu>0,$$
 the norm $|e^{s(B-\lambda I_{n'})}|$ decays exponentially as $s\to-\infty$.
Meanwhile, each $\d^k p_\lambda(s)/\d s^k$ is a polynomial. Therefore, the limit as $s\to-\infty$ in \eqref{zeq1} is zero, and, consequently, 
\beq\label{zsum1}
z_\lambda(t)=e^{-\lambda t}\sum_{k=0}^{\deg(p_\lambda)} (-1)^k (B-\lambda I_{n'})^{-k-1}\frac{\d^k p_\lambda(t)}{\d t^k}.
\eeq

Thanks to the facts  $\Re\lambda=\mu$ and each $\d^k p_\lambda(t)/\d t^k$ in \eqref{zsum1}  is a $\C^{n'}$-valued polynomial, one obtains $z_\lambda\in \mathcal F_E(-\mu,\C^{n'})$ for each $\lambda$. Hence, 
$z\in \mathcal F_E(-\mu,\C^{n'})$.

\medskip
\noindent\textbf{Case 2: $\Re\Lambda_j=\mu$.} 
Denote $y_0=y(0)$.
By the variation of constants formula, we can write solution $y(t)$ as 
\beq\label{yy1}
y(t)=e^{-tB}y_0+\int_0^t e^{-(t-\tau)B}f(\tau)\d\tau+ e^{-tB}\int_0^\infty  e^{\tau B}g(\tau) \d\tau-\int_t^\infty  e^{-(t-\tau) B}g(\tau))\d\tau. 
\eeq

Let $\varep=\delta/2$. By using inequalities  \eqref{eBp} and \eqref{gexb}, we have 
\beqs
|e^{t B}g(t)|\le \widehat C_\varep C_1 e^{(\mu+\delta/2)t} e^{-(\mu+\delta)t}=\widehat C_\varep C_1 e^{-\delta/2 t} \quad\forall t\ge 0.
\eeqs

Thus, by defining $Y_0=\int_0^\infty  e^{\tau B}g(\tau) \d\tau$, one has  $Y_0$  is a vector in $\C^{n'}$.

For $t\in\R$, let 
\beq \label{zdef2}
z(t)=e^{-tB } (y_0+Y_0)+\int_0^t e^{-(t-\tau)B}f(\tau)\d\tau=e^{-tB } \xi+J(t),
\eeq 
where $\xi=y_0+Y_0$ and $J(t)=\int_0^t e^{-(t-\tau)B}f(\tau)\d\tau$.

We rewrite $y(t)$ from \eqref{yy1} as
\beq
y(t)=z(t)-\int_t^\infty  e^{(\tau-t) B}g(\tau))\d\tau ,\label{yy}
\eeq

Clearly, the function $z(t)$ satisfies equation \eqref{zeq} with $A:=B$. 
For $t>0$, one has, thanks to \eqref{yy} and inequalities \eqref{eBp}, \eqref{gexb}, 
\beqs
 | y(t)-z(t) |=\left |\int_t^\infty  e^{(\tau-t) B}g(\tau))\d\tau\right|
\le \widehat C_\varep C_1 \int_t^\infty e^{(\mu+\delta/2)(\tau-t)} e^{-(\mu+\delta)\tau}\d\tau
=\frac{2\widehat C_\varep C_1}{\delta}e^{-(\mu+\delta)t} .
\eeqs

Therefore, we obtain estimate \eqref{ymz}. 

We  prove  $z\in \mathcal F_E(-\mu,\C^{n'})$ now.
Thanks to formula \eqref{etB} and the fact  $\Re\Lambda_j=\mu$, one has 
\beq \label{term1}
\text{  the term  $e^{-tB } \xi$  in \eqref{zdef2} belongs to $\mathcal F_E(-\mu,\C^{n'})$.}
\eeq 

It remains to calculate the remaining term $J(t)$ in \eqref{zdef2}.
With $f(t)$ given in \eqref{ftsum}, we have
\beq\label{Jdef} 
J(t)=\sum_{\Re\lambda=\mu} J_\lambda(t),\text{ where } 
J_\lambda(t)=e^{-t B} \int_0^t e^{(B-\lambda I_{n'}) \tau}p_\lambda(\tau)\d\tau.
\eeq 

For each $J_\lambda(t)$, we consider the two cases $\lambda\neq \Lambda_j$ and $\lambda=\Lambda_j$ separately.

\medskip
\emph{Case 2a: $\lambda\neq \Lambda_j$.} Then $B-\lambda I_{n'}$ is invertible, and applying the integral formula \eqref{intep}  gives
\beq\label{zeq3}
\begin{aligned}
J_\lambda(t)=e^{-t B}&\left\{\sum_{k=0}^{\deg(p_\lambda)} (-1)^k e^{t(B-\lambda I_{n'})} (B-\lambda I_{n'})^{-k-1} \frac{\d^k p_\lambda(t)}{\d t^k}\right.\\
&\quad\left.-\sum_{k=0}^{\deg(p_\lambda)} (-1)^k (B-\lambda I_{n'})^{-k-1}\frac{\d^k p_\lambda(0)}{\d t^k}\right\} .
\end{aligned}
\eeq 

It follows that
\beq\label{zeq2}
\begin{aligned}
J_\lambda(t)&=e^{-\lambda t}\sum_{k=0}^{\deg(p_\lambda)} (-1)^k  (B-\lambda I_{n'})^{-k-1}\frac{\d^k p_\lambda(t)}{\d t^k}\\
&\quad  - e^{-t B}  \sum_{k=0}^{\deg(p_\lambda)} (-1)^k (B-\lambda I_{n'})^{-k-1}\frac{\d^k p_\lambda(0)}{\d t^k}.
\end{aligned}
\eeq 

The first term (including the first sum) on the right-hand side of  \eqref{zeq2} is the right-hand side of \eqref{zsum1}, hence, it belongs to $\mathcal F_E(-\mu,\C^{n'})$.
Same as \eqref{term1}, the second term  (including the second sum)  on the right-hand side of  \eqref{zeq2}  also belongs to $\mathcal F_E(-\mu,\C^{n'})$.
Therefore, 
\beq\label{J1} J_\lambda\in \mathcal F_E(-\mu,\C^{n'}).
\eeq 

\medskip
\emph{Case 2b: $\lambda=\Lambda_j$.} 
Note that $B-\lambda I_{n'}$ is a Jordan matrix having  zero as its only eigenvalue. 
Then $e^{(B-\lambda I_{n'}) \tau}$ is just a matrix-valued  polynomial in $\tau$. 
Hence, $\widetilde J_\lambda(t)\eqdef \int_0^t e^{(B-\lambda I_{n'}) \tau}p_\lambda(\tau)\d\tau$ is a polynomial in $t$. Combining this with equation  $J_\lambda(t)=e^{-tB}\widetilde J_\lambda(t)$ from \eqref{Jdef}, 
 formula \eqref{etB} for $e^{-tB}$, and the fact $\Re\Lambda_j=\mu$, we obtain \eqref{J1} again.

\medskip
By \eqref{zdef2}, \eqref{term1}, \eqref{Jdef},  and property \eqref{J1} for both cases 2a and 2b, we obtain $z\in \mathcal F_E(-\mu,\C^{n'})$.

\medskip
\noindent\textbf{Case 3: $\Re\Lambda_j<\mu$.} For any $s,t>T$, we have
\beq\label{yc3}
y(t)=e^{-(t-s)B}y(s)-e^{-tB}\int_t^s  e^{\tau B}(f(\tau)+g(\tau)) \d\tau.
\eeq 

Thanks to \eqref{etB} and condition  \eqref{ylimcond} applied to $\lambda=\Re\Lambda_j$, which belongs to $\Re\sigma(A)$ and is less than $\mu$, one has  $e^{sB}y(s)\to0$ as $s\to\infty$.
Then letting $s\to\infty$ in \eqref{yc3} gives
\beqs
y(t)=-e^{-tB}\int_t^\infty  e^{\tau B}(f(\tau)+g(\tau)) \d\tau.
\eeqs

For $t\in\R$, let 
\beq\label{zdef3}
z(t)=-e^{-tB}\int_t^\infty  e^{\tau B}f(\tau) \d\tau.
\eeq

It is obvious that $z(t)$ satisfies \eqref{zeq} with $A:=B$. 
For the difference between $y(t)$ and $z(t)$, we estimate
\beqs
|y(t)-z(t)|=\left | \int_t^\infty  e^{-(t-\tau) B}g(\tau) \d\tau\right|
\le \int_t^\infty | e^{(\tau-t) B}| |g(\tau)| \d\tau.
\eeqs

Let $\varep=\mu-\Re\Lambda_j>0$.
We apply inequality \eqref{eBp} to estimate  $| e^{(\tau-t) B}|$ and utilize estimate \eqref{gexb} of $|g(\tau)|$. It results in
\beqs
|y(t)-z(t)|\le \widehat C_\varep C_1 \int_t^\infty e^{\mu(\tau-t)}e^{-(\mu+\delta)\tau} \d\tau 
= \frac{\widehat C_\varep C_1}{\delta}e^{-(\mu+\delta)t} ,
\eeqs
which proves \eqref{ymz}.

We compute $z(t)$ now.
Again, rewrite  \eqref{zdef3} as
$z(t)=\sum_{\Re\lambda=\mu} z_\lambda(t)$,
where
\beqs
z_\lambda(t)=-e^{-tB}\int_t^\infty  e^{\tau B}f_\lambda(\tau) \d\tau
=-e^{-t B} \int_t^{\infty} e^{\tau(B-\lambda I_{n'})}p_\lambda(\tau)\d\tau. 
\eeqs

Since $B-\lambda I_{n'}$ is invertible and, similar to \eqref{zeq1} and \eqref{zeq3}, we have
\beq\label{zdef4}
\begin{aligned}
z_\lambda(t)=-e^{-t B}
& \left\{  \lim_{s\to \infty}\left( \sum_{k=0}^{\deg(p_\lambda)} (-1)^k e^{s(B-\lambda I_{n'})} (B-\lambda I_{n'})^{-k-1} \frac{\d^k p_\lambda(s)}{\d s^k}\right)\right.\\
& \quad \left.-\sum_{k=0}^{\deg(p_\lambda)} (-1)^k e^{t(B-\lambda I_{n'})} (B-\lambda I_{n'})^{-k-1}\frac{\d^k p_\lambda(t)}{\d t^k}
\right\} .
\end{aligned}
\eeq 

In this case, $|e^{s(B-\lambda I_{n'})}|$ decays exponentially as $s\to\infty$, while $\d^k p_\lambda(s)/\d s^k$ is a polynomial in $s$.  As a result, the limit as $s\to\infty$ in \eqref{zdef4} is zero. Then simplifying  the remaining part of $z(t)$ in \eqref{zdef4} yields the same formula \eqref{zsum1}
 for $z_\lambda(t)$.
This formula, again, implies $z_\lambda\in \mathcal F_E(-\mu,\C^{n'})$ and, consequently, $z\in \mathcal F_E(-\mu,\C^{n'})$.
\end{proof}
 \subsection{Forcing function with power or logarithmic or iterated logarithmic decay}

The following linear transformations will play crucial roles in our presentation.
 
\begin{definition}\label{defMRZ}
Given an integer $k\ge -1$, let $p\in \mathscr P(k,\C^n)$ be given by \eqref{pzdef} with   $\K=\C$, $X=\C^n$, and $z$ and $\alpha$ as in \eqref{azvec}. 

Define, for $j=-1,0,\ldots,k$, the function $\mathcal M_jp:(0,\infty)^{k+2}\to \C^n$ by 
\beq\label{MM}
(\mathcal M_jp)(z)=\sum_{\alpha\in S} \alpha_j z^\alpha \xi_\alpha.
\eeq 

In the case $k\ge 0$, define the function $ \mathcal R  p:(0,\infty)^{k+2}\to \C^n$ by 
 \beq\label{chiz}
 (\mathcal R p)(z)=
  \sum_{j=0}^k z_0^{-1}z_1^{-1}\ldots z_{j}^{-1}(\mathcal M_j p)(z).
 \eeq 

In the case $p\in \mathscr P_{-1}(k,0,\C^n)$, define the function $\mathcal Z_Ap:(0,\infty)^{k+2}\to \C^n$ by 
 \beq\label{ZAp}
 (\mathcal Z_Ap)(z)=\sum_{\alpha\in S}  z^{\alpha}(A+\alpha_{-1}I_n)^{-1} \xi_{\alpha}.
 \eeq
\end{definition}

In particular,
\beqs
\mathcal M_{-1}p(z)=\sum_{\alpha\in S} \alpha_{-1} z^\alpha \xi_\alpha
\quad \text{and}\quad 
\mathcal M_0p(z)=\sum_{\alpha\in S} \alpha_0 z^\alpha \xi_\alpha .
\eeqs
 
An equivalent definition of $(\mathcal Rp)(z)$ in \eqref{chiz} is
\beq
 (\mathcal R p)(z)=\frac{\partial p(z)}{\partial z_0}+
  \sum_{j=1}^k z_0^{-1}z_1^{-1}\ldots z_{j-1}^{-1}\frac{\partial p(z)}{\partial z_j}.
\eeq 

Note in \eqref{ZAp} that  $\Re(\alpha_{-1})=0$ for all $\alpha\in S$.
The eigenvalues of $A+\alpha_{-1}I_n$ are $\Lambda_j+\alpha_{-1}$ for $1\le j\le n$. 
With $\Re(\Lambda_j+\alpha_{-1})=\Re \Lambda_j>0$, we have  $A+\alpha_{-1}I_n$ is invertible and definition \eqref{ZAp} is valid.

From \eqref{ZAp}, one has $\mathcal Z_Ap\in \mathscr P_{-1}(k,0,\C^n)$. Clearly,
\beq\label{ZAM}
 (A+\mathcal M_{-1})(\mathcal Z_Ap)=\mathcal Z_A((A+\mathcal M_{-1})p)=p\quad \forall p\in \mathscr P_{-1}(k,0,\C^n).
 \eeq
 
 By the mappings $p\mapsto \mathcal M_j p$, $p\mapsto \mathcal Rp$, and $p\mapsto \mathcal Z_A p$, one can define
 linear operator $\mathcal M_j$, for $-1\le j\le k$, on $\mathscr P(k,\C^n)$,
  linear operator $\mathcal R$ on $\mathscr P(k,\C^n)$ for $k\ge 0$,
 and linear operator  $\mathcal Z_A$ on $\mathscr P_{-1}(k,0,\C^n)$ for $k\ge -1$.

The powers $\alpha$'s in \eqref{MM} for $\mathcal M_jp(z)$, and  in \eqref{ZAp} for $(\mathcal Z_Ap)(z)$ are the same as those that appear in \eqref{pzdef} for $p(z)$. 
Consequently, we have the following \ properties. 
\begin{enumerate}[label=\rnum]
\item \label{R0} For $k\ge m\ge 0$ and $\mu\in\R$,   if $p$ is in $\mathscr P_{m}(k,\mu,\C^n)$,  then all $\mathcal M_jp$'s, for $-1\le j\le k$,  and $\mathcal Z_Ap$  are also in $\mathscr P_{m}(k,\mu,\C^n)$. Here,  $\mathcal Z_Ap$  is validly defined thanks to the inclusion \eqref{Pm10}.

\item\label{R1} 
 $\mathcal R p(z)$ has the same powers of  $z_{-1}$ as $p(z)$.
 
\item \label{R2}   
If $p\in\mathscr P_0(k,\mu,\C^n)$, then $\mathcal R p\in\mathscr P_0(k,\mu-1,\C^n)$.
\end{enumerate}

We recall a useful inequality from \cite{CaH3}.

\begin{lemma}[{\cite[Lemma 2.5]{CaH3}}]\label{plnlem}
Let $m\in\Z_+$ and $\lambda>0$, $\gamma>0$  be given. For any number $T_*>E_m(0)$, there exists a number $C>0$ such that
\beq\label{iine2}
 \int_0^t e^{-\gamma (t-\tau)}\iln_m(T_*+\tau)^{-\lambda}\d\tau
 \le C \iln_m(T_*+t)^{-\lambda} \quad\text{for all }t\ge 0.
\eeq
\end{lemma}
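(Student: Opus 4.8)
The plan is to establish the integral bound in \eqref{iine2} by splitting the interval of integration at the midpoint $t/2$ and estimating each piece using the basic monotonicity and growth properties \eqref{Linc}, \eqref{Lone}, \eqref{Lshift} of the iterated logarithm $\iln_m$ together with the exponential weight. Fix $T_*>E_m(0)$, so that $\iln_m(T_*+\tau)$ is positive, increasing, and tending to infinity as $\tau\to\infty$. Write
\beqs
 \int_0^t e^{-\gamma(t-\tau)}\iln_m(T_*+\tau)^{-\lambda}\d\tau
 =\int_0^{t/2}(\cdots)\d\tau+\int_{t/2}^{t}(\cdots)\d\tau=:I_1(t)+I_2(t).
\eeqs

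For $I_1(t)$, I would use that $\iln_m(T_*+\tau)^{-\lambda}\le \iln_m(T_*)^{-\lambda}$ on $[0,t/2]$ (if $\lambda\ge 0$, which is our case; monotonicity of $s\mapsto s^{-\lambda}$), pull this constant out, and bound $\int_0^{t/2}e^{-\gamma(t-\tau)}\d\tau\le \gamma^{-1}e^{-\gamma t/2}$. Since $e^{-\gamma t/2}$ decays exponentially while $\iln_m(T_*+t)^{-\lambda}$ decays only (at most) like a power of an iterated logarithm, the elementary limit \eqref{LLk} (or a direct comparison) gives $e^{-\gamma t/2}\le C_1\iln_m(T_*+t)^{-\lambda}$ for all $t\ge 0$, with $C_1$ depending on $m,\lambda,\gamma,T_*$; hence $I_1(t)\le C\iln_m(T_*+t)^{-\lambda}$. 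For $I_2(t)$, on $[t/2,t]$ one has $\iln_m(T_*+\tau)\ge \iln_m(T_*+t/2)$ by monotonicity, so $\iln_m(T_*+\tau)^{-\lambda}\le \iln_m(T_*+t/2)^{-\lambda}$; pulling this out and using $\int_{t/2}^t e^{-\gamma(t-\tau)}\d\tau\le \gamma^{-1}$ gives $I_2(t)\le \gamma^{-1}\iln_m(T_*+t/2)^{-\lambda}$. It then remains to absorb the shift: by \eqref{Lshift} the ratio $\iln_m(T_*+t)/\iln_m(T_*+t/2)$ — more precisely, writing $\tau=t/2$, the ratio $\iln_m((T_*+t/2)+t/2)/\iln_m(T_*+t/2)\to 1$ as $t\to\infty$ — is bounded above, so $\iln_m(T_*+t/2)^{-\lambda}\le C_2\iln_m(T_*+t)^{-\lambda}$ for all $t\ge 0$, and $I_2(t)\le C\iln_m(T_*+t)^{-\lambda}$ follows. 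Combining the two bounds yields \eqref{iine2}.

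The main subtlety — though not a deep one — is the shift estimate in $I_2$: one must compare $\iln_m$ evaluated at the two arguments $T_*+t$ and $T_*+t/2$, whose difference $t/2$ grows with $t$, so \eqref{Lshift} (which is stated for a fixed shift $T$) does not apply verbatim. The clean fix is to use instead the sub-additive-type behavior of $\iln_m$: since $\iln_m$ is increasing and concave-like for large arguments, one has $\iln_m(2s)\le \iln_m(s)+C$ for $s$ large (indeed $\iln_1(2s)=\ln 2+\ln s$, and each further logarithm only shrinks the gap), hence $\iln_m(2s)\le 2\iln_m(s)$ for $s$ beyond some threshold, and by adjusting constants on the compact complementary range one gets $\iln_m(2s)\le C\iln_m(s)$ for all $s\ge E_m(0)+\text{something}$; applying this with $s=(T_*+t)/2\le T_*+t/2$ handles the ratio uniformly in $t$. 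Alternatively, one may invoke the cited source directly, since this is exactly \cite[Lemma 2.5]{CaH3} and its proof carries over unchanged. I would present the split-interval argument in full, keeping the two elementary comparisons (exponential vs. iterated-log in $I_1$, and the doubling estimate for $\iln_m$ in $I_2$) as short self-contained sublemmas or inline remarks.
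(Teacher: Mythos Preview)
The paper does not give its own proof of this lemma; it simply recalls the statement from \cite[Lemma~2.5]{CaH3} without reproducing the argument. So there is nothing in the paper to compare against directly.

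Your split-at-the-midpoint argument is correct and is in fact the standard way to prove such convolution-type bounds. Both pieces are handled properly: in $I_1$ you bound the integrand by a constant times $e^{-\gamma t/2}$, which is dominated by any negative power of $\iln_m$; in $I_2$ you pull out $\iln_m(T_*+t/2)^{-\lambda}$ and reduce to a doubling estimate $\iln_m(2s)\le C\,\iln_m(s)$. You are right that \eqref{Lshift} as stated (fixed shift) does not cover the growing shift $t/2$, and your fix is the correct one: for $m=0$ the doubling bound is $\iln_0(2s)=2\iln_0(s)$; for $m\ge 1$ one has $\iln_m(2s)=\iln_{m-1}(\ln s+\ln 2)$, and now the shift $\ln 2$ \emph{is} fixed, so \eqref{Lshift} applied to $\iln_{m-1}$ gives $\iln_m(2s)/\iln_m(s)\to 1$. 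Combined with continuity on compact ranges this yields the uniform constant. Your chain $T_*+t\le 2(T_*+t/2)$ together with monotonicity of $\iln_m$ then closes the argument. This is a clean, self-contained proof and almost certainly matches the argument in the cited reference.
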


The next lemma contains the asymptotic approximations of the integrals that appear in the variation of constants formula for the solutions of many linear ODE systems.

\begin{lemma}\label{newplem} 
Let $m\in Z_+$, $\mu>0$, $N\ge m$, $p\in\mathscr P_{m}(N,-\mu,\C^n)$ and $T_*>E_N(0)$.

If $m=0$, then 
 \beq\label{log1}
 \int_0^t e^{-(t-\tau)A} p(\widehat \LL_N(T_*+\tau)) \d\tau=(\mathcal Z_A p)(\widehat \LL_N(T_*+t)) 
 +\bigo((T_*+t)^{-\mu-\gamma})
 \eeq
 for any $\gamma\in(0,1)$.

If $m\ge1$, then
 \beq\label{log2}
 \int_0^t e^{-(t-\tau)A} p(\widehat \LL_N(T_*+\tau)) \d\tau=(\mathcal Z_A p)(\widehat \LL_N(T_*+t)) 
 +\bigo((T_*+t)^{-1}),
 \eeq
 and, consequently,
 \beq\label{log3}
 \int_0^t e^{-(t-\tau)A} p(\widehat \LL_N(T_*+\tau)) \d\tau=(\mathcal Z_A p)(\widehat \LL_N(T_*+t)) 
 +\bigo(L_m(T_*+t)^{-\gamma}) 
 \eeq
 for all $\gamma>0$.
 \end{lemma}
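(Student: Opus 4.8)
The plan is to exhibit the function $w(t):=(\mathcal Z_A p)(\widehat\LL_N(T_*+t))$ as an approximate solution of the linear system $y'+Ay=p(\widehat\LL_N(T_*+\cdot))$, whose residual error decays strictly faster than $p(\widehat\LL_N(T_*+\cdot))$ by (essentially) one extra power of $T_*+t$. Since the integral on the left-hand side of \eqref{log1}--\eqref{log3}, which I denote by $v(t)$, is precisely the solution of this linear system with $v(0)=0$, the difference $v-w$ will solve a linear system driven only by that small residual, and a standard variation of constants estimate combined with Lemma \ref{plnlem} will then yield the stated orders.

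First I would put $q:=\mathcal Z_A p$. By property \ref{R0}, $q\in\mathscr P_{m}(N,-\mu,\C^n)$, and by \eqref{ZAM}, $(A+\mathcal M_{-1})q=p$. Differentiating $w(t)=q(\widehat\LL_N(T_*+t))$ via the chain rule and the derivative rules $\iln_{-1}'=\iln_{-1}$, $\iln_0'=1$ and \eqref{Lmderiv} for $\iln_j'$ with $j\ge1$, and then using the identity $z_{-1}\partial_{z_{-1}}q=\mathcal M_{-1}q$ together with the equivalent formula for $\mathcal Rq$, one finds
\beqs
w'(t)+Aw(t)=\bigl((A+\mathcal M_{-1})q\bigr)(\widehat\LL_N(T_*+t))+(\mathcal Rq)(\widehat\LL_N(T_*+t))=p(\widehat\LL_N(T_*+t))+(\mathcal Rq)(\widehat\LL_N(T_*+t)).
\eeqs
Because $v'+Av=p(\widehat\LL_N(T_*+\cdot))$ and $v(0)=0$, the difference $u:=v-w$ satisfies $u'+Au=-(\mathcal Rq)(\widehat\LL_N(T_*+\cdot))$ with $u(0)=-q(\widehat\LL_N(T_*))$, hence by the variation of constants formula
\beqs
v(t)-(\mathcal Z_A p)(\widehat\LL_N(T_*+t))=u(t)=-e^{-tA}q(\widehat\LL_N(T_*))-\int_0^t e^{-(t-\tau)A}(\mathcal Rq)(\widehat\LL_N(T_*+\tau))\,\d\tau .
\eeqs

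It remains to estimate $u(t)$. The first term is $\bigo(e^{-\lambda_1 t/2})$ by \eqref{eA2}, which is negligible against every error claimed in \eqref{log1}--\eqref{log3}. For the integral I would use $|e^{-(t-\tau)A}|\le C_0 e^{-\lambda_1(t-\tau)/2}$ from \eqref{eA2} and then invoke Lemma \ref{plnlem} with $m=0$. If $m=0$, property \ref{R2} gives $\mathcal Rq\in\mathscr P_0(N,-\mu-1,\C^n)$, so by \eqref{LLo} one has $|(\mathcal Rq)(\widehat\LL_N(s))|=\bigo(s^{-\mu-1+\delta})$ for every $\delta>0$; choosing $\delta\in(0,1-\gamma)$ and applying Lemma \ref{plnlem} shows the integral is $\bigo((T_*+t)^{-\mu-1+\delta})=\bigo((T_*+t)^{-\mu-\gamma})$, which is \eqref{log1}. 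If $m\ge1$, I would expand $\mathcal Rq$ into its finitely many monomial terms: in each, the $z_{-1}$-exponent is purely imaginary, the $z_0$-exponent has real part $\Re(\alpha_0)-1=-1$ (here $\Re\alpha_0=0$, which is where $m\ge1$ enters), the $\iln_\ell$-exponents for $1\le\ell\le m$ have non-positive real part (with the one at level $m$ of real part $\le-\mu$), and those for $\ell>m$ are absorbed via \eqref{LLk}; this gives $|(\mathcal Rq)(\widehat\LL_N(s))|=\bigo(s^{-1})$, and Lemma \ref{plnlem} with exponent $1$ makes the integral $\bigo((T_*+t)^{-1})$, which is \eqref{log2}. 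Finally \eqref{log3} is immediate from \eqref{log2} since $(T_*+t)^{-1}=\bigo(\iln_m(T_*+t)^{-\gamma})$ for every $\gamma>0$ by \eqref{LLk}.

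The step I expect to be the main obstacle is the sharp bound on the residual $(\mathcal Rq)(\widehat\LL_N(\cdot))$ in the case $m\ge1$: merely knowing that $\mathcal Rq\in\mathscr P_0(N,-1,\C^n)$ would only give $\bigo(s^{-1+\delta})$, which is too weak for \eqref{log2}, so one really must trace the real parts of every iterated-logarithmic exponent through the operator $\mathcal R$ and exploit that those at levels $1,\dots,m$ can only stay the same or decrease. The remaining ingredients --- the chain-rule identity for $w$, the variation of constants formula, and the convolution estimate of Lemma \ref{plnlem} --- are routine.
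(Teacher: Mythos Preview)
Your argument is correct and is essentially the same as the paper's, just packaged more conceptually: where the paper reduces to a single monomial $p(z)=z^\alpha\xi$, factors out $e^{i\beta(T_*+t)}$, and integrates by parts against $e^{-(t-\tau)(A+i\beta I_n)}$ to obtain the decomposition $I(t)=(\mathcal Z_Ap)(\widehat\LL_N(T_*+t))-e^{-tA}(\mathcal Z_Ap)(\widehat\LL_N(T_*))-J(t)$, you instead recognize $w=(\mathcal Z_Ap)\circ\widehat\LL_N(T_*+\cdot)$ as an approximate solution of $y'+Ay=p\circ\widehat\LL_N(T_*+\cdot)$ and invoke variation of constants; these two manipulations are equivalent and produce the identical remainder integral (the paper's $J(t)$ is exactly your $\int_0^t e^{-(t-\tau)A}(\mathcal Rq)(\widehat\LL_N(T_*+\tau))\,\d\tau$). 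The subsequent estimates---$\mathcal Rq\in\mathscr P_0(N,-\mu-1,\C^n)$ for $m=0$, and the monomial-by-monomial bound $|(\mathcal Rq)(\widehat\LL_N(s))|=\bigo(s^{-1})$ for $m\ge1$ by tracking the real parts at levels $0,\dots,m$ and absorbing the higher iterated logarithms via \eqref{LLk}---match the paper's computation of $\d F/\d t$ exactly, and both finish with Lemma~\ref{plnlem}.
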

\begin{proof}
It suffices to prove \eqref{log1} and \eqref{log2} for 
$p(z)= z^{ \alpha}\xi$, with 
$$z=(z_{-1},z_0,\ldots,z_k)\in(0,\infty)^{k+2},\
\alpha=(\alpha_{-1},\alpha_0,\ldots,\alpha_k)\in\mathcal E(m,N,-\mu),
$$
and some $\xi\in \C^n$.

Since $m\ge 0$, we have $\alpha_{-1}=i\beta$ for some $\beta\in\R$.
By defining  $F(t)=\prod_{j=0}^N \iln_j(T_*+t)^{\alpha_j} \xi$, we can write 
 \beqs
 p(\widehat \LL_N(T_*+t)) =e^{i\beta (T_*+t)}F(t).
 \eeqs 
 
Denote $I(t)=\int_0^t e^{-(t-\tau)A} p(\widehat \LL_N(T_*+\tau)) \d\tau$. Then
 \begin{align*}
 I(t)= \int_0^t e^{i\beta(T_*+ \tau)}e^{-(t-\tau)A}F(\tau)   \d\tau 
= e^{i\beta (T_*+t)}\int_0^t e^{-(t-\tau)(A+i\beta I_n)}F(\tau)    \d\tau.
 \end{align*}

Note that the function $\tau\mapsto e^{-(t-\tau)(A+i\beta I_n)}$ has  an anti-derivative $(A+i\beta I_n)^{-1} e^{-(t-\tau)(A+i\beta I_n)}$.
Then using integration by parts gives
  \begin{align*}
  I(t) 
&=  e^{i\beta (T_*+t)}\left\{(A+i\beta I_n)^{-1} e^{-(t-\tau)(A+i\beta I_n)}F(\tau)   \Big|_{\tau=0}^{\tau=t}\right.\\
&\qquad\qquad\qquad \left.- \int_0^t (A+i\beta I_n)^{-1}e^{-(t-\tau)(A+i\beta I_n)} \frac{\d F(\tau)}{\d \tau}  \d\tau \right\}\\
&=e^{i\beta (T_*+t)}(A+i\beta I_n)^{-1}F(t) -e^{i\beta T_*}(A+i\beta I_n)^{-1} e^{-tA}F(0)  -J(t),
 \end{align*} 
where
\beqs
J(t)=\int_0^t e^{i\beta(T_*+\tau)}(A+i\beta I_n)^{-1}e^{-(t-\tau)A} \frac{\d F(\tau)}{\d \tau}  \d\tau.
\eeqs

Hence,
\beq\label{Iint}
  I(t) = (A+i\beta I_n)^{-1}p(\widehat \LL_N(T_*+t)) -(A+i\beta I_n)^{-1}e^{-tA} p(\widehat \LL_N(T_*))   -J(t).
\eeq

We consider each term on the right-hand side of \eqref{Iint}.
For the first term, it is obvious that
\beq\label{Iz}
(A+i\beta I_n)^{-1}p(\widehat \LL_N(T_*+t)) =(\mathcal Z_Ap)(\widehat \LL_N(T_*+t)) .
\eeq

For the second term, one has, thanks to \eqref{eA2},  
\beq\label{Iini}
(A+i\beta I_n)^{-1}e^{-tA} p(\widehat \LL_N(T_*))=\bigo(e^{-\lambda_1 t/2}).
\eeq

For the third term,  we estimate $J(t)$ by applying inequality \eqref{eA2} again to have 
\beq\label{JJ}
|J(t)|\le C_1\int_0^t e^{-\lambda_1(t-\tau)/2} \left|\frac{\d F(\tau)}{\d \tau} \right| \d\tau,\text{ where }
C_1=C_0|(A+i\beta I_n)^{-1}|.
\eeq

Taking the derivative of $F(t)$, by using the product rule and \eqref{Lmderiv}, yields
 \begin{align*}
 \frac{\d F(t)}{\d t}
 &=  \alpha_0 (T_*+t)^{-1} \prod_{j=0}^N (\iln_j(T_*+\tau))^{\alpha_j}\xi\\
&\quad  + \sum_{k=1}^N \left[ \alpha_k \iln_k(T_*+t)^{-1} \prod_{j=0}^N \iln_j(T_*+\tau)^{\alpha_j} \left((T_*+t)\prod_{\ell=1}^{k-1}\iln_\ell(T_*+t) \right)^{-1}\right] \xi\\
&=(T_*+t)^{-1}\left\{ \alpha_0
+ \sum_{k=1}^N \left[\alpha_k  \prod_{\ell=1}^k (\iln_\ell(T_*+t))^{-1}\right]\right\}F(t).
 \end{align*}

Same as \eqref{LLo}, 
$$F(t) =\bigo(\iln_m(T_*+t)^{-\mu+s})\text{ for all } s>0.$$

Additionally, note, for $\ell\ge 1$, that $\iln_\ell(T_*+t)^{-1}=\bigo(1)$ as $t\to\infty$.

Therefore, it holds, for any $s>0$, that 
\beq\label{dF0}
 \left|\frac{\d F(t)}{\d t}\right|=\bigo\left( (T_*+t)^{-1} \iln_m(T_*+t)^{-\mu+s}\right).
  \eeq

\medskip\noindent
Consider $m=0$. Let $\gamma$ be an arbitrary number in $(0,1)$.
 Taking $s=1-\gamma>0$ in \eqref{dF0} yields
\beqs
\left| \frac{\d F(t)}{\d t}\right|=\bigo ((T_*+t)^{-\mu-\gamma}).
 \eeqs

By the continuity of $\d F(t)/\d t$ and $ (T_*+t)^{-\mu-\gamma}$ on $[0,\infty)$, we deduce
\beq\label{dF1}
\left| \frac{\d F(t)}{\d t}\right|\le C_2 (T_*+t)^{-\mu-\gamma} \text{ for all }t\ge 0,
 \eeq
for some positive constant $C_2$.
Using \eqref{dF1} in \eqref{JJ} and then applying Lemma \ref{plnlem} to estimate the resulting  integral gives
\beq\label{Jbo}
|J(t)|\le C_1C_2\int_0^t e^{-\lambda_1(t-\tau)/2}(T_*+\tau)^{-\mu-\gamma} \d\tau
 =\bigo((T_*+t)^{-\mu-\gamma}).
\eeq

Combining \eqref{Iint},  \eqref{Iz}, \eqref{Iini} and \eqref{Jbo} yields \eqref{log1}.

\medskip\noindent
Consider $m\ge 1$. Taking $s=\mu>0$ in \eqref{dF0} yields 
\beqs
\left|\frac{\d F(t)}{\d t}\right|=\bigo((T_*+t)^{-1}).
\eeqs 
 
We obtain, similar to \eqref{Jbo}, that
\beq\label{Jbo2}
J(t)=\bigo((T_*+t)^{-1}).
\eeq

Combining \eqref{Iint},  \eqref{Iz}, \eqref{Iini} and \eqref{Jbo2} yields \eqref{log2}. Then \eqref{log1} for $m\ge 1$ follows \eqref{log2}.

Finally, inequality \eqref{log3} clearly is a consequence of \eqref{log2}.
 \end{proof}

The main asymptotic approximation result for this subsection is the following.

\begin{theorem}\label{iterlog}
Given integers $m,k\in \Z_+$ with $k\ge m$, and a number $t_0>E_k(0)$.
Let  $\mu>0$, $p\in \mathscr P_{m}(k,-\mu,\C^n)$, and let function $g\in C([t_0,\infty),\C^n)$ satisfy 
\beq \label{gprop}
|g(t)|=\bigo(\iln_m(t)^{-\alpha})\text{ for some }\alpha>\mu.
\eeq 

Suppose $y\in C([t_0,\infty),\C^n)$ is a solution of 
 \beq\label{yeq4}
 y'=-Ay+p(\widehat \LL_k(t))+g(t)\text{ on } (t_0,\infty).
 \eeq

 Then  there exists  $\delta >0$  such that 
 \beq\label{ypL}
 |y(t)-(\mathcal Z_Ap)(\widehat \LL_k(t))|=\bigo(\iln_m(t)^{-\mu-\delta}).
 \eeq
 \end{theorem}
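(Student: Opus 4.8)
The plan is to apply the variation of constants (Duhamel) formula to equation \eqref{yeq4} and then invoke the two integral estimates already established in this section: Lemma \ref{newplem}, to extract the main term $(\mathcal Z_A p)(\widehat\LL_k(t))$ produced by the coherent forcing $p(\widehat\LL_k(t))$, and Lemma \ref{plnlem}, to absorb the contribution of the faster-decaying remainder $g$. Since $t_0>E_k(0)$, the function $t\mapsto p(\widehat\LL_k(t))$ is continuous on $[t_0,\infty)$, so the right-hand side of \eqref{yeq4} is continuous there, $y\in C^1([t_0,\infty),\C^n)$, and the solution is uniquely determined by $y(t_0)$. After translating the time variable I would record, for all $t\ge0$,
\beqs
y(t_0+t)=e^{-tA}y(t_0)+\int_0^t e^{-(t-\tau)A}p(\widehat\LL_k(t_0+\tau))\,\d\tau+\int_0^t e^{-(t-\tau)A}g(t_0+\tau)\,\d\tau ,
\eeqs
where $t_0$ takes the role of $T_*$ in Lemmas \ref{plnlem} and \ref{newplem}; the hypothesis $T_*>E_m(0)$ needed there holds because $t_0>E_k(0)\ge E_m(0)$ (as $k\ge m$ and $j\mapsto E_j(0)$ is increasing).

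Next I would estimate the three terms. The homogeneous term is $\bigo(e^{-\lambda_1 t/2})$ by \eqref{eA2}, which is negligible compared with any negative power of $\iln_m(t_0+t)$ in view of \eqref{LLk} (recall $\iln_m\le\iln_0$ for large argument); hence it is $\bigo(\iln_m(t_0+t)^{-\mu-\delta})$ for every $\delta>0$. For the first integral, Lemma \ref{newplem} applies with $N=k$ and $T_*=t_0$ — note that $p\in\mathscr P_m(k,-\mu,\C^n)\subset\mathscr P_{-1}(k,0,\C^n)$ by \eqref{Pm10}, so $\mathcal Z_A p$ is well defined — and gives
\beqs
\int_0^t e^{-(t-\tau)A}p(\widehat\LL_k(t_0+\tau))\,\d\tau=(\mathcal Z_A p)(\widehat\LL_k(t_0+t))+\bigo\bigl(\iln_m(t_0+t)^{-\mu-1/2}\bigr),
\eeqs
where the remainder comes from \eqref{log1} with $\gamma=1/2$ when $m=0$ (using $\iln_0(t)=t$), and from \eqref{log3} with $\gamma=\mu+1/2$ when $m\ge1$. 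For the second integral, the assumption \eqref{gprop}, together with the continuity of $g$ and of the positive function $\tau\mapsto\iln_m(t_0+\tau)^{-\alpha}$ on $[0,\infty)$, upgrades to a genuine pointwise bound $|g(t_0+\tau)|\le C\,\iln_m(t_0+\tau)^{-\alpha}$ valid for all $\tau\ge0$; then \eqref{eA2} and Lemma \ref{plnlem} (with exponential rate $\lambda_1/2$ and power $\alpha$) yield
\beqs
\Bigl|\int_0^t e^{-(t-\tau)A}g(t_0+\tau)\,\d\tau\Bigr|\le C_0\int_0^t e^{-\lambda_1(t-\tau)/2}\bigl|g(t_0+\tau)\bigr|\,\d\tau=\bigo\bigl(\iln_m(t_0+t)^{-\alpha}\bigr),
\eeqs
and since $\alpha>\mu$ this is $\bigo(\iln_m(t_0+t)^{-\mu-\delta})$ with $\delta=\alpha-\mu>0$.

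Collecting the three estimates with $\delta:=\min\{1/2,\alpha-\mu\}>0$ gives $|y(t_0+t)-(\mathcal Z_A p)(\widehat\LL_k(t_0+t))|=\bigo(\iln_m(t_0+t)^{-\mu-\delta})$; read as a statement about the function near infinity (the substitution $t\mapsto t_0+t$ is a bijection of $[0,\infty)$ onto $[t_0,\infty)$), this is exactly \eqref{ypL}. I do not anticipate a genuine obstacle, as the analytic work is carried out in Lemmas \ref{newplem} and \ref{plnlem}, which we may use; the only points needing care are the mild bookkeeping that merges the two remainder rates of Lemma \ref{newplem} into a single gain $\iln_m^{-\mu-\delta}$ via \eqref{LLk}, and the routine upgrade of the $\bigo$-hypothesis on $g$ to a pointwise inequality valid down to $\tau=0$ so that Lemma \ref{plnlem} applies verbatim.
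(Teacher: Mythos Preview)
Your proposal is correct and follows essentially the same approach as the paper: variation of constants, then estimate the three terms via \eqref{eA2}, Lemma~\ref{newplem} (with $\gamma=1/2$ for $m=0$ and $\gamma=\mu+1/2$ for $m\ge1$), and Lemma~\ref{plnlem}, finishing with $\delta=\min\{1/2,\alpha-\mu\}$. Your extra bookkeeping---that $t_0>E_k(0)\ge E_m(0)$, that $\mathcal Z_A p$ is well defined via \eqref{Pm10}, and the upgrade of the $\bigo$-bound on $g$ to a pointwise inequality---is accurate and matches what the paper does implicitly or explicitly.
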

 \begin{proof}
 By the variation of constant formula, 
\beq\label{ytt}
y(t_0+t)=e^{-tA}y(t_0)+\int_{0}^{t} e^{-(t-\tau)A} p(\widehat\LL_k(t_0+\tau))\d\tau+\int_{0}^{t} e^{-(t-\tau)A}g(t_0+\tau)\d\tau.
\eeq

For the first term on the right-hand side of \eqref{ytt}, we have, by \eqref{eA2},  
\beqs 
e^{-tA}y(t_0)=\bigo(e^{-\lambda_1 t/2})=\bigo(\iln_m(t_0+t)^{-\mu-1}).
\eeqs 

For the second term on the right-hand side of \eqref{ytt},  we apply Lemma \ref{newplem} to $N=k$, $T_*=t_0$, using equation \eqref{log1} with $\gamma=1/2$ when $m=0$, and equation \eqref{log3} with $\gamma=\mu+1/2$ when $m\ge 1$. 
It results in 
\beqs
   \int_{0}^{t} e^{-(t-\tau)A} p(\widehat\LL_k(t_0+\tau))\d\tau=(\mathcal Z_Ap)(\widehat \LL_k(t_0+t))+\bigo(\iln_m(t_0+t)^{-\mu-1/2}).
\eeqs 

Consider  the third term on the right-hand side of \eqref{ytt}.
By \eqref{gprop} and the continuity of $g(t)$ and $\iln_m(t)>0$ on $[t_0,\infty)$, there is $C>0$ such that 
\beq\label{gbound}
|g(t)|\le C\iln_m(t)^{-\alpha}\quad\text{ for all $t\ge t_0$. }
\eeq

Then combining  inequalities \eqref{eA2}, \eqref{gbound} with  Lemma \ref{plnlem} yields
\beqs
\left| \int_{0}^{t} e^{-(t-\tau)A}g(t_0+\tau)\d\tau\right|
\le C_0C\int_{0}^{t} e^{-\lambda_1(t-\tau)/2}\iln_m(t_0+t)^{-\alpha}\d\tau =\bigo(\iln_m(t_0+t)^{-\alpha}).
\eeqs

Let $\delta=\min\{1/2,\alpha-\mu\}>0$. We obtain
\beq
y(t_0+t)=(\mathcal Z_Ap)(\widehat \LL_k(t_0+t))+\bigo(\iln_m(t_0+t)^{-\mu-\delta}),
\eeq
which proves \eqref{ypL}.
\end{proof}

Note that $q=\mathcal Z_Ap$ in \eqref{ypL} belongs to $\mathscr P_{m}(k,-\mu,\C^n)$, the same as $p$ on the right-hand side of equation \eqref{yeq4}. Estimate \eqref{ypL} shows that $q(\widehat \LL_k(t)$ is an asymptotic approximation of $y(t)$.
Such function $q(\widehat \LL_k(t)$ turns out to satisfy a specific ODE as shown in the next lemma.

\begin{lemma}\label{logode}
If $k\in \Z_+$ and $q\in \mathscr P(k,\C^n)$, then
 \beq\label{dq0}
\ddt q(\widehat \LL_k(t))=\mathcal M_{-1} q(\widehat \LL_k(t))+\mathcal Rq(\widehat \LL_k(t)) \text{ for }t>E_k(0).
 \eeq

In particular, when  $k\ge m\ge 1$, $\mu\in\R$, and $q\in\mathscr P_{m}(k,\mu,\C^n)$, one has
 \beq\label{dq1}
 \ddt q(\widehat \LL_k(t))=\mathcal M_{-1} q(\widehat \LL_k(t))+\bigo(t^{-\gamma})\quad
 \text{for all }\gamma\in(0,1).
 \eeq
 \end{lemma}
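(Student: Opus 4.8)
The plan is to prove \eqref{dq0} by linearity and a direct application of the chain rule, then to deduce \eqref{dq1} from \eqref{dq0} together with the decay properties recorded in \eqref{LLk} and \eqref{Lmderiv}. By part \ref{Cb} of the observations after Definition \ref{Fclass} (i.e. $\mathscr P(k,\C^n)$ is a linear space) and the linearity of $\mathcal M_{-1}$ and $\mathcal R$, it suffices to verify \eqref{dq0} for a single monomial $q(z)=z^\alpha\xi=\prod_{j=-1}^k z_j^{\alpha_j}\xi$ with $\alpha\in\C^{k+2}$ and $\xi\in\C^n$.

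First I would compute $\ddt q(\widehat\LL_k(t))$ directly. Writing $\widehat\LL_k(t)=(\iln_{-1}(t),\iln_0(t),\ldots,\iln_k(t))=(e^t,t,\ln t,\ldots,\iln_k(t))$, we have $q(\widehat\LL_k(t))=\bigl(\prod_{j=-1}^k \iln_j(t)^{\alpha_j}\bigr)\xi$, and by the product rule together with $\ddt(\iln_j(t)^{\alpha_j})=\alpha_j\iln_j(t)^{\alpha_j-1}\iln_j'(t)$ we get
\beqs
\ddt q(\widehat\LL_k(t))=\sum_{j=-1}^k \alpha_j\,\frac{\iln_j'(t)}{\iln_j(t)}\,q(\widehat\LL_k(t)).
\eeqs
Now $\iln_{-1}'(t)/\iln_{-1}(t)=e^t/e^t=1$, so the $j=-1$ term is $\alpha_{-1}q(\widehat\LL_k(t))=\mathcal M_{-1}q(\widehat\LL_k(t))$. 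For $j=0$, $\iln_0'(t)/\iln_0(t)=1/t$, and for $1\le j\le k$, formula \eqref{Lmderiv} gives $\iln_j'(t)=\bigl(\prod_{\ell=0}^{j-1}\iln_\ell(t)\bigr)^{-1}$, so $\iln_j'(t)/\iln_j(t)=\bigl(\prod_{\ell=0}^{j}\iln_\ell(t)\bigr)^{-1}=z_0^{-1}z_1^{-1}\cdots z_j^{-1}$ evaluated at $z=\widehat\LL_k(t)$. Collecting the terms $j=0,1,\ldots,k$ and comparing with the definition \eqref{chiz} of $\mathcal Rp$ (noting $\mathcal M_jq(z)=\alpha_j z^\alpha\xi$ for a monomial), these sum exactly to $\mathcal Rq(\widehat\LL_k(t))$. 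This establishes \eqref{dq0}; by linearity it holds for all $q\in\mathscr P(k,\C^n)$, valid for $t>E_k(0)$ where all the $\iln_j$ are positive and differentiable.

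For \eqref{dq1}, I would apply \eqref{dq0} and argue that $\mathcal Rq(\widehat\LL_k(t))=\bigo(t^{-\gamma})$ for every $\gamma\in(0,1)$. Since $q\in\mathscr P_m(k,\mu,\C^n)$ with $m\ge1$, property \ref{R2} after Definition \ref{defMRZ} gives $\mathcal Rq\in\mathscr P_0(k,\mu-1,\C^n)$; more concretely, every monomial appearing in $\mathcal Rq(z)$ carries a factor $z_0^{-1}$ times a power vector whose $z_0$-exponent has real part $\mu-1$ once the $z_0^{-1}$ is absorbed — i.e. each term of $\mathcal Rq(\widehat\LL_k(t))$ is $t^{-1}$ times a factor of the form $\widehat\LL_k(t)^{\beta}$ with $\beta\in\mathcal E(m',k,\mu')$ for suitable indices, bounded by $\bigo(\iln_{m}(t)^{\mu+s})$ for any $s>0$ via \eqref{LLo} (the imaginary powers in positions $-1,0,\ldots,m-1$ contribute modulus $1$, the $\iln_j$ for $j\ge m$ grow slower than any positive power of $\iln_m$ by \eqref{LLk}). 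Hence $\mathcal Rq(\widehat\LL_k(t))=\bigo(t^{-1}\iln_m(t)^{\mu+s})$; choosing $s$ so that $\iln_m(t)^{\mu+s}=o(t^{1-\gamma})$ — which is possible for any fixed $\gamma\in(0,1)$ since $\iln_m(t)\le t$ for $m\ge1$ and large $t$, so $\iln_m(t)^{\mu+s}\le t^{\mu+s}$ and we need only $\mu+s<1-\gamma$ when... more carefully, one instead notes $\iln_1(t)^{\mu+s}=(\ln t)^{\mu+s}=o(t^{1-\gamma})$ and for $m\ge2$ the iterated logs are even smaller — gives $\mathcal Rq(\widehat\LL_k(t))=o(t^{-\gamma})=\bigo(t^{-\gamma})$.

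The main obstacle is the bookkeeping in the last step: one must make sure the exponent estimate $\iln_m(t)^{\mu+s}=o(t^{1-\gamma})$ is handled uniformly and correctly, in particular recognizing that for $m\ge1$ any positive power of $\iln_m(t)$ is dominated by any positive power of $t$ (a consequence of \eqref{LLk} with the roles of $\iln_0=t$ and $\iln_m$), so that the $t^{-1}$ from differentiating $\iln_0$ and the iterated-log factors always wins against $\iln_m(t)^{\mu}$ with room to spare for a $t^{-\gamma}$, $\gamma<1$. Everything else is routine chain-rule computation and the already-established algebraic identity \eqref{chiz}.
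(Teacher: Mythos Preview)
Your proof is correct and follows essentially the same approach as the paper: reduce to monomials by linearity, apply the chain rule together with \eqref{Lmderiv} to obtain \eqref{dq0}, and then estimate $\mathcal Rq(\widehat\LL_k(t))$ to deduce \eqref{dq1}.

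One small slip worth flagging: your appeal to property \ref{R2} is misapplied. For $q\in\mathscr P_m(k,\mu,\C^n)$ with $m\ge1$ one has $\Re\alpha_0=0$, so $q\in\mathscr P_0(k,0,\C^n)$ and \ref{R2} actually yields $\mathcal Rq\in\mathscr P_0(k,-1,\C^n)$, not $\mathscr P_0(k,\mu-1,\C^n)$. This does not damage your argument, since you immediately abandon the citation and estimate the monomials directly. The paper carries out that direct estimate more cleanly: it writes $\mathcal Rq(\widehat\LL_k(t))=t^{-1}R(t)$, observes that each $\mathcal M_jq$ (for $0\le j\le k$) lies in $\mathscr P_0(k,0,\C^n)$, and invokes \eqref{LLo} with $m=\mu=0$ to get $R(t)=\bigo(t^s)$ for every $s>0$, hence $\mathcal Rq(\widehat\LL_k(t))=\bigo(t^{-\gamma})$ for all $\gamma\in(0,1)$. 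Your route through $\bigo(t^{-1}\iln_m(t)^{\mu+s})$ reaches the same conclusion but with unnecessary bookkeeping.
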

\begin{proof}
Because formulas \eqref{dq0} and \eqref{dq1} depend linearly on $q$,
it suffices to prove them for $q(z)= z^{\alpha}\xi$, for a constant vector $\xi\in\C^n$, where $z=(z_{-1},z_0,\ldots,z_k)$ and $\alpha=(\alpha_{-1},\alpha_0,\ldots,\alpha_k)$.

By the chain rule and \eqref{Lmderiv}, one has
\begin{align*}
\ddt q(\widehat \LL_k(t))
&=\alpha_{-1}z^{\alpha} \Big|_{z=\widehat\LL_k(t)} \xi 
+t^{-1}\left( 
\alpha_0 z^{\alpha}\Big|_{z=\widehat\LL_k(t)}
+ \sum_{j=1}^k\frac{1}{\iln_1(t)\ldots \iln_{j-1}(t)}\cdot \left.\frac{\alpha_j z^\alpha}{z_j}\right|_{z=\widehat\LL_k(t)}  
 \right )\xi\\
&=\alpha_{-1}q(\widehat \LL_k(t))+t^{-1}\left.  
\left(\mathcal M_0 q(z)
+  \sum_{j=1}^k\frac{1}{z_1\ldots z_j}\cdot\mathcal M_jq(z) \right )
\right|_{z=\widehat\LL_k(t)}.
\end{align*}
Therefore, we obtain \eqref{dq0}.

Consider $k\ge m\ge 1$ and $q\in\mathscr P_{m}(k,\mu,\C^n)$ now.   In this case, $\Re\alpha_{-1}=\Re\alpha_0=0$, which implies that $\mathcal M_jq$, for $0\le j\le k$, belongs to $\mathscr P_0(k,0,\C^n)$. Then, by applying \eqref{LLo} to $m=\mu=0$, one has
\beqs
R(t)\eqdef  \left.\left(\mathcal M_0 q(z) 
+\sum_{j=1}^k z_1^{-1}z_2^{-1}\ldots z_{j}^{-1}\mathcal M_jq(z)\right)\right|_{z=\widehat \LL_k(t)}=\bigo(t^s)
\text{ for any $s>0$.}
\eeqs
 Therefore,
\beq\label{Rqo}
\mathcal Rq(\widehat \LL_k(t))=t^{-1}R(t)=\bigo(t^{-\gamma})\quad\text{for all }\gamma\in(0,1).
\eeq
Then \eqref{dq1} follows \eqref{dq0} and \eqref{Rqo}.
\end{proof}

Note that if $k=-1$, then one can define $\mathcal R q=0$ and formula \eqref{dq0} still holds true.

\section{Global existence and asymptotic estimates}\label{odesec}

In this section, we establish the existence and uniqueness of a solution $y(t)$ of \eqref{sys-eq} for all $t\ge 0$ when the initial data and the forcing function are small. For a decaying solution, we obtain its specific decaying rates corresponding to that of the forcing function.

The following Theorems \ref{thmsmall}  and \ref{thmdecay} are counterparts of Theorems 3.1 and 3.2 in \cite{CaH3}. However, because of the changed assumption on the matrix $A$, the proofs presented below are different. They are based on a standard technique \cite{CL55} using the variation of constants formula instead of differential inequalities.
(See also \cite[Lemma 3]{Shi2000} for an autonomous system.)

\begin{theorem}\label{thmsmall}
There are positive numbers $\varep_0$ and $\varep_1$ such that if $y_0\in\C^n$  and 
$f\in C([0,\infty))$ satisfy 
\beq\label{fep} 
|y_0|<\varep_0 \text{ and }\|f\|_{\infty}:=\sup\{|f(t)|:t\in[0,\infty)\}<\varep_1,
\eeq 
then there exists a unique solution $y\in C^1([0,\infty))$ of \eqref{sys-eq} on $[0,\infty)$ with $y(0)=y_0$. 
Moreover, 
\beq\label{limyf}
\limsup_{t\to\infty}|y(t)|\le \frac{4C_0}{\lambda_1}\limsup_{t\to\infty}|f(t)|.
\eeq

Consequently, if 
\beq\label{limf} 
\lim_{t\to\infty}f(t)=0,
\eeq  
then 
 \begin{equation}\label{limy}
  \lim_{t\to\infty} y(t)=0.
 \end{equation}
 \end{theorem}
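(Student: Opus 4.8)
The plan is to prove Theorem~\ref{thmsmall} in three stages: local existence, a~priori bound giving global existence, and then the asymptotic estimates.

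\textbf{Stage 1: Local existence and the integral formulation.} First I would recast \eqref{sys-eq} with $y(0)=y_0$ as the integral equation
\beq\label{intform}
y(t)=e^{-tA}y_0+\int_0^t e^{-(t-\tau)A}\bigl(G(y(\tau))+f(\tau)\bigr)\d\tau.
\eeq
Since $G$ is locally Lipschitz by Assumption~\ref{assumpG}\ref{aG1} and $f$ is continuous, the standard Picard--Lindel\"of argument (as in \cite{CL55}) gives a unique solution on a maximal interval $[0,T_{\max})$. It remains to show $T_{\max}=\infty$ under the smallness hypothesis, and for that I need an a~priori bound keeping $|y(t)|$ inside the ball of radius $r_*$ where the quadratic bound \eqref{Gyy} holds.

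\textbf{Stage 2: A~priori bound via the variation of constants formula.} The key step — and the main obstacle — is to choose $\varep_0,\varep_1$ so that the solution stays small for all time. Using \eqref{eA2}, \eqref{Gyy}, and \eqref{fep} in \eqref{intform}, as long as $|y(\tau)|<r_*$ on $[0,t]$ one gets
\beq\label{apriori}
|y(t)|\le C_0 e^{-\lambda_1 t/2}|y_0|+\int_0^t C_0 e^{-\lambda_1(t-\tau)/2}\bigl(c_*|y(\tau)|^2+\varep_1\bigr)\d\tau.
\eeq
Set $M(t)=\sup_{0\le \tau\le t}|y(\tau)|$. Carrying out the elementary integral $\int_0^t e^{-\lambda_1(t-\tau)/2}\d\tau\le 2/\lambda_1$ and taking the supremum over $[0,t]$ yields a self-improving inequality of the form $M(t)\le C_0\varep_0+\tfrac{2C_0}{\lambda_1}(c_*M(t)^2+\varep_1)$. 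A continuity/bootstrap argument then shows: if $\varep_0$ and $\varep_1$ are small enough that this quadratic inequality forces $M(t)$ to stay below some fixed $\rho<r_*$ (one checks the discriminant condition $\tfrac{8C_0^2 c_*}{\lambda_1}(C_0\varep_0+\tfrac{2C_0\varep_1}{\lambda_1})<1$ or similar), then $M(t)$ cannot escape the smaller root, hence $|y(t)|$ stays bounded away from $r_*$, so by the standard continuation criterion $T_{\max}=\infty$. The delicate point is making the bootstrap rigorous: one shows the set of $t$ where $M(t)\le\rho$ is open, closed, and nonempty in $[0,T_{\max})$.

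\textbf{Stage 3: Asymptotic estimates.} For \eqref{limyf}, set $\ell=\limsup_{t\to\infty}|f(t)|$; given $\eta>0$ there is $T$ with $|f(t)|\le \ell+\eta$ for $t\ge T$. Splitting the integral in \eqref{intform} at $T$, the contribution of $[0,T]$ together with $e^{-tA}y_0$ decays like $e^{-\lambda_1 t/2}$ and vanishes in the limsup, while on $[T,t]$ one estimates as in \eqref{apriori} using $c_*|y(\tau)|^2$ and $\ell+\eta$; setting $L=\limsup_{t\to\infty}|y(t)|$ and passing to the limsup (the $|y|^2$ term can be absorbed since $L$ is already known finite and small, making its quadratic contribution lower order after taking $\varep_0,\varep_1$ smaller if needed, or one iterates) gives $L\le \tfrac{2C_0}{\lambda_1}(L^2\cdot(\text{small})+\ell+\eta)$, and letting $\eta\to0$ together with the smallness of the quadratic coefficient yields $L\le \tfrac{4C_0}{\lambda_1}\ell$ (the factor $4$ instead of $2$ absorbing the quadratic term). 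Finally, \eqref{limy} is the special case $\ell=0$ of \eqref{limyf}: if $\lim f=0$ then $L\le 0$, so $\lim_{t\to\infty}y(t)=0$.
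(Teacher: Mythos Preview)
Your proposal is correct and follows essentially the same approach as the paper: the variation-of-constants formula, a bootstrap/continuation argument to keep $|y(t)|$ below a fixed threshold $M<r_*$ (the paper just picks $M,\varep_0,\varep_1$ explicitly rather than invoking a discriminant condition), and then the limsup inequality $L\le \tfrac{2C_0}{\lambda_1}(c_*L^2+F)$ with the quadratic term absorbed via $c_*L\le c_*M\le \tfrac{\lambda_1}{4C_0}$. The only cosmetic difference is that the paper cites a lemma for the limsup of the convolution integral while you split the integral by hand; both work.
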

\begin{proof}
Recall that $C_0$ is the positive constant in \eqref{eA2}. We choose 
\beq\label{Mechoice}
M=\min \left\{ r_*,\frac{\lambda_1}{12C_0c_*} \right\},\quad 
\varep_0=\min \left \{\frac{M}{2},\frac{M}{6C_0} \right \},\quad 
\varep_1=\frac{\lambda_1 M}{12C_0}.
\eeq

Suppose that solution $y(t)$ exists on $[0,T)$, for some $T\in (0,\infty]$, and 
\beq\label{yasum}
|y(t)|< M\text{ for all } t\in [0,T).
\eeq

For $t\in[0,T)$, one has 
\beq\label{y0}
y(t)=e^{-tA}y_0+\int_0^t e^{-(t-\tau)A}(G(y(\tau))+f(\tau))\d\tau.
\eeq

Using \eqref{y0}, \eqref{eA2} and \eqref{Gyy}, we can estimate
\beq\label{y1}
|y(t)|\le C_0 e^{-\lambda_1 t/2} |y_0|+ C_0 \int_0^t e^{-\lambda_1(t-\tau)/2} (c_*|y(\tau)|^2+|f(\tau)|)\d\tau. 
\eeq

By the assumptions in \eqref{fep}, we have
\beq\label{y3}
|y(t)|
\le C_0e^{-\lambda_1t/2}\varep_0+ C_0 \int_0^t e^{-\lambda_1(t-\tau)/2}(c_*M^2+\varep_1)\d\tau
\le C_0\varep_0+\frac{2C_0}{\lambda_1}\left(c_*M^2+\varep_1\right) . 
\eeq

The last upper bounded in \eqref{y3} can be further estimated, thanks to \eqref{Mechoice},  by 
$$C_0\varep_0+\frac{2C_0c_*M}{\lambda_1}M+\frac{2C_0\varep_1}{\lambda_1} \le \frac{M}{6}+\frac{M}{6}+\frac{M}{6}=\frac{M}{2}.$$

By the standard contradiction argument with the choice of parameters in \eqref{Mechoice}, we obtain $T=\infty$, and \eqref{yasum} holds true.

\medskip
We prove \eqref{limyf} now. Let  $F=\limsup_{t\to\infty} |f(t)|$ and $L=\limsup_{t\to\infty} |y(t)|$.  By properties \eqref{fep} and \eqref{yasum} with $T$ already being established to be $\infty$, we have
\beqs
0\le F\le \| f \|_\infty\text{ and } 0\le L\le M.
\eeqs 

By taking the limit superior of \eqref{y1} as $t\to\infty$ and using \cite[Lemma 3.9]{HI1} to deal with integral, we obtain
\begin{align*}
L&\le \frac{2C_0}{\lambda_1}\left(c_* \limsup_{t\to\infty} |y(t)|^2+\limsup_{t\to\infty} |f(t)|\right)
= \frac{2C_0}{\lambda_1}(c_*L^2+F)\\
&\le \frac{2C_0c_*M}{\lambda_1}\cdot L+ \frac{2C_0}{\lambda_1}F.
\end{align*}

Because of \eqref{Mechoice}, one has  $2C_0c_* M/\lambda_1\le 1/6< 1/2$. Hence, it follows that
$L\le 4C_0F/\lambda_1$,
which proves \eqref{limyf}.

\medskip
Finally, under condition \eqref{limf}, one has $F=0$ which, by \eqref{limyf}, deduces \eqref{limy}. 
\end{proof}

Thanks to Theorem \ref{thmsmall}, the set of solutions $y(t)$ that satisfy \eqref{limy} is not empty.
When more information about \eqref{limf} is provided,  more specific form of \eqref{limy} will be obtained.  

\begin{theorem}\label{thmdecay}
Assume there is $T\ge 0$ such that $f\in C((T,\infty))$. 
 Let $y\in C^1((T,\infty))$ be a solution of \eqref{sys-eq} on $(T,\infty)$ that satisfies 
 \beq\label{limyy}
 \liminf_{t\to\infty}|y(t)|=0.
 \eeq  

\begin{enumerate}[label=\tnum]
\item \label{Th1}
 If there is a number $\alpha\in(0,\lambda_1)$ such that
\beq\label{fbigo}
f(t)= \bigo(e^{-\alpha t}),
\eeq
then
\beq\label{ylin1}
y(t)= \bigo(e^{-\alpha t}).
\eeq

\item\label{Th2} If there are numbers $m\in\Z_+$ and $\alpha>0$ such that 
\beq\label{fsmallo}
f(t)=o(\iln_m(t)^{-\alpha}),
\eeq
then
\beq\label{tlin2}
y(t)=\bigo(\iln_m(t)^{-\alpha}).
\eeq
\end{enumerate}
\end{theorem}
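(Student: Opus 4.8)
The plan is to use the variation of constants formula starting from a well-chosen initial time, together with the decay bound \eqref{eA2} on $|e^{-tA}|$ and the integral estimates already available (Lemma \ref{plnlem} for part \ref{Th2}, and an elementary exponential integral for part \ref{Th1}), to set up a Grönwall-type / bootstrap argument that upgrades the a priori knowledge $\liminf_{t\to\infty}|y(t)|=0$ into the claimed uniform decay. The first step, common to both parts, is to fix $\varepsilon>0$ small (to be chosen depending on $\alpha$ and $\lambda_1$) and, using $\liminf_{t\to\infty}|y(t)|=0$ together with Theorem \ref{thmsmall}'s philosophy, select a time $T_1>T$ at which $|y(T_1)|$ is as small as we like — in particular small enough that the solution stays in the ball $\{|x|<r_*\}$ where the quadratic bound $|G(x)|\le c_*|x|^2$ from \eqref{Gyy} applies, for all $t\ge T_1$. (One should first argue that $y(t)$ stays small for all large $t$: since $\liminf|y(t)|=0$, pick $T_1$ with $|y(T_1)|$ tiny, and run the same contradiction argument as in the proof of Theorem \ref{thmsmall}, now on $[T_1,\infty)$ with $f$ already small for large $t$ by \eqref{fbigo} or \eqref{fsmallo}, to conclude $|y(t)|<M$ for all $t\ge T_1$; this is the step I expect to require the most care, since one must simultaneously control the forcing tail and the initial value.)

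For part \ref{Th1}, on $[T_1,\infty)$ write
\beqs
y(T_1+t)=e^{-tA}y(T_1)+\int_0^t e^{-(t-\tau)A}\bigl(G(y(T_1+\tau))+f(T_1+\tau)\bigr)\,\d\tau,
\eeqs
and estimate using \eqref{eA2} and \eqref{Gyy}:
\beqs
|y(T_1+t)|\le C_0 e^{-\lambda_1 t/2}|y(T_1)| + C_0\int_0^t e^{-\lambda_1(t-\tau)/2}\bigl(c_*|y(T_1+\tau)|^2 + C e^{-\alpha(T_1+\tau)}\bigr)\,\d\tau.
\eeqs
Because $|y|$ is already known to be bounded by $M$ with $2C_0 c_* M/\lambda_1<1/2$, the quadratic term $c_*|y(T_1+\tau)|^2\le c_* M |y(T_1+\tau)|$ can be absorbed: set $\phi(t)=e^{\alpha t}|y(T_1+t)|$, multiply through by $e^{\alpha t}$, use $\alpha<\lambda_1$ so that $e^{-\lambda_1(t-\tau)/2}e^{\alpha t}=e^{(\alpha-\lambda_1/2)(t-\tau)}e^{\alpha\tau}$ (and if $\alpha<\lambda_1/2$ the exponential kernel is integrable; if $\lambda_1/2\le \alpha<\lambda_1$ one instead keeps $e^{-(\lambda_1/2-\alpha)t}$ bounded or shrinks $\varepsilon$ — more cleanly, just use $e^{-\lambda_1 t/4}$-type slack), and obtain $\phi(t)\le C' + (2C_0 c_* M/\lambda_1)\sup_{[0,t]}\phi$, whence $\sup_t\phi<\infty$, i.e. \eqref{ylin1}. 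The only subtlety is the borderline $\alpha$ near $\lambda_1$; one resolves it by replacing $\lambda_1/2$ with a slightly smaller exponent using \eqref{eAe} with a small $\varepsilon$, which is harmless since $\alpha<\lambda_1$ strictly.

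For part \ref{Th2}, the argument is identical in structure but now the forcing decays like $\iln_m(t)^{-\alpha}$ rather than exponentially, so the relevant integral estimate is Lemma \ref{plnlem}: from \eqref{fsmallo} there is (after the usual continuity/boundedness remark, cf. \eqref{gbound}) a constant with $|f(t)|\le C\iln_m(t)^{-\alpha}$ for $t\ge T_1$, and Lemma \ref{plnlem} gives $\int_0^t e^{-\lambda_1(t-\tau)/2}\iln_m(T_1+\tau)^{-\alpha}\,\d\tau = \bigo(\iln_m(T_1+t)^{-\alpha})$. Setting $\psi(t)=\iln_m(T_1+t)^{\alpha}|y(T_1+t)|$, the quadratic term is again absorbed via $c_*|y|^2\le c_* M|y|$ and Lemma \ref{plnlem}, while the $e^{-\lambda_1 t/2}|y(T_1)|$ term is $\bigo(\iln_m(T_1+t)^{-\alpha})$ by \eqref{LLk}; one then reads off $\sup_t\psi<\infty$, which together with \eqref{Lshift} to pass from $\iln_m(T_1+t)$ back to $\iln_m(t)$ yields \eqref{tlin2}. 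The main obstacle throughout is the very first step — converting $\liminf|y(t)|=0$ into a genuine smallness-for-all-large-$t$ statement so that the quadratic nonlinearity can be treated as a small linear perturbation — and once that is in place the two parts are routine applications of the linear estimates \eqref{eA2}, \eqref{eAe} and Lemma \ref{plnlem}.
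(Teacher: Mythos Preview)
Your plan is correct, and the overall architecture---variation of constants starting from a late time where $|y|$ is small, then control of the nonlinear term via \eqref{Gyy} and of the forcing via \eqref{eAe}/Lemma~\ref{plnlem}---matches the paper's. The main methodological difference is that you run a two–step scheme: first establish $|y(t)|<M$ for all large $t$ (re-running the Theorem~\ref{thmsmall} continuation argument), and only then linearize $c_*|y|^2\le c_*M|y|$ and absorb via a Gr\"onwall-type bound on $\phi(t)=e^{\alpha t}|y(T_1+t)|$ or $\psi(t)=\iln_m(T_1+t)^{\alpha}|y(T_1+t)|$. The paper instead does a single continuation argument with the decay rate already built into the bootstrap hypothesis: it supposes $|z(t)|<M e^{-\alpha t}$ (resp.\ $|z(t)|<M\iln_m(T_*+t)^{-\alpha}$) on $[0,T')$, then shows the bound self-improves to $M/2$ times the same weight, forcing $T'=\infty$. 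This collapses your two steps into one and sidesteps the absorption coefficient issue you flagged (your displayed constant $2C_0c_*M/\lambda_1$ is not quite right once you weight by $e^{\alpha t}$; after using \eqref{eAe} the correct factor is $c_\varep c_*M/(\lambda_1-\varep-\alpha)$, which your parenthetical remarks already anticipate). In part~\ref{Th2} the paper also separates the choice of the base point $T_*$ (fixed first, so that the constants $C_*$, $C'_*$ from Lemma~\ref{plnlem} and \eqref{iine9} are fixed) from the further shift $T'_*$ chosen to make $|y|$ and $|f|$ small, exploiting monotonicity of $\iln_m$; in your version these two roles are conflated in $T_1$, which is fine but requires a moment's care to avoid circularity between $M$ and the Lemma~\ref{plnlem} constant. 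Both routes arrive at the same conclusion; the paper's is a bit more economical, yours a bit more modular.
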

\begin{proof}
The proof is similar to that of Theorem \ref{thmsmall} with some refinements.

\medskip\noindent
\textit{Part} (i). Take $\varep=(\lambda_1-\alpha)/2>0$. Then
\beqs
\alpha<\lambda_1-\varep=\alpha+\varep=\frac12(\lambda_1+\alpha)<\lambda_1.
\eeqs

Let $c_\varep$ be a positive number  such that inequality \eqref{eAe} holds true.
Let 
\beq\label{Me2}
M=\min \left\{ r_*,\frac{\varep}{6c_\varep c_*} \right\},\quad
\varep_0=\min \left \{\frac{M}{2},\frac{M}{6c_\varep } \right \},\quad 
\varep_1=\frac{\varep M}{6c_\varep }.
\eeq

Fix $T_0>T$. Because of \eqref{fbigo} and the continuity of $f$ on $[T_0,\infty)$, there is $C>0$ such that 
\beq\label{f2}
| f(T_*+t)|\le C e^{-\alpha(T_*+ t)}\quad \text{ for any $T_*\ge T_0$ and $t\ge 0$.}
\eeq

By the virtue of \eqref{limyy}, there is a sufficiently large  $T_*\in[T_0,\infty)$ such that 
$$|y(T_*)|<\varep_0\text{ and }C e^{-\alpha T_*}\le \varep_1.$$
From this choice of $T_*$ and \eqref{f2}, it follows that
\beq\label{fe1}
| f(T_*+t)|\le \varep_1 e^{-\alpha t}\quad \text{ for all }t\ge 0.
\eeq

Set $z(t)=y(T_*+t)$ and $\widetilde f(t)= f(T_*+t)$. Then 
\beq\label{ztilf}
z'(t)+Az(t)=G(z)+\widetilde f(t),\text{ for } t\in[0,\infty).
\eeq

By the variation of constants formula, one has, for all $t\ge 0$,
\beq\label{zfT}
z(t)=e^{-tA}y(T_*) + \int_0^t e^{-(t-\tau)A}(G(z(\tau))+\widetilde f(\tau))\d\tau.
\eeq

Suppose 
\beq\label{yest}
|z(t)|< Me^{-\alpha t} \quad\text{ for all $t\in[0,T')$, where $T'\in(0,\infty]$.}
\eeq

For $t< T'$, by using \eqref{zfT}, inequality \eqref{eAe}, property \eqref{Gyy},  \eqref{fe1} and \eqref{yest}, one can obtain, similar to \eqref{y1}  and \eqref{y3},  
\begin{align*}
|z(t)|
&\le c_\varep e^{-(\lambda_1-\varep)t}|y(T_*)|
+   \int_0^t c_\varep e^{-(\lambda_1-\varep)(t-\tau)}(c_* M^2e^{-2\alpha\tau}+\varep_1e^{-\alpha \tau})\d\tau\\
&\le c_\varep e^{-\alpha t}\varep_0
+ c_\varep  \int_0^t e^{-(\alpha+\varep)(t-\tau)}(c_* M^2+\varep_1)e^{-\alpha \tau}\d\tau.
\end{align*}
This and  our choice in \eqref{Me2} yield 
\beqs
|z(t)|
\le \left(c_\varep \varep_0+ \frac{c_\varep(c_* M^2+\varep_1)}{\varep}\right)e^{-\alpha t}
\le \left( \frac M 6+\frac M 6+\frac M 6\right) e^{-\alpha t}
=\frac M 2 e^{-\alpha t}.
\eeqs

By a standard contradiction argument, it can be shown that \eqref{yest} holds with $T'=\infty$.
By shifting the time variable, it follows from \eqref{yest} that  $y(t)$ satisfies \eqref{ylin1}.

\medskip\noindent
\textit{Part} (ii). Set $\psi=L_m$.
We choose $T_*>\max\{T,E_{m+1}(0)\}$.  Thanks to properties \eqref{Linc} and \eqref{Lone},  the function $t\mapsto\psi(T_*+t)$ is increasing on $[0,\infty)$, and   $\psi(T_*+t)\ge 1$ for all $t\ge 0$.

There is a constant $C'_*>0$ such that
\beq\label{iine9}
e^{-\lambda_1 t/2} \le C'_*\psi(T_*+t)^{-\alpha} \quad\text{for all }t\ge 0.
\eeq

By inequality \eqref{iine2}, there is $C_*>0$ such that
\beq\label{iine3}
 \int_0^t e^{-\lambda_1(t-\tau)/2}\psi(T_*+\tau)^{-\alpha}\d\tau
 \le C_* \psi(T_*+t)^{-\alpha} \quad\text{for all }t\ge 0.
\eeq

Let $C_0>0$ be as in \eqref{eA2}.
Let 
\beq\label{Me3}
M=\min \left\{ r_*,\frac{\varep}{6C_0 C_* c_*} \right\},\quad
\varep_0=\min \left \{\frac{M}{2},\frac{M}{6C_0 C'_* } \right \},\quad 
\varep_1=\frac{\varep M}{6C_0C_*}.
\eeq

Thanks to conditions \eqref{limyy} and \eqref{fsmallo}, we can choose $T'_*>0$ sufficiently large so that $|y(T_*+T'_*)|<\varep_0$  and 
\beq\label{fTe}
|f(T_*+T'_*+t)|\le \varep_1 \psi(T_*+T'_*+t)^{-\alpha}\quad\text{ for all $t\ge 0$.}
\eeq

Let $z(t)=y(T_*+T'_*+t)$ and $\widetilde f(t)=f(T_*+T'_*+t)$. Then $z(t)$ satisfies \eqref{ztilf} again.

Because of \eqref{fTe} and the increase of $\psi(T_*+t)$ in $t\ge 0$, we have
\beqs
|\widetilde f(t)|\le \varep_1 \psi(T_*+t)^{-\alpha}\quad \text{ for all $t\ge 0$.}
\eeqs

Suppose
\beq\label{zest3}
|z(t)|\le M \psi(T_*+t)^{-\alpha} \quad\text{ for all }t \in[0,T'), \text{ where }T'\in(0,\infty].
\eeq

Performing the same calculations as in part (i) with the use of \eqref{eA2} in place of \eqref{eAe}, and  inequality \eqref{iine9},  one has, for $t\in[0,T')$,
\begin{align*}
|z(t)|
&\le |e^{-tA}||y(T_*+T'_*)|+  \int_0^t |e^{-(t-\tau)A}|(c_* |z(\tau)|^2+|\widetilde f(\tau)|)\d\tau\\
&\le C_0e^{-\lambda_1 t/2}\varep_0+ C_0 \int_0^t e^{-\lambda_1(t-\tau)/2}(c_* M^2\psi(T_*+\tau)^{-2\alpha}+\varep_1\psi(T_*+\tau)^{-\alpha})\d\tau\\
&\le C_0 C'_* \psi(T_*+t)^{-\alpha} \varep_0+ C_0 \int_0^t e^{-\lambda_1(t-\tau)/2}(c_* M^2+\varep_1)\psi(T_*+\tau)^{-\alpha}\d\tau.
\end{align*}

Applying \eqref{iine3} to estimate the last integral, we derive
\beqs
|z(t)| \le C_0(C'_*\varep_0 +C_*c_*M^2+C_* \varep_1) \psi(T_*+t)^{-\alpha}.
\eeqs

Together with \eqref{Me3}, this inequality gives 
\beqs
|z(t)|\le \frac M 2 \psi(T_*+t)^{-\alpha} .
\eeqs

Then \eqref{zest3} holds true with  $T'=\infty$. By shifting the time and using property \eqref{Lshift}, we obtain \eqref{tlin2} from \eqref{zest3}.
\end{proof}

\begin{remark}
Following the proof of Theorem \ref{thmdecay}, we, in fact,  can replace condition \eqref{limyy} with a weaker one, namely, 
 \beqs
 \liminf_{t\to\infty}|y(t)|<\varep_0.
 \eeqs 
 
Another technical point is that the small oh condition \eqref{fsmallo} is slightly stricter than the corresponding big oh condition (3.13) in \cite[Theorem 3.2]{CaH3}. Nonetheless, the result still serves our proofs well in Sections \ref{eforce}--\ref{lforce}. 
\end{remark}

\section{Case of exponential decay}\label{eforce}

We study the system of nonlinear differential equations \eqref{sys-eq}.

\begin{assumption}\label{assumpf}
There exists a number $T_f\ge 0$ such that $f$ is continuous on $[T_f,\infty)$.
\end{assumption}

More specific conditions on $f$ will be specified later for each result.

\begin{assumption}\label{assumpu}
There exists a number $T_0\ge 0$ such that $y\in C^1((T_0,\infty))$ is a solution of \eqref{sys-eq} on $(T_0,\infty)$,  and $y(t)\to 0$ as $t\to\infty$.
\end{assumption}

The main assumption on $f$ for this section is the following.

\begin{assumption} \label{fEas}
The function $f(t)$ admits the asymptotic expansion, in the sense of Definition \ref{EEdef} with  $X=\C^n$,
\beq\label{fmu}
f(t) \sim \sum_{k=1}^\infty f_k(t), \text{ where $f_k\in \mathcal F_E(-\mu_k,\C^n)$ for $k\in\N$, }
\eeq
with $(\mu_k)_{k=1}^\infty$ being a divergent, strictly increasing sequence of positive numbers.
Moreover, the set $\mathcal S\eqdef\{\mu_k:k\in\N\}$ preserves the addition and contains $\Re\sigma(A)$.
\end{assumption}

Note from the last condition of Assumption \ref{fEas} that $\mu_1\le \lambda_1$.

Under Assumption \ref{fEas}, denote 
\beqs
\bar f_N(t)=\sum_{k=1}^N f_k(t).
\eeqs 

Then, according to Definition \ref{EEdef}, for any $N\in\N$, one has
\beq \label{fN}
|f(t)-\bar f_N(t)|
=\Big|f(t)-\sum_{k=1}^N f_k(t)\Big|=\bigo(\psi(t)^{-\mu_{N}-\varep_N})\text{ for some } \varep_N>0.
\eeq

The following Scenarios \ref{scen1}--\ref{scen3} are typical cases that Assumption \ref{fEas} holds true.
They are of the same nature as Scenarios 4.6--4.8 in \cite{CaH3}.

\begin{scenario}\label{scen1}
Suppose the forcing function has the following expansion,  in the sense of Definition \ref{EEdef},
\beq\label{falpha}
f(t)\sim \sum_{k=1}^\infty \widetilde f_k(t), \text{ where $\widetilde f_k\in\mathcal F_E(-\alpha_k,\C^n)$ for $k\in\N$, }
\eeq
and $(\alpha_k)_{k=1}^\infty$ is a divergent, strictly increasing sequence of positive numbers.

We define $\mathcal S$ to be the additive semigroup generated by the real parts of $\Lambda_j$'s and $\alpha_j$'s, i.e.,
\beq\label{S1}
\mathcal S=\big\langle \Re \{ \Lambda_j,\alpha_\ell: 1\le j\le n,\ell\in \N  \}\big\rangle.
\eeq

We can re-arrange the set $\mathcal S$ as a sequence $(\mu_k)_{k=1}^\infty$ which is divergent and strictly increasing. The set $\mathcal S$ and sequence $(\mu_k)_{k=1}^\infty$ satisfy the properties in Assumption \ref{fEas}. 
Note that $\mathcal S$ contains the set $\{\alpha_k:k\in\N\}$, and, hence, $(\alpha_k)_{k=1}^\infty$ is a subsequence of $(\mu_k)_{k=1}^\infty$.
Then one can formally rewrite the sum in \eqref{falpha}, after re-indexing $\widetilde f_k$'s, as the sum in \eqref{fmu}, and verify that expansion \eqref{fmu}, indeed, holds true in the sense of Definition \ref{EEdef}.
\end{scenario}

\begin{scenario}\label{scen2}
Suppose $f(t)$ has the finite asymptotic expansion, in the sense of Definition \ref{EEdef}\ref{EE2},
\beq\label{ffin}
f(t)\sim \sum_{k=1}^N \widetilde f_k(t), \text{ where $\widetilde f_k\in\mathcal F_E(-\alpha_k,\C^n)$ for $1\le k\le N$.}
\eeq

Define the set $\mathcal S$ by formula \eqref{S1} with  $1\le \ell\le N$. This set $\mathcal S$ is still infinite, and can be arranged as sequence $(\mu_k)_{k=1}^\infty$ as in Scenario \ref{scen1}. Then, again, we can obtain \eqref{fmu} from  \eqref{ffin}.
\end{scenario}

\begin{scenario}\label{scen3}
Consider the case when the function $f$ is zero, or, more generally, decays faster than any exponential functions, that is, $e^{\alpha t} f(t)\to 0$ as $t\to\infty$ for any $\alpha>0$. 

Let $\mathcal S=\big\langle  \Re\sigma(A) \big\rangle$.
Again, arrange $\mathcal S$ as the sequence $(\mu_k)_{k=1}^\infty$. Then we have expansion \eqref{fmu} with $f_k=0$ for all $k\in\N$.
\end{scenario}

The main result of this section is the following.

\begin{theorem}\label{mainthm}
Under  Assumptions \ref{assumpf}, \ref{fEas} and \ref{assumpu},
 there exist functions
 \beq\label{yFE} 
 y_k\in \mathcal F_E(-\mu_k,\C^n) \text{ for $k\in\N$,}
 \eeq 
  such that the solution $y(t)$ admits the asymptotic expansion
   \beq \label{yykex}
  y(t)\sim \sum_{k=1}^\infty y_k(t) \text{  in the sense of Definition \ref{EEdef}.}
  \eeq 
  
 Moreover,  for each $k\in\N$, the functions $y_k(t)$ solves the following equation
\beq \label{ykeq}    
y_k'+ Ay_k  
=\sum_{m\ge 2}\ \sum_{\mu_{j_{1}}+\mu_{j_{2}}+\ldots \mu_{j_{m}}=\mu_k} \mathcal G_m(  y_{j_{1}}, y_{j_{2}},\ldots ,  y_{j_{m}}) +f_k, \text{ for }t\in\R.
\eeq  
\end{theorem}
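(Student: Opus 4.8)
\textbf{Proof strategy for Theorem \ref{mainthm}.}
The plan is to construct the functions $y_k$ recursively in $k$ and then verify by induction on $N$ the remainder estimate that defines the asymptotic expansion \eqref{yykex}. First I would fix the sequence $(\mu_k)$ and the semigroup $\mathcal S$ from Assumption \ref{fEas}, and note that the right-hand side of \eqref{ykeq} only involves $y_{j}$ with $j<k$: indeed each index $j_\ell$ appearing in a product $\mu_{j_1}+\cdots+\mu_{j_m}=\mu_k$ with $m\ge 2$ must satisfy $\mu_{j_\ell}<\mu_k$, hence $j_\ell<k$, because all $\mu_j$ are positive. Thus the definition of $y_k$ via \eqref{ykeq} is well-posed once $y_1,\dots,y_{k-1}$ are known. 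For each $k$, the right-hand side of \eqref{ykeq} is a finite sum: finitely many $m$ contribute (since $m\mu_1\le\mu_k$ forces $m\le\mu_k/\mu_1$), and for each $m$ there are finitely many admissible index tuples. By Lemma \ref{Pprop}\ref{P1} (applied with the multilinear maps $\mathcal G_m$) together with closure of $\mathcal F_E$ under the relevant products — a product $y_{j_1}\cdots y_{j_m}$ of factors in $\mathcal F_E(-\mu_{j_\ell},\C^n)$ lands in $\mathcal F_E(-\mu_k,\C^n)$ when $\sum\mu_{j_\ell}=\mu_k$, which one checks directly from the form \eqref{gEform} since $e^{\lambda_1 t}\cdots e^{\lambda_m t}=e^{(\lambda_1+\cdots+\lambda_m)t}$ and $\Re(\lambda_1+\cdots+\lambda_m)=-\mu_k$ — the whole right-hand side of \eqref{ykeq}, together with $f_k\in\mathcal F_E(-\mu_k,\C^n)$, belongs to $\mathcal F_E(-\mu_k,\C^n)$. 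One then invokes a standard particular-solution construction: given $F\in\mathcal F_E(-\mu_k,\C^n)$, there is $y_k\in\mathcal F_E(-\mu_k,\C^n)$ solving $y_k'+Ay_k=F$; this is exactly the computation already carried out inside the proof of Theorem \ref{approx1} (the formulas for $z_\lambda$, e.g. \eqref{zsum1}, exhibit such a solution by the integral formula \eqref{intep}), so I would cite that construction rather than redo it.

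Next I would set up the induction on $N$ that establishes \eqref{yykex}, i.e. that for every $N$ there is $\mu>\mu_N$ with $|y(t)-\sum_{k=1}^N y_k(t)|=\bigo(e^{-\mu t})$. Write $w_N(t)=y(t)-\sum_{k=1}^N y_k(t)$. Subtracting the equations \eqref{ykeq} for $k=1,\dots,N$ from \eqref{sys-eq}, and expanding $G(y)$ using Assumption \ref{assumpG}\ref{aG2}, one gets
\beq\label{wNeq}
w_N'+Aw_N = \Big(G(y)-\sum_{m=2}^{M}G_m(y)\Big) + \Big(\sum_{m=2}^{M}\mathcal G_m(y^{(m)})-\sum_{k=1}^N[\text{RHS of }\eqref{ykeq}]_{\text{nonlinear}}\Big) + \Big(f-\sum_{k=1}^N f_k\Big),
\eeq
for a suitably large truncation index $M$ to be chosen depending on $N$. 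The last bracket is $\bigo(e^{-\mu_N t-\varep_N t})$ for some $\varep_N>0$ by Assumption \ref{fEas} / \eqref{fN}. The middle bracket is a multilinear-algebra bookkeeping term: substituting $y=\sum_{k=1}^N y_k + w_N$ into each $\mathcal G_m(y^{(m)})$, expanding by multilinearity, the terms all of whose factors are among $y_1,\dots,y_N$ and whose exponent sum equals some $\mu_k\le\mu_N$ cancel exactly against $\sum_k[\cdots]$; every surviving term either contains at least one factor $w_N$ (hence is controllable by the inductive decay bound on $w_{N'}$ for appropriate $N'\le N$ — more precisely, using that $w_{N'}(t)\to0$ and refined decay) or is a pure product of $y_k$'s whose total decay exponent strictly exceeds $\mu_N$ and lies in $\mathcal S$ (here the closure of $\mathcal S$ under addition and containment of $\Re\sigma(A)$ in $\mathcal S$ is used). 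Choosing $M$ large enough that $G(y)-\sum_{m=2}^M G_m(y)=\bigo(|y|^{M+\delta})=\bigo(e^{-(M+\delta)\lambda_1 t/2})$ via Theorem \ref{thmdecay}\ref{Th1} (first apply it to get $y(t)=\bigo(e^{-\alpha t})$ for $\alpha$ slightly below $\lambda_1$, then bootstrap), and noting $M\lambda_1/2$ can be made $>\mu_N$, the first bracket is negligible too. The upshot: the forcing term in \eqref{wNeq} is $\bigo(e^{-(\mu_N+\sigma)t})$ for some $\sigma>0$, modulo terms of the form (decaying faster than $e^{-\mu_N t}$) times (a factor $w_{N'}$).

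To close the induction I would argue in two stages. Stage one: from the inductive hypothesis $w_{N-1}(t)=\bigo(e^{-\mu'_{N-1}t})$ with $\mu'_{N-1}>\mu_{N-1}$ (and from Theorem \ref{thmsmall}/\ref{thmdecay} for the base case), together with the fact that $y_N\in\mathcal F_E(-\mu_N,\C^n)$ has decay $\bigo(e^{-(\mu_N-\delta)t})$ for every $\delta>0$ by \eqref{fEorder}, deduce a preliminary bound $w_N(t)=\bigo(e^{-\beta t})$ for $\beta=\min\{\mu'_{N-1},\mu_N\}-\delta$; this feeds back into \eqref{wNeq} to show its forcing is genuinely $\bigo(e^{-(\mu_N+\sigma)t})$ for some $\sigma>0$ once $\delta$ is taken small. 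Stage two: apply Theorem \ref{approx1} to equation \eqref{wNeq} with the roles $\mu\rightsquigarrow\mu_N+\sigma$, $f\rightsquigarrow 0$ (the coherently-decaying part of the forcing at level $\mu_N+\sigma$ is zero — all of $f$'s expansion up through $\mu_N$ has been subtracted, and we only need decay strictly past $\mu_N$), and $g\rightsquigarrow$ the whole forcing of \eqref{wNeq}; the hypothesis \eqref{ylimcond} on $w_N$ holds because $w_N(t)\to0$ faster than any $e^{-\lambda t}$ with $\lambda\le\mu_N<\mu_N+\sigma$ as just established, and $\Re\sigma(A)\subset\mathcal S$ ensures the relevant $\lambda$'s are handled. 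Theorem \ref{approx1} then yields $z\in\mathcal F_E(-(\mu_N+\sigma),\C^n)$ with $w_N(t)=z(t)+\bigo(e^{-(\mu_N+\sigma+\varep)t})$; but $z$ solves $z'+Az=0$, forcing $z\equiv0$ (its coefficients satisfy a homogeneous recursion with no eigenvalue resonance at real part $-(\mu_N+\sigma)$ unless $z=0$ — alternatively redefine $y_{N+1},\dots$ to absorb $z$, but the cleaner route is that the particular solution at a level with no matching forcing and no resonance is trivial). Hence $w_N(t)=\bigo(e^{-(\mu_N+\sigma+\varep)t})$, completing the induction.

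\textbf{Main obstacle.} The delicate point is the bookkeeping in the middle bracket of \eqref{wNeq}: one must verify that, after substituting the truncated expansion plus remainder into all the nonlinear terms $\mathcal G_m$, the ``resonant'' contributions at every level $\mu_k\le\mu_N$ cancel exactly against the right-hand sides of the defining equations \eqref{ykeq}, while every leftover term is provably $o(e^{-\mu_N t})$ — this requires that $\mathcal S$ be closed under addition (so that products of $y_k$'s have exponents back in $\mathcal S$, hence bounded below by the next $\mu$ past $\mu_N$) and that $\Re\sigma(A)\subset\mathcal S$ (so the homogeneous solutions generated along the way also have exponents in $\mathcal S$). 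Keeping the indices straight across the recursion — in particular ensuring no circular dependence, which follows from positivity of the $\mu_k$'s as noted above — is the part that demands care rather than cleverness.
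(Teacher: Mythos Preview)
Your approach has a genuine gap. By pre-defining each $y_k$ as an \emph{arbitrary} particular solution of \eqref{ykeq}, chosen independently of the actual solution $y$, you lose the freedom to absorb the homogeneous contributions of $y$ at the resonant levels $\mu_k\in\Re\sigma(A)$. Concretely: take $\mu_1=\lambda_1\in\Re\sigma(A)$ and $f_1=0$. Your ``standard particular-solution construction'' (formula \eqref{zsum1} and its analogues) produces $y_1=0$, yet the true solution $y(t)$ generically has a leading term $e^{-\Lambda_j t}\xi$ with $\Re\Lambda_j=\mu_1$. Then $w_1=y-y_1$ decays only like $e^{-\mu_1 t}$, not strictly faster, and the remainder estimate at level $N=1$ already fails. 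This is exactly why your Stage-two verification of condition \eqref{ylimcond} breaks down: Stage one gives only $w_N=\bigo(e^{-\beta t})$ with $\beta<\mu_N$, so when $\mu_N\in\Re\sigma(A)$ (possible, since $\Re\sigma(A)\subset\mathcal S$) the limit $t^m e^{\mu_N t}|w_N(t)|\to 0$ is not available. Moreover, your claim that $z'+Az=0$ with $z\in\mathcal F_E(-(\mu_N+\sigma),\C^n)$ forces $z\equiv 0$ is false whenever $\mu_N+\sigma\in\Re\sigma(A)$.

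The paper does not decouple the construction of $y_k$ from the remainder estimate. It applies Theorem \ref{approx1} directly to $v_{N}=y-\sum_{k\le N}y_k$ at level $\mu_{N+1}$, and \emph{defines} $y_{N+1}$ to be the output $z$ of that theorem; by construction $z$ satisfies both \eqref{ykeq} (for $k=N+1$) and $|v_N-y_{N+1}|=\bigo(e^{-(\mu_{N+1}+\delta_{N+1})t})$. The key is that in the resonant Case~2 of Theorem \ref{approx1} the function $z$ contains the term $e^{-tB}(y_0+Y_0)$ (see \eqref{zdef2}), which depends on the initial data of $v_N$ and hence on $y$ itself --- this is precisely the homogeneous piece your construction omits. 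Condition \eqref{ylimcond} is then checked using the induction hypothesis $v_N=\bigo(e^{-(\mu_N+\delta_N)t})$, which genuinely beats $e^{-\mu_N t}$. Your parenthetical ``alternatively redefine $y_{N+1},\dots$ to absorb $z$'' is in fact the correct route, not the alternative one.
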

 
The followings are clarifications for equation \eqref{ykeq}.
\begin{enumerate}[label={\rnum}]
 \item\label{Ta}  The indices $j_1,j_2,\ldots,j_m$ are taken to be in $\N$.
The double summation is zero if there are no indices that satisfy the stated conditions.

\item \label{T1}
When $k=1$, equation \eqref{ykeq} reads as
\begin{align} \label{y1eq}    
y_1'+Ay_1  = f_1.
\end{align}

\item\label{Tc}  We recall identity (4.20) of \cite{CaH3}, which states,  for any numbers $M\ge \mu_k/\mu_1$ and $Z\ge k-1$, that
 \beq\label{sumequiv}
\sum_{2\le m\le M}\sum_{\substack{{1\le j_{1},j_{2},\ldots,j_{m}\le Z}\\ \mu_{j_{1}}+\mu_{j_{2}}+\ldots \mu_{j_{m}}=\mu_k}}
=\sum_{m\ge 2}\sum_{\mu_{j_{1}}+\mu_{j_{2}}+\ldots \mu_{j_{m}}=\mu_k}.
\eeq

One can see from \eqref{sumequiv} that the double summation in \eqref{ykeq} is a finite sum.

\item\label{Tb} Regarding the double summation in \eqref{ykeq}, note that $m\ge 2$ and $\mu_{j_\ell}>0$ for $\ell=1,2,\ldots,m$. Hence,  $\mu_{j_\ell}<\mu_k$ which implies $j_\ell\le k-1$. Therefore, the terms $y_{j_\ell}$'s in the summand come from the previous steps.
 
\item\label{Te} Thanks to the previous properties \ref{T1} and \ref{Tb}, equation \eqref{ykeq} is actually a recursive expression.
\end{enumerate}
 
\begin{proof}[Proof of Theorem \ref{mainthm}]
Set $\psi(t)=e^t$ in this proof.

For $N\in\N$, denote by ($\mathcal T_N$) the statement: 
\textit{There exist functions $y_k\in \mathcal F_E(-\mu_k,\C^n)$, for  $k=1,2,\ldots,N$,  such that
\beq\label{hypoeq}
\text{ equation \eqref{ykeq} holds true on $\R$ for $k=1,2,\ldots,N$, }
\eeq
and
\beq\label{yremN}
\left|y(t)-\sum_{k=1}^N y_k(t) \right|=\bigo(\psi(t)^{-\mu_N-\delta_N})
\text{ for some $\delta_N>0$. }
\eeq
}

We will prove, by induction, that ($\mathcal T_N$) holds true for all $N\in\N$.
In the calculations below, $t$ will be sufficiently large.

\medskip
\noindent\textbf{First step.} Let  $N=1$.
By the triangle inequality, \eqref{fN} with $N=1$ and the fact $f_1\in \mathcal F_E(-\mu_1,\C^n)$, one has
\beqs
|f(t)| \le |f(t)- f_1(t)| + |f_1(t)| =\bigo(\psi(t)^{-\mu_1 - \varep_1}) + o(\psi(t)^{-\mu_1+\delta})
\eeqs
for all $\delta>0.$ This yields
\beq\label{ffirst} 
f(t)= o( \psi(t)^{-\mu_1+\delta}) \quad \forall \delta>0.
\eeq 

Applying Theorem \ref{thmdecay}\ref{Th1} yields 
\beq\label{u-first}
y(t)=\bigo(\psi(t)^{-\mu_1+\delta})\quad\forall\delta\in(0,\mu_1), \text{ hence, }\forall \delta>0.
\eeq

Select $\delta<\mu_1$ in \eqref{u-first} so that $2(\mu_1-\delta)>\mu_1+\varep_1$. 
Thanks to \eqref{u-first}, $|y(t)|<r_*$ for $t$ sufficiently large, hence, we can apply inequality \eqref{Gyy} to $x=y(t)$ and have
\beqs
G(y(t))=\bigo(|y(t)|^2)=\bigo(\psi(t)^{-2(\mu_1-\delta)})=\bigo(\psi(t)^{-\mu_1-\varep_1}).
\eeqs

We rewrite equation \eqref{sys-eq} as
 \beq\label{yf1}
 y'(t)+Ay(t)=f_1(t)+[G(y(t))+(f(t)-f_1(t))]=f_1(t)+\bigo(\psi(t)^{-\mu_1-\varep_1}).
 \eeq

By the virtue of Theorem \ref{approx1} applied to equation \eqref{yf1}, there exist a function $y_1\in\mathcal F_E(-\mu_1,\C^n)$ and a number $\delta_1 >0$ such that $y_1(t)$ satisfies
\eqref{y1eq}, which is \eqref{ykeq} for $k=1$, on $\R$, and
\beqs
  |y(t)-y_1(t)|= \bigo(\psi(t)^{-\mu_1-\delta_1 }).
\eeqs

Therefore, statement ($\mathcal T_1$) holds true. Notice that we did not check condition \eqref{ylimcond} because  $\Re\Lambda_j\ge \mu_1$ for all $j$.

\medskip
\noindent\textbf{Induction step.} Let $N\ge 1$.
Suppose there are function $y_k\in \mathcal F_E(-\mu_k,\C^n)$, for $1\le k\le N$, such that \eqref{hypoeq} and \eqref{yremN} are true.

Let
\beq \label{ys}
u_N(t)=\sum_{k=1}^N y_k(t) \text{ and } 
v_N(t)=y(t)-u_N(t).
\eeq 

The assumption \eqref{yremN} reads as
\beq\label{vNrate}
v_N(t)=\bigo(\psi(t)^{-\mu_N-\delta_N}).
\eeq
For $1\le k\le N+1$, define
\beq\label{Jk}
\mathcal J_k(t)=\sum_{m\ge 2}\ \sum_{\mu_{j_1}+\mu_{j_2}+\ldots \mu_{j_m}=\mu_k} \mathcal G_m(y_{j_1}(t),y_{j_2}(t),\ldots , y_{j_m}(t)).
\eeq

We claim that $v_N(t)$ satisfies
\beq\label{vNE}
v_N'(t)+Av_N(t) =f_{N+1}(t)
 +  \mathcal J_{N+1}(t)
 +\bigo(\psi(t)^{-\mu_{N+1}-\delta_*} ),
\eeq
for some number $\delta_*>0$.

We accept \eqref{vNE} momentarily. By  the fact that $y_{j_\ell}\in \mathcal F_E(-\mu_{j_\ell},\C^n)$ for $\ell=1,2,\ldots,m$ and $\mathcal G_m$ being multi-linear, it is clear that 
$$\mathcal G_m( y_{j_{1}}(t), y_{j_{2}}(t),\ldots , y_{j_{m}}(t))\in \mathcal F_E\Big(-\sum_{\ell=1}^m\mu_{j_\ell},\C^n\Big).$$

As a consequence,  $\mathcal J_{N+1}(t)\in\mathcal F_E(-\mu_{N+1},\C^n)$.
 We apply Theorem \ref{approx1} to  equation \eqref{vNE} and solution $v_N(t)$.

We check condition \eqref{ylimcond} for $\mu=\mu_{N+1}$. Suppose $\lambda=\Re\Lambda_j<\mu_{N+1}$.

Since $\Lambda_j$ is an eigenvalue of $A$, we have, thanks to the last condition of Assumption \ref{fEas}, $\lambda\in\mathcal S=\{\mu_k:k\in\N\}$.   This fact, together with $\lambda<\mu_{N+1}$ and $(\mu_k)_{k=1}^\infty$ being strictly increasing implies that $\lambda\le\mu_N$.
Combining this upper bound of $\lambda$ with the estimate of $v_N(t)$ in \eqref{vNrate}, we see that condition \eqref{ylimcond} is met for $y=v_N$ and any $m\in\N$. 

Then, by the virtue of Theorem \ref{approx1}, there exist a function $y_{N+1}\in \mathcal F_E(-\mu_{N+1},\C^n)$
 and a number $\delta_{N+1}>0$ such that
\beqs
|v_N(t)-y_{N+1}(t)|=\bigo(\psi(t)^{-\mu_{N+1}-\delta_{N+1}})
\eeqs
and 
\beqs
y_{N+1}'(t)+Ay_{N+1}(t)=f_{N+1}(t)+ \mathcal J_{N+1}(t)\text{ for }t\in\R.
\eeqs

Thus, the statement ($\mathcal T_{N+1}$) holds true.

\medskip
\noindent\textbf{Conclusion of the proof of ($\mathcal T_N$).} By the Induction Principle, the statement  ($\mathcal T_N$) is true for all $N\in\N$. 

\medskip
Note, for each $N\in\N$, that the function $y_{N+1}$ is constructed without changing the previous $y_k$ for $1\le k \le N$. Therefore, the functions $y_k$'s exist for all $k\in\N$, and,  for any $N\in\N$,   \eqref{hypoeq} and \eqref{yremN} in  ($\mathcal T_N$) hold true.
Consequently, we obtain the asymptotic expansion \eqref{yykex} with the functions $y_k$'s satisfying \eqref{ykeq}.

\medskip
It remains to prove  equation \eqref{vNE}. Its proof is divided into three parts (a)--(c).

\medskip
\noindent \textit{Part} (a). Note from \eqref{fEorder} that 
\beq\label{ykdelta}
|y_k(t)|=\bigo(\psi(t)^{-\mu_k+\delta})\quad\forall \delta>0, \quad k=1,2,\ldots,N.
\eeq
Then
\beq\label{yNbrate}
|u_N(t)|\le \sum_{k=1}^N |y_k(t)|=\bigo(\psi(t)^{-\mu_1+\delta})\quad\forall \delta>0.
\eeq

Taking derivative of $v_N(t)$ and using \eqref{sys-eq} give
\beqs
v_N'
=y'-\sum_{k=1}^N y_k'
=-Ay +G(y)+f(t) - \sum_{k=1}^N y_k'.
\eeqs 

By writing
\beqs
Ay=\sum_{k=1}^N Ay_k+Av_N\text{ and }
f(t)= \sum_{k=1}^N  f_k(t)+f_{N+1}(t)+\bigo(\psi(t)^{-\mu_{N+1}-\varep_{N+1}}),
\eeqs
 we have
\beq\label{vN4}
v_N'
= -Av_N+G(y)+f_{N+1}(t)- \sum_{k=1}^N \Big(Ay_k+y_k' -f_k(t)\Big)+\bigo(\psi(t)^{-\mu_{N+1}-\varep_{N+1}}).
\eeq 

By the induction hypothesis,
$y_k'=-Ay_k + \mathcal J_k(t)+f_k(t)$ for $k=1,\ldots,N$.
Hence, we obtain from \eqref{vN4} that
\beq\label{vNeq1}
v_N'=-Av_N+f_{N+1} +G(y) - \sum_{k=1}^N \mathcal J_k(t) + \bigo(\psi(t)^{-\mu_{N+1}-\varep_{N+1}}) .
\eeq

\medskip
\noindent \textit{Part} (b). We calculate $G(y(t))$. 
Letting $\delta=\mu_1/2$ in \eqref{u-first} yields
\beq \label{yr}
y(t)= \bigo( \psi(t)^{-\mu_1/2}).
\eeq

Let integer 
\beq\label{MN}
M_{N+1}\ge 2\mu_{N+1}/\mu_1.
\eeq

Note that $ M_{N+1}\ge 2$. By \eqref{Grem1}, there exists  $\theta_N>0$ such that
 \beq\label{Grem3}
 \Big|G(y)-\sum_{m=2}^{M_{N+1}} G_m(y)\Big|=\bigo(|y|^{M_{N+1}+\theta_N})\text{ as } y\to 0.
 \eeq
 
We calculate and estimate, using \eqref{Grem3} and \eqref{yr},
\begin{align*}
G(y(t))
&= \sum_{m=2}^{M_{N+1}} G_m(y(t)) + \bigo(|y(t)|^{M_{N+1}+\theta_N})\\
&=  \sum_{m=2}^{M_{N+1}} G_m(y(t)) +  \bigo(\psi(t)^{-(M_{N+1}+\theta_N)\mu_1/2}).
\end{align*}
Thus, thanks to \eqref{MN},
\beq\label{preG}
G(y(t))
= \sum_{m=2}^{M_{N+1}} G_m(y(t)) +  \bigo\left(\psi(t)^{-\mu_{N+1}-\theta_N\mu_1/2}\right).
\eeq

For each $G_m(y(t))$ in \eqref{preG}, we rewrite it, using \eqref{GG}, as 
\beq\label{GGm}
G_m(y(t))= G_m(u_N+v_N) 
 =\mathcal G_m((u_N+v_N)^{(m)}).
\eeq 

By the multi-linearity of $\mathcal G_m$ and inequality \eqref{multineq}, we have
\begin{align*}
G_m(y(t))
 &=\mathcal G_m(u_N^{(m)})+\mathcal G_m(v_N^{(m)})+\sum_{k=1}^{m-1}\bigo(|u_N(t)|^k|v_N(t)|^{m-k})\\
 &=\mathcal G_m(u_N^{(m)}) +\bigo(|v_N(t)|^2) +\bigo(|u_N(t)| |v_N(t)|).
\end{align*}

The last two terms are estimated, by using \eqref{vNrate} and \eqref{yNbrate} with $\delta=\delta_N/2$, by
\beq\label{OO}
\bigo(\psi(t)^{-2(\mu_N+\delta_N)})+\bigo(\psi(t)^{-\mu_1+\delta_N/2}\psi(t)^{-\mu_N-\delta_N }).
\eeq

Because $2\mu_N$ and $\mu_N+\mu_1$ are two numbers in $\mathcal S$ which are greater than $\mu_N$, they, in fact, are greater or equal to $\mu_{N+1}$.
Thus, the quantities in \eqref{OO} can be estimated further by
$$\bigo(\psi(t)^{-\mu_{N+1}-2\delta_N})+\bigo(\psi(t)^{-\mu_{N+1}-\delta_N/2}).$$

Therefore, we obtain
\beq\label{Gmb}
 G_m(y(t))=\mathcal G_m((u_N(t))^{(m)})+\bigo(\psi(t)^{-\mu_{N+1}-\delta_N/2}).
\eeq

Summing up \eqref{Gmb} in $m$ and combining with \eqref{preG}, we obtain
\beq\label{psG}
G(y(t))
= \sum_{m=2}^{M_{N+1}} \mathcal G_m((u_N(t))^{(m)})+\bigo(\psi(t)^{-\mu_{N+1}-\delta_N/2 })+  \bigo\left(\psi(t)^{-\mu_{N+1}-\theta_N\mu_1/2}\right).
\eeq

We continue to manipulate
\beq\label{sGm}
\sum_{m=2}^{M_{N+1}} \mathcal G_m((u_N(t))^{(m)})
=  \sum_{m=2}^{M_{N+1}}\sum_{1\le j_{1},j_{2},\ldots,j_{m}\le N} \mathcal G_m( y_{j_{1}}(t), y_{j_{2}}(t),\ldots , y_{j_{m}}(t)).
\eeq 

Note from \eqref{multineq}, the fact that each $y_{j_\ell}\in \mathcal F_E(-\mu_{j_\ell},\C^n)$ and the estimates in \eqref{ykdelta} that
\beq\label{Gq}
|\mathcal G_m( y_{j_{1}}(t), y_{j_{2}}(t),\ldots , y_{j_{m}}(t))|
\le \|\mathcal G_m\| \cdot \prod_{\ell=1}^m |y_{j_\ell}(t)|=\bigo\left(\psi(t)^{-(\mu_{j_1}+\mu_{j_2}+\ldots+\mu_{j_m})+\delta}\right)
\quad\forall \delta>0.
\eeq

Thanks to Assumption \ref{fEas}, the set $\mathcal S$ preserves the addition. Hence, the sum 
$$\mu_{j_{1}}+\mu_{j_{2}}+\ldots \mu_{j_{m}}\text{ belongs to $\mathcal S$,}$$
 and, thus, it must be $\mu_k$ for some $k\ge 1$.
Therefore, we can split the sum in \eqref{sGm} into three parts:
\beqs 
\mu_{j_{1}}+\mu_{j_{2}}+\ldots \mu_{j_{m}}=\mu_k\text{ for }
k\le N,\ 
k=N+1\text{ and }k\ge N+2.
\eeqs 

Corresponding to the last part, i.e., $\mu_k\ge \mu_{N+2}$, taking into account \eqref{Gq}, the summand in \eqref{sGm} is 
\beqs 
\mathcal G_m( y_{j_{1}}(t), y_{j_{2}}(t),\ldots , y_{j_{m}}(t))
=\bigo(\psi(t)^{-\mu_{N+2}+\delta })
\quad\forall \delta>0.
\eeqs 
Thus, we rewrite \eqref{sGm} as
\beq\label{sGm2}
\sum_{m=2}^{M_{N+1}} \mathcal G_m((u_N(t))^{(m)})
= \sum_{k=1}^{N+1} Q_k(t)  +\bigo(\psi(t)^{-\mu_{N+2}+\delta })\quad\forall \delta>0,
\eeq 
where, for $1\le k\le N+1$, 
\beqs
Q_k(t)= \sum_{m=2}^{M_{N+1}}\sum_{\substack{{1\le j_{1},j_{2},\ldots,j_{m}\le N}\\ \mu_{j_{1}}+\mu_{j_{2}}+\ldots \mu_{j_{m}}=\mu_k}} \mathcal G_m(y_{j_{1}}(t),y_{j_{2}}(t),\ldots ,q_{j_{m}}(t))
\quad\text{ for $t\in\R$.}
\eeqs

Therefore, combining  \eqref{psG} with  \eqref{sGm2} for $\delta=(\mu_{N+2}-\mu_{N+1})/2$  yields
\beq\label{Gy}
G(y(t))= \sum_{k=1}^{N+1} Q_k(t)+\bigo(\psi(t)^{-\mu_{N+1}-\delta_{N+1}'}),
\eeq
where $\delta_{N+1}'=\min\{\delta_N/2,\theta_N\mu_1/2, (\mu_{N+2}-\mu_{N+1})/2\}>0$.

\medskip
\noindent \textit{Part} (c). 
For $1\le k\le N+1$, we note that $N\ge k-1$, and, thanks to \eqref{MN}, $M_{N+1}>\mu_{N+1}/\mu_1>\mu_k/\mu_1$.
By relation \eqref{sumequiv},
\beqs 
\sum_{m=2}^{M_{N+1}}\sum_{\substack{{1\le j_{1},j_{2},\ldots,j_{m}\le N}\\ \mu_{j_{1}}+\mu_{j_{2}}+\ldots \mu_{j_{m}}=\mu_k}}
=\sum_{m\ge 2}\sum_{\mu_{j_{1}}+\mu_{j_{2}}+\ldots \mu_{j_{m}}=\mu_k}.
\eeqs 
Thus,  we have
\beq \label{QJ}
Q_k= \sum_{m\ge2}\sum_{\mu_{j_{1}}+\mu_{j_{2}}+\ldots \mu_{j_{m}}=\mu_k} \mathcal G_m(y_{j_{1}},y_{j_{2}},\ldots ,y_{j_{m}})=\mathcal J_k
\quad\text{ for }1\le k\le N+1.
\eeq

By \eqref{vNeq1}, \eqref{Gy} and \eqref{QJ}, we have 
\begin{align*}
v_N'+Av_N
&= f_{N+1}(t)+  \sum_{k=1}^{N+1} \mathcal J_k-\sum_{k=1}^N \mathcal J_k +\bigo(\psi(t)^{-\mu_{N+1}-\varep_{N+1}})+\bigo(\psi(t)^{-\mu_{N+1}-\delta_{N+1}'}).
\end{align*}

Therefore, we obtain the desired equation \eqref{vNE} with $\delta_*=\min\{\varep_{N+1},\delta_{N+1}'\}>0$, and completes the proof of Theorem \ref{mainthm}.
\end{proof}

\begin{remark}
 In the case $G$ has only a finite sum approximation and/or $f(t)$ has a finite sum approximation, the solution $y(t)$ admits a corresponding finite sum approximation, see the treatments in \cite[Theorem 2.6]{HM2}, \cite[Section 4.1]{CaH1} and \cite[Theorem 5.1]{CaHK1}.
\end{remark}

\section{Case of power-decay}\label{pforce}

In this section, we deal with the forcing functions that are power-decaying as time tends to infinity.

\begin{assumption} \label{fLas}
The function $f(t)$ admits the asymptotic expansion, in the sense of Definition \ref{Lexpand} with  $X=\C^n$ and $m_*=0$,
\beq\label{fmu2}
f(t) \sim \sum_{k=1}^\infty f_k(t),\text{ where   $f_k\in \mathscr F_{0}(n_k,-\mu_k,\C^n)$ for  $k\in\N$,}
\eeq
with  $(\mu_k)_{k=1}^\infty$ being a divergent, strictly increasing sequence of positive numbers,
and   $(n_k)_{k=1}^\infty$ being an increasing sequence in $\Z_+$.
Moreover, the set $\mathcal S\eqdef \{\mu_k:k\in\N\}$  preserves the addition and  the unit increment.
\end{assumption}

\begin{scenario}\label{scen4}
Suppose 
\beq\label{fal2}
f(t) \sim  \sum_{k=1}^\infty \widetilde f_k(t),\text{ where   $\widetilde f_k\in \mathscr F_{0}(\widetilde n_k,-\alpha_k,\C^n)$ for  $k\in\N$,}
\eeq
with  the sequence   $(\alpha_k)_{k=1}^\infty$ being the same as in Scenario \ref{scen1},
and $(\widetilde n_k)_{k=1}^\infty\subset \Z_+$.

 Let $\mathcal S$ be defined by
\beq\label{S2}
\mathcal S=\left \{ k+\sum_{j=1}^m \alpha_{\ell_j}: k\in\Z_+,\ m\in\N,\ \ell_j\in \N \right \}.
\eeq

Then the set $\mathcal S$ is infinite, preserves the addition and the unit increment.
We can arrange $\mathcal S$ to be a sequence $(\mu_k)_{k=1}^\infty$ as in Assumption \ref{fLas}, and take 
\beq \label{nk} n_k=\max\{\widetilde n_j:1\le j\le k\}.
\eeq

Same as in Scenario \ref{scen1} and taking into account the embedding in \ref{Cc} after Definition \ref{Fclass},
we obtain the asymptotic expansion \eqref{fmu2} from \eqref{fal2}.

Note in \eqref{S2} that, because of the fact $m\ge 1$, there is the presence of at least one $\alpha_{\ell_j}$, hence,  $\mu_1\ge \alpha_1$.  Then taking $k=0$ and  $m=j=\ell_j=1$ gives
\beq\label{mua1} \mu_1=\alpha_1.\eeq
\end{scenario}

A counterpart of Scenario \ref{scen2}  can also be similarly demonstrated.

\begin{theorem}\label{mainthm2}
Under Assumptions \ref{assumpf}, \ref{fLas}  and  \ref{assumpu},
 there exist functions 
 \beq \label{yF0}
 y_k\in \mathscr F_{0}(n_k,-\mu_k,\C^n)\text{ for $k\in\N$,}
 \eeq 
  such that the solution  $y(t)$ admits the asymptotic expansion  
   \beq \label{yykex2}
  y(t)\sim \sum_{k=1}^\infty y_k(t) \text{ in the sense of Definition \ref{Lexpand} with $m_*=0$.}
  \eeq
  
More specifically, assume, for all $k\in\N$,  
\beq \label{fkLp}
f_k(t)=p_k(\widehat \LL_{n_k}(t))\text{  for some $p_k\in\mathscr P_{0}(n_k,-\mu_k,\C^n)$.}
\eeq  

Then the functions $y_k$'s in \eqref{yykex2} can be constructed recursively as follows. For each $k\in\N$,
\beq \label{ykLq}
y_k(t)=q_k(\widehat \LL_{n_k}(t)),
\eeq 
where 
\beq \label{ykdirect}    
q_k  
=\mathcal Z_A\left( \sum_{m\ge 2}\ \sum_{\mu_{j_{1}}+\mu_{j_{2}}+\ldots \mu_{j_{m}}=\mu_k} \mathcal G_m( q_{j_{1}}, q_{j_{2}},\ldots ,  q_{j_{m}}) +p_k-\chi_k\right ),
\eeq  
with, recalling $\mathcal R$ defined by \eqref{chiz}, 
\beq\label{chikdef}
\chi_k=\begin{cases}
\mathcal R q_\lambda &\text{if there exists $\lambda\le k-1$ such that $\mu_\lambda+1=\mu_k$,}\\
0&\text{otherwise.}
    \end{cases}
\eeq
\end{theorem}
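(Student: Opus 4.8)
The plan is to mimic the induction scheme of Theorem \ref{mainthm}, but working in the function classes $\mathscr F_{0}(n_k,-\mu_k,\C^n)$ rather than $\mathcal F_E(-\mu_k,\C^n)$, and to replace the two uses of Theorem \ref{approx1} by the single use of Theorem \ref{iterlog}. First I would set $\psi(t)=L_0(t)=t$ and, for $N\in\N$, introduce the statement $(\mathcal T_N)$: \emph{there exist $y_k=q_k\circ\widehat\LL_{n_k}\in\mathscr F_{0}(n_k,-\mu_k,\C^n)$ for $1\le k\le N$, with $q_k$ given by \eqref{ykdirect}, such that $|y(t)-\sum_{k=1}^N y_k(t)|=\bigo(t^{-\mu_N-\delta_N})$ for some $\delta_N>0$.} The base case $N=1$ uses Theorem \ref{thmdecay}\ref{Th2} applied to $\mu_1=\alpha_1$ (as in \eqref{mua1}) to first get $y(t)=o(t^{-\mu_1+\delta})$ from $f(t)=o(t^{-\mu_1+\delta})$, then the quadratic bound \eqref{Gyy} gives $G(y(t))=\bigo(t^{-2(\mu_1-\delta)})$, which for small $\delta$ is $\bigo(t^{-\mu_1-\varep_1})$; rewriting \eqref{sys-eq} as $y'+Ay=p_1(\widehat\LL_{n_1}(t))+[\text{faster decaying}]$ and invoking Theorem \ref{iterlog} with $m=m_*=0$, $k=n_1$, $\mu=\mu_1$ produces $q_1=\mathcal Z_A p_1$ and the error $\bigo(t^{-\mu_1-\delta_1})$. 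Here the double sum in \eqref{ykdirect} is empty and $\chi_1=0$ since $\mu_1>0$ cannot equal $\mu_\lambda+1$ for $\lambda\le 0$, so this matches \eqref{ykdirect}.

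For the induction step, with $u_N=\sum_{k=1}^N y_k$ and $v_N=y-u_N$, the main task is to derive the analogue of \eqref{vNE}:
\beq\label{prop-vNE}
v_N'+Av_N = p_{N+1}(\widehat\LL_{n_{N+1}}(t)) - \chi_{N+1}(\widehat\LL_{n_{N+1}}(t)) + \mathcal J_{N+1}(t) + \bigo(t^{-\mu_{N+1}-\delta_*}),
\eeq
where $\mathcal J_{N+1}$ is the nonlinear sum of $\mathcal G_m(y_{j_1},\dots,y_{j_m})$ over $\mu_{j_1}+\dots+\mu_{j_m}=\mu_{N+1}$. The crucial new ingredient, absent from Section \ref{eforce}, is that differentiating $y_k=q_k\circ\widehat\LL_{n_k}$ no longer reproduces $-Ay_k+\mathcal J_k+f_k$ on the nose: by Lemma \ref{logode}, $\ddt q_k(\widehat\LL_{n_k}(t))=\mathcal M_{-1}q_k(\widehat\LL_{n_k}(t))+\mathcal R q_k(\widehat\LL_{n_k}(t))$, and since $q_k\in\mathscr P_0(n_k,-\mu_k,\C^n)$ the remainder $\mathcal R q_k$ lies in $\mathscr P_0(n_k,-\mu_k-1,\C^n)$ by property \ref{R2}, i.e. it decays like $t^{-\mu_k-1}$. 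This extra term $\mathcal R q_k(\widehat\LL_{n_k}(t))$ of order $t^{-(\mu_k+1)}$ must be absorbed into $f_{N+1}$ whenever $\mu_k+1=\mu_{N+1}$ — which is exactly the origin of the correction $\chi_{N+1}$ in \eqref{chikdef} (note the unit-increment-preserving property of $\mathcal S$ guarantees $\mu_k+1\in\mathcal S$). So in the recursion \eqref{ykdirect} the forcing seen by Theorem \ref{iterlog} is $p_{N+1}-\chi_{N+1}+\mathcal J_{N+1}$, whose $\mathcal Z_A$-image is $q_{N+1}$.

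The remaining pieces follow the template of Theorem \ref{mainthm}'s proof. Part (b)-type computation expands $G(y)=\sum_{m=2}^{M_{N+1}}\mathcal G_m((u_N+v_N)^{(m)})+\bigo(t^{-\mu_{N+1}-\theta_N\mu_1/2})$ with $M_{N+1}\ge 2\mu_{N+1}/\mu_1$; the cross terms $\bigo(|u_N|^j|v_N|^{m-j})$ are controlled using $|y_k(t)|=\bigo(t^{-\mu_k+\delta})$ (from \eqref{LLo}) and $v_N(t)=\bigo(t^{-\mu_N-\delta_N})$, and the addition-preserving property of $\mathcal S$ forces $2\mu_N,\ \mu_N+\mu_1\ge\mu_{N+1}$ so these are $\bigo(t^{-\mu_{N+1}-\delta})$. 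The purely-$u_N$ part reorganizes, via \eqref{sumequiv}, into $\sum_{k=1}^{N+1}\mathcal J_k(t)+\bigo(t^{-\mu_{N+2}+\delta})$ using that $\mathcal G_m(y_{j_1},\dots,y_{j_m})\in\mathscr F_0(\cdot,-(\mu_{j_1}+\dots+\mu_{j_m}),\C^n)$. Then applying Theorem \ref{iterlog} to \eqref{prop-vNE} with $\mu=\mu_{N+1}$, $m=m_*=0$, $k=n_{N+1}$ yields $y_{N+1}=(\mathcal Z_A(p_{N+1}-\chi_{N+1}+\mathcal J_{N+1}))\circ\widehat\LL_{n_{N+1}}$, matching \eqref{ykdirect}–\eqref{ykLq}, together with the improved remainder $\bigo(t^{-\mu_{N+1}-\delta_{N+1}})$; this is $(\mathcal T_{N+1})$. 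No $\liminf$/limit-superior hypothesis like \eqref{ylimcond} needs checking because Theorem \ref{iterlog} has no such requirement. One technical point deserving care: when $n_{N+1}>n_k$ for some $k\le N$, the terms $y_k$, $\mathcal J_k$, $\chi_{N+1}$ must all be viewed inside $\mathscr F_0(n_{N+1},\cdot,\C^n)$ via the embedding \ref{Cc}, and one checks this is consistent with the $\mathcal M_j,\mathcal R,\mathcal Z_A$ operators; I expect this bookkeeping — rather than any analytic estimate — to be the main obstacle, exactly as the interplay between $\mathcal R q_k$ and the shift $\mu_k\mapsto\mu_k+1$ is the genuinely new phenomenon compared with Section \ref{eforce}.
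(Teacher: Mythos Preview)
Your proposal is correct and follows essentially the same route as the paper's proof: the same induction scheme with $\psi(t)=t$, the same base case via Theorem \ref{thmdecay}\ref{Th2} and Theorem \ref{iterlog}, the same Part (b)-type expansion of $G(y)$, and the same identification of the $\mathcal R q_k$ terms (via Lemma \ref{logode} and property \ref{R2}) as the source of the correction $\chi_{N+1}$. The only point you leave slightly implicit is the algebraic cancellation for $k\le N$: from \eqref{ZAM} and the recursive definition \eqref{ykdirect} one has $(A+\mathcal M_{-1})q_k=\mathcal Q_k+p_k-\chi_k$, so $y_k'+Ay_k=\mathcal J_k+f_k-\chi_k\circ\widehat\LL_{n_k}+\mathcal R q_k\circ\widehat\LL_{n_k}$, and then the reindexing $\sum_{\lambda=1}^N\mathcal R q_\lambda=\sum_{k=1}^{N+1}\chi_k+\bigo(t^{-\mu_{N+2}+\delta})$ makes the $\chi_k$'s for $k\le N$ drop out, leaving exactly $-\chi_{N+1}$ in \eqref{prop-vNE}; the paper carries this out explicitly as the ``$X_k=0$'' computation.
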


The following explanations and remarks are in order.
\begin{enumerate}[label=\rnum]
\item\label{8a} Certainly, $\chi_1=0$ in \eqref{chikdef} and $q_1=\mathcal Z_A p_1$.

\item \label{8c} Same as \ref{Te} after Theorem \ref{mainthm}, equation \eqref{ykdirect} is a recursive formula.

\item\label{8d} In \eqref{chikdef}, the index $\lambda$, if exists, is unique. Moreover, if $q_\lambda\in \mathscr P_{0}(n_\lambda,-\mu_\lambda,\C^n)$, then, by property \ref{R2} after Definition \ref{defMRZ}, then the  $\chi_k$ belongs to 
$$\mathscr P_{0}(n_\lambda,-\mu_\lambda-1,\C^n)=\mathscr P_{0}(n_\lambda,-\mu_k,\C^n)\subset \mathscr P_{0}(n_k,-\mu_k,\C^n).$$

\item\label{8b} By induction, one can verify, for all $k\in\N$,  that $q_k$ and $\chi_k$ defined by \eqref{ykdirect} and \eqref{chikdef}, respectively,  belong to $\mathscr P_{0}(n_k,-\mu_k,\C^n)$, and, hence, $y_k$ defined by \eqref{ykLq} belongs to $\mathscr F_{0}(n_k,-\mu_k,\C^n)$.
\end{enumerate}

\begin{proof}[Proof of Theorem \ref{mainthm2}]
We follow the proof of Theorem \ref{mainthm}.
Let $f_k$ be as in \eqref{fkLp} and $y_k$ be as in \eqref{ykLq}, for $k\in\N$.
Let $m_*=0$ and $\psi(t)=L_{m_*}(t)=t$ for $t>0$.

We will prove by induction that, for any $N\in\N$,
\beq\label{PTN}
\Big|y(t)-\sum_{k=1}^N y_k(t) \Big|=\bigo(\psi(t)^{-\mu_N-\delta_N})
\text{ for some $\delta_N>0$. }
\eeq

\medskip
\noindent\textbf{First step.} Let $N=1$. We similarly obtain estimate \eqref{ffirst} for $f(t)$ and then, by Theorem \ref{thmdecay}\ref{Th2}, 
estimate \eqref{u-first} for $y(t)$.
Continuing with the proof of Theorem \ref{mainthm} after that, we obtain equation \eqref{yf1}. 
 
Note that $q_1=\mathcal Z_Ap_1$ and $y_1(t)= q_1(\widehat{\LL}_{n_1}(t))$. Then $y_1\in \mathscr F_{0}(-\mu_1,\C^n)$, and, 
according to Theorem \ref{iterlog} applied to $p=p_1$ and $k=n_1$,
\beqs
|y(t)-y_1(t)|=\bigo(\psi(t)^{-\mu_1-\delta_1}).
\eeqs

Thus, \eqref{PTN} is true for $N=1$.

\medskip
\noindent\textbf{Induction step.} Let $N\ge 1$. Suppose that \eqref{PTN} holds true.

Let $u_N$ and $v_N$ be the same as in \eqref{ys}.
Then \eqref{PTN} reads as
$v_N=\bigo(\psi(t)^{-\mu_k-\delta_k})$.

We, again, obtain equation \eqref{vN4}.

Let $\mathcal J_k(t)$ be defined by \eqref{Jk} for $k\in\N$.
Treating $G(y)$ in the same way as in parts (b) and (c) in  the proof of Theorem \ref{mainthm}, we obtain \eqref{Gy} and \eqref{QJ}.
They imply
\beq\label{Gy2}
G(y(t))
=\sum_{k=1}^{N+1}\mathcal J_k(t) +\bigo(\psi(t)^{-\mu_{N+1}-\delta_{N+1}'}).
\eeq  

By formula \eqref{dq0},  it holds, for $k\in\N$, that
\beq \label{ykeq2}    
y_k'=(\mathcal M_{-1}q_k+\mathcal R q_k)\circ \widehat \LL_{n_k}\text{ on } (E_{n_k}(0),\infty).
\eeq  

Summing up \eqref{ykeq2} in $k$ gives
\beq\label{shorty}
\sum_{k=1}^N y_k'=\sum_{k=1}^N \mathcal M_{-1}q_k\circ \widehat\LL_{n_k} + \sum_{\lambda=1}^N \mathcal R q_\lambda\circ \widehat\LL_{n_\lambda} \text{ on } (E_{n_N}(0),\infty).
\eeq
Note that we already made a change of the index notation from $k$ to $\lambda$ for the last sum.

Regarding  the last sum in \eqref{shorty},
we observe that $\mathcal Rq_\lambda\in \mathscr F_{0}(n_\lambda,-\mu_\lambda-1,\C^n)$. Thanks to Assumption \ref{fLas},  $\mu_\lambda+1\in\mathcal S$. Hence, there exists a unique number $k\in\N$ such that $\mu_k=\mu_\lambda+1$. Because $\mu_k>\mu_\lambda$, we have $\lambda\le k-1$. Thus, $\mathcal Rq_\lambda=\chi_k$.
Consider three possibilities $k\le N$, $k=N+1$ and $k\ge N+2$, we  rewrite, similar to \eqref{sGm2},
\beq\label{sumR}
\sum_{\lambda=1}^N \mathcal R q_\lambda\circ \widehat\LL_{n_\lambda}
=\sum_{k=1}^N \chi_k\circ \widehat\LL_{n_k} +\chi_{N+1}\circ \widehat\LL_{n_{N+1}}+\bigo(\psi(t)^{-\mu_{N+2}+\delta})
\quad \forall \delta>0.
\eeq

Combining \eqref{vN4}, \eqref{Gy2} and \eqref{sumR} yields 
\beq\label{vN3}
v_N'+Av_N
= f_{N+1}(t) - \sum_{k=1}^N X_k(t) - \chi_{N+1}\circ \widehat\LL_{n_{N+1}}(t)+ \mathcal J_{N+1}(t) +\bigo(\psi(t)^{-\mu_{N+1}-\delta_*}),
\eeq 
for some $\delta_*>0$, where
\beqs
X_k(t)= (Aq_k+\mathcal M_{-1}q_k + \chi_k-p_k)\circ \widehat\LL_{n_k}(t)-\mathcal J_k(t).
\eeqs

For $k\in\N$ and $z\in (0,\infty)^{n_k+2}$, let 
\beqs
\mathcal Q_k(z)= \sum_{m\ge2}\sum_{\mu_{j_{1}}+\mu_{j_{2}}+\ldots \mu_{j_{m}}=\mu_k}  \mathcal G_m(q_{j_{m,1}}(z),q_{j_{m,2}}(z),\ldots , q_{j_{m,m}}(z)).
\eeqs

Obviously,  $\mathcal Q_k(\widehat\LL_{n_k}(t))=\mathcal J_k(t)$ for all $k\in\N$.
By identity \eqref{ZAM}, we can write
\beqs
\chi_k=(A+\mathcal M_{-1})\mathcal Z_A\chi_k,\
p_k=(A+\mathcal M_{-1})\mathcal Z_A p_k,\
\mathcal J_k=((A+\mathcal M_{-1})\mathcal Z_A\mathcal Q_k)\circ \widehat\LL_{n_k}.
\eeqs

Therefore,
\beqs
X_k(t)=\Big [(A+\mathcal M_{-1})( q_k + \mathcal Z_A(\chi_k-p_k-\mathcal Q_k))\Big] \circ \widehat\LL_{n_k}(t).
\eeqs 

For $1\le k\le N$, one has from \eqref{ykdirect} that
$q_k = \mathcal Z_A(\mathcal Q_k+p_k-\chi_k )$,
hence, $X_k=0$.
It follows from this fact and equation \eqref{vN3} that
\beq\label{vN5}
v_N'+Av_N 
=(p_{N+1}- \chi_{N+1}+ \mathcal Q_{N+1})\circ \widehat\LL_{n_{N+1}} (t) 
 +\bigo(\psi(t)^{-\mu_{N+1}-\delta_*} ).
\eeq

Applying Theorem \ref{iterlog} to equation \eqref{vN5} yields
\beqs
|v_N(t)-y_{N+1}(t)|=\bigo(\psi(t)^{-\mu_{N+1}-\delta_{N+1}}),
\eeqs
for some number $\delta_{N+1}>0$, where
\beqs
y_{N+1}=(\mathcal Z_A ( \mathcal Q_{N+1} + p_{N+1} -\chi_{N+1}))\circ \widehat\LL_{n_{N+1}}=q_{N+1}\circ \widehat\LL_{n_{N+1}}.
\eeqs

Thus, \eqref{PTN} is true for $N:=N+1$.

\medskip\noindent\textbf{Conclusion of the proof of \eqref{PTN}.}
By the Induction Principle, statement  \eqref{PTN}  is true for all $N\in\N$.

\medskip
Now, the asymptotic expansion \eqref{yykex2} clearly follows \eqref{PTN}. The proof of Theorem \ref{mainthm2} is complete.
\end{proof}

\section{Case of logarithmic or iterated logarithmic decay}\label{lforce}

We obtain the asymptotic expansions when the forcing function has logarithmic or iterated logarithmic decay.

\begin{assumption} \label{fLas2}
There exist a number $m_*\in\N$, a divergent, strictly increasing sequence $(\mu_k)_{k=1}^\infty\subset (0,\infty)$, and 
an increasing sequence $(n_k)_{k=1}^\infty\subset \N\cap[m_*,\infty)$ such that
the function $f(t)$ admits the asymptotic expansion, in the sense of Definition \ref{Lexpand} with  $X=\C^n$, 
\beq\label{fmu3}
f(t) \sim \sum_{k=1}^\infty f_k(t),\text{ where  $f_k\in \mathscr F_{m_*}(n_k,-\mu_k,\C^n)$ for $k\in\N$.} 
\eeq
Moreover, the set $\mathcal S\eqdef \{\mu_k:k\in\N\}$  preserves the addition. 
\end{assumption}

\begin{scenario}
Assume, similar to \eqref{fal2}  in Scenario \ref{scen4}, the following  asymptotic expansion, in the sense of Definition \ref{Lexpand} with $X=\C^n$ and $m_*\in\N$, 
\beq\label{fal3}
f(t) \sim  \sum_{k=1}^\infty \widetilde f_k(t),\text{ where   $\widetilde f_k\in \mathscr F_{m_*}(\widetilde n_k,-\alpha_k,\C^n)$ for  $k\in\N$.}
\eeq

Let $\mathcal S=\big\langle \{\alpha_k:k\in\N\}\big\rangle$ and $n_k$ be as in \eqref{nk}.
Then, similar to Scenario \ref{scen4},  we obtain the asymptotic expansion \eqref{fmu3} from \eqref{fal3}. In this case, it is clear that relation \eqref{mua1} is true.
\end{scenario}

\begin{theorem}\label{mainthm3}
Under Assumptions \ref{assumpf}, \ref{fLas2} and  \ref{assumpu},
 there exist  functions 
\beq\label{yFm} 
y_k\in \mathscr F_{m_*}(n_k,-\mu_k,\C^n)\text{ for $k\in\N$,}
\eeq  
 such that the solution $y(t)$ admits the asymptotic expansion
   \beq\label{yexplog}
  y(t)\sim \sum_{k=1}^\infty y_k(t) \text{ in the sense of Definition \ref{Lexpand}.} 
  \eeq
  
More specifically, suppose 
\beq \label{fkLm}
f_k(t)=p_k(\widehat \LL_{n_k}(t)) \text{ with $p_k\in \mathscr P_{m_*}(n_k,-\mu_k,\C^n)$  for all $k\in\N$.}
\eeq 
 Then the functions $y_k$'s in \eqref{yexplog} can be constructed recursively as follows. For each $k\in\N$,
$y_k(t)=q_k(\widehat \LL_{n_k}(t))$, where 
\beq \label{qk3}    
q_k  
=\mathcal Z_A\left( \sum_{m\ge 2}\ \sum_{\mu_{j_{1}}+\mu_{j_{2}}+\ldots \mu_{j_{m}}=\mu_k} \mathcal G_m( q_{j_{1}}, q_{j_{2}},\ldots ,  q_{j_{m}}) +p_k\right ).
\eeq  
\end{theorem}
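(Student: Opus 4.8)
The plan is to follow the proofs of Theorems \ref{mainthm} and \ref{mainthm2}, now exploiting the simplification afforded by $m_*\ge 1$: setting $\psi(t)=\iln_{m_*}(t)$, the bound \eqref{LLk} shows that any function of order $\bigo(t^{-\gamma})$ with $\gamma\in(0,1)$ is automatically $\bigo(\psi(t)^{-\mu})$ for every $\mu>0$. First I would take $f_k=p_k\circ\widehat\LL_{n_k}$ as in \eqref{fkLm} and $y_k=q_k\circ\widehat\LL_{n_k}$ with $q_k$ given by the recursion \eqref{qk3}, and verify by induction — using Lemma \ref{Pprop}, property \ref{R0} after Definition \ref{defMRZ}, and the embedding \ref{Cc} after Definition \ref{Fclass} to regard each $\mathcal G_m(q_{j_1},\ldots,q_{j_m})$ as an element of $\mathscr P_{m_*}(n_k,-\mu_k,\C^n)$ since $n_{j_\ell}\le n_k$ — that $q_k\in\mathscr P_{m_*}(n_k,-\mu_k,\C^n)$, hence $y_k\in\mathscr F_{m_*}(n_k,-\mu_k,\C^n)$, which gives \eqref{yFm}. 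The goal is then the inductive estimate
\beqs
\Bigl|y(t)-\sum_{k=1}^N y_k(t)\Bigr|=\bigo(\psi(t)^{-\mu_N-\delta_N})\quad\text{for some }\delta_N>0,
\eeqs
valid for every $N\in\N$, from which \eqref{yexplog} follows at once.

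For $N=1$ I would argue exactly as in the base step of Theorem \ref{mainthm2}: from \eqref{fmu3} and \eqref{LLo} one gets $f(t)=o(\psi(t)^{-\mu_1+\delta})$ for all $\delta>0$; Theorem \ref{thmdecay}\ref{Th2} then yields $y(t)=\bigo(\psi(t)^{-\mu_1+\delta})$ for all $\delta>0$; rewriting \eqref{sys-eq} as $y'+Ay=f_1+[G(y)+(f-f_1)]$ and using \eqref{Gyy} together with $2(\mu_1-\delta)>\mu_1$ for small $\delta$ and \eqref{fmu3} shows the bracket is $\bigo(\psi(t)^{-\mu_1-\beta})$ for some $\beta>0$; and Theorem \ref{iterlog} applied with $p=p_1$, $k=n_1$ produces $\delta_1>0$ with $|y(t)-(\mathcal Z_Ap_1)(\widehat\LL_{n_1}(t))|=\bigo(\psi(t)^{-\mu_1-\delta_1})$, where $\mathcal Z_Ap_1=q_1$ since the double sum in \eqref{qk3} is empty for $k=1$.

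For the induction step, writing $u_N=\sum_{k=1}^N y_k$, $v_N=y-u_N$ and $\mathcal J_k$ as in \eqref{Jk}, I would compute
\beqs
v_N'+Av_N=G(y)+f-\sum_{k=1}^N\bigl(Ay_k+y_k'\bigr).
\eeqs
By Lemma \ref{logode}, $y_k'=(\mathcal M_{-1}q_k+\mathcal Rq_k)\circ\widehat\LL_{n_k}$; since $q_k\in\mathscr P_{m_*}(n_k,-\mu_k,\C^n)$ with $n_k\ge m_*\ge 1$, formula \eqref{dq1} forces $\mathcal Rq_k\circ\widehat\LL_{n_k}(t)=\bigo(t^{-\gamma})$ for $\gamma\in(0,1)$, so this contribution is absorbed into the remainder — this is exactly where no analogue of the correction $\chi_k$ of Theorem \ref{mainthm2} is needed, and where additivity of $\mathcal S$ alone (rather than preservation of the unit increment) suffices. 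Letting $\mathcal Q_k\in\mathscr P_{m_*}(n_k,-\mu_k,\C^n)$ be the element with $\mathcal Q_k\circ\widehat\LL_{n_k}=\mathcal J_k$, identity \eqref{ZAM} gives $(A+\mathcal M_{-1})q_k=\mathcal Q_k+p_k$, so $Ay_k+\mathcal M_{-1}q_k\circ\widehat\LL_{n_k}=\mathcal J_k+f_k$. Treating $G(y)$ verbatim as in parts (b)--(c) of the proof of Theorem \ref{mainthm} — whose $\bigo$-estimates use only that $\mathcal S$ preserves the addition, the eventual monotonicity of $\psi=\iln_{m_*}$, and $|y_k(t)|=\bigo(\psi(t)^{-\mu_k+\delta})$ from \eqref{LLo} — gives $G(y)=\sum_{k=1}^{N+1}\mathcal J_k+\bigo(\psi(t)^{-\mu_{N+1}-\delta'})$; substituting $f=\sum_{k=1}^{N+1}f_k+\bigo(\psi(t)^{-\mu_{N+1}-\varepsilon_{N+1}})$ into the displayed equation for $v_N$, the partial sums over $k\le N$ cancel and I obtain
\beqs
v_N'+Av_N=(\mathcal Q_{N+1}+p_{N+1})\bigl(\widehat\LL_{n_{N+1}}(t)\bigr)+\bigo(\psi(t)^{-\mu_{N+1}-\delta_*}),\quad\delta_*>0.
\eeqs
Applying Theorem \ref{iterlog} with $p=\mathcal Q_{N+1}+p_{N+1}$ and $k=n_{N+1}$ then yields $\delta_{N+1}>0$ with $|v_N(t)-(\mathcal Z_A(\mathcal Q_{N+1}+p_{N+1}))(\widehat\LL_{n_{N+1}}(t))|=\bigo(\psi(t)^{-\mu_{N+1}-\delta_{N+1}})$, and $\mathcal Z_A(\mathcal Q_{N+1}+p_{N+1})=q_{N+1}$ by \eqref{qk3}, so the induction closes.

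I expect the main obstacle to be organizational rather than conceptual: one must check that the long $G(y)$-computation of Theorem \ref{mainthm} transfers unchanged to the base $\psi=\iln_{m_*}$ (each estimate there uses only additivity of $\mathcal S$ and the rate $|y_k(t)|=\bigo(\psi(t)^{-\mu_k+\delta})$, both available here), and keep careful track of the composition identity $\mathcal Q_k\circ\widehat\LL_{n_k}=\mathcal J_k$ through the embeddings $\mathscr P(n_{j_\ell},\C^n)\hookrightarrow\mathscr P(n_k,\C^n)$. The genuinely new — and pleasantly easy — point is that the $\mathcal Rq_k$ terms are negligible, which is why \eqref{qk3} carries no $\chi_k$.
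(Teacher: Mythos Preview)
Your proposal is correct and follows essentially the same approach as the paper: set $\psi=\iln_{m_*}$, repeat the proof of Theorem \ref{mainthm2}, and use \eqref{dq1} (valid since $n_k\ge m_*\ge 1$) to see that the terms $\mathcal Rq_k\circ\widehat\LL_{n_k}$ are $\bigo(t^{-\gamma})$, hence negligible against any power of $\psi$, so all $\chi_k$'s may be set to zero and \eqref{ykdirect} reduces to \eqref{qk3}. Your write-up is simply more explicit than the paper's terse half-page, but the logical skeleton is identical.
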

\begin{proof}
We follow the proof of Theorem \ref{mainthm2}. Set $\psi(t)=\iln_{m_*}(t)$.
By the virtue of \eqref{dq1},  it holds, for $t\in (E_{n_k}(0),\infty)$ and $k\in\N$, that
\beq \label{ykeq3}    
y_k'(t)=\mathcal M_{-1} q_k(\widehat \LL_{n_k}(t))+\bigo(t^{-\gamma}) \quad\forall\gamma\in(0,1).
\eeq  

In \eqref{shorty}, by using \eqref{ykeq3} instead of \eqref{ykeq2}, we replace
$\sum_{k=1}^N \mathcal R q_k\circ \widehat\LL_{n_k}$ with $\bigo(t^{-\gamma})$.
Because a function of $\bigo(t^{-\gamma})$  is also of $\bigo(\psi(t)^{-\mu_{N+1}-\delta})$ for any $\delta>0$, then after neglecting \eqref{sumR}, all terms $\chi_k$'s for $1\le k\le N+1$ in calculations thereafter can be taken to be $0$.
The proof goes through. During the process,  formula  \eqref{ykdirect}  has the term $\chi_k$ dropped, and, hence, becomes \eqref{qk3}. 
\end{proof}

\section{Results for systems in the real linear spaces}\label{realsec}

In this section, we focus on the case when the system of ODEs is given in $\R^n$.
Naturally, it is expected that the (real-valued) decaying solutions can be approximated by real-valued functions. However, our constructions in the previous Sections \ref{eforce}--\ref{lforce} rely heavily on calculations with complex numbers. Hence, the approximating functions are not necessarily real-valued. Nonetheless, we will prove that it is, in fact, still true. 

We will use the idea of complexification, which we recall below in a brief and convenient form. For more details, see, e.g., \cite[section 77]{Halmos1974}.
 
Let $X$ be a linear space over $\R$. Its complexification is $X_\C=X+iX$ with the following natural addition and scalar multiplication. For any $z= x+iy$ and $z'=x'+iy'$ in $X_\C$ with $x,x',y,y'\in X$, and any $c=a+ib$ in $\C$ with $a,b\in\R$, define
\begin{align*}
z+z'&=(x+x')+i(y+y'),\\
cz&=(ax-by)+i(bx+ay).
\end{align*}

Then $X_\C$ is a linear space over $\C$ and, of course, $X\subset X_\C$. 

For $z=x+iy\in X_\C$, with $x,y\in X$,  its conjugate is defined by 
$\overline z=x-iy=x+i(-y).$
When more explicit notation is needed, we denote this $\overline z$ by ${\overline z}^{X_\C} $.
Obviously, 
$z+\overline z\in X$. Also, $z=\overline z$ if and only if $z\in X$. 
 One can verify that
\beq
 \overline{cz}=\bar c\, {\overline z} \text{ for all } c\in\C, z\in X_\C.
\eeq

Suppose $(X,\langle\cdot,\cdot\rangle_X)$ is an inner product  space over $\R$. Then $X_\C$ is an inner product space over $\C$ with the corresponding inner product $\langle\cdot,\cdot\rangle_{X_\C}$ defined by
\beqs
\langle x+iy,x'+iy'\rangle_{X_\C}=\langle x,x'\rangle_X+\langle y,y'\rangle_X+i(\langle y,x'\rangle_X-\langle x,y'\rangle_X) \text{ for }x,x',y,y'\in X.
\eeqs

Denote by $\|\cdot\|_X$ and $\|\cdot\|_{X_\C}$ the norms on $X$ and $X_\C$ induced  from their respective inner products. Then   
\beqs
\|x+iy\|_{X_\C}=(\|x\|_X^2+\|y\|_X^2)^{1/2}\text{ and } \|\overline z\|_{X_\C}=\| z\|_{X_\C} \text{  for all $x,y\in X$ and $z\in X_\C$.}
\eeqs

For any $k\in\N$,  the complexification of $X=\R^k$ is $\C^k$  as both a linear space and an inner product space. 
For $z=(z_1,\ldots,z_k)\in \C^k=X_\C$, $\overline z^{X_\C}$ is the standard conjugate vector $\bar z=(\bar z_1,\ldots,\bar z_k)$ and $\|z\|_{X_\C}$ is the standard Euclidean norm $|z|=(|z_1|^2+\ldots+|z_k|^2)^{1/2}$.


Let $S$ be a subset of $\C^k$ with $k\in\N$. We say $S$ preserves the conjugation if the conjugate $\bar z$ of any $z\in S$ also belongs to $S$. Define the conjugate set of $S$ to be $\overline S=\{\bar\lambda:\lambda\in S\}$.

Because we deal with real-valued solutions and forcing functions now, we need to restrict the classes of functions in  Definition \ref{newclass}.

\begin{definition}\label{realclass} 
Let $X$ be a linear space over $\R$, and  $X_\C$ be its complexification.
Define
\beqs 
\mathcal F_E(X_\C,X)=\{g\in \mathcal F_E(X_\C):g(t)\in X\text{ for all } t\in\R\}.
\eeqs 

For $\mu\in\R$, define
\beqs 
\mathcal F_E(\mu,X_\C,X)=\mathcal F_E(X_\C,X)\cap \mathcal F_E(\mu,X_\C).
\eeqs 
 \end{definition}

The next lemma provides the  characteristics for the representation \eqref{gEform} and the asymptotic expansions of the functions in $\mathcal F_E(X_\C,X)$. 

\begin{lemma}\label{invar1}
Let $X$ be a linear space over $\R$, and  $X_\C$ be its complexification.
The following statements hold true.
\begin{enumerate}[label=\tnum]
\item\label{iva}   Assume $g\in\mathcal F_E(X_\C)$. 
Then $g(t)\in X$ for all $t\in\R$
 if and only if 
 \beq\label{flam} g(t)=\sum_{\lambda\in S} g_\lambda(t)\text{ with }  g_\lambda(t)=p_\lambda(t) e^{\lambda t},
 \eeq 
  where $S$ is a finite subset of $\C$ that preserves the conjugation, $p_\lambda$'s are polynomials from $\R$ to $X_\C$, and 
  \beq \label{fbb}
  g_{\bar \lambda}=\overline{g_\lambda}\quad \text{ for all } \lambda\in S.
\eeq   

\item\label{ivb}  Assume, in addition, that $X$ is an inner product space over $\R$. Let $g:(T,\infty)\to X$ for some $T\in\R$. Suppose $g$, as a $X_\C$-valued function, has an asymptotic expansion, in the sense of Definition \ref{EEdef},
\beq\label{fe}
g(t)\sim \sum_{k=1}^\infty g_k(t),\quad g_k\in \mathcal F_E(-\mu_k,X_\C).
\eeq 
Then $g_k(t)\in X$ for all $k\in\N$ and $t\in\R$.
\end{enumerate}
\end{lemma}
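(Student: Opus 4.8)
The plan is to prove the two parts separately, each reducing to the uniqueness results already established (Lemma~\ref{uniE} and Proposition~\ref{unipres}) together with the compatibility of conjugation with the operations involved.

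\textbf{Part \ref{iva}.} For the ``if'' direction, assume $g$ is written as in \eqref{flam} with $S$ preserving conjugation and $g_{\bar\lambda}=\overline{g_\lambda}$ for all $\lambda\in S$. I would split $S$ into the real elements and pairs $\{\lambda,\bar\lambda\}$ with $\lambda\notin\R$. For a real $\lambda$, condition \eqref{fbb} with $\bar\lambda=\lambda$ gives $g_\lambda=\overline{g_\lambda}$, so $g_\lambda(t)\in X$ for all $t$ (recall $z=\bar z$ iff $z\in X$). For a conjugate pair, $g_\lambda(t)+g_{\bar\lambda}(t)=g_\lambda(t)+\overline{g_\lambda(t)}\in X$ since $z+\bar z\in X$ always. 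Summing these contributions shows $g(t)\in X$ for all $t\in\R$. For the ``only if'' direction, start with any representation $g(t)=\sum_{\lambda\in S}p_\lambda(t)e^{\lambda t}$; enlarging $S$ to $S\cup\overline S$ and padding with zero polynomials, I may assume $S$ preserves conjugation. Since $g(t)\in X$ for all real $t$, we have $g(t)=\overline{g(t)}$, i.e. $\sum_{\lambda\in S}p_\lambda(t)e^{\lambda t}=\sum_{\lambda\in S}\overline{p_\lambda}(t)e^{\bar\lambda t}=\sum_{\lambda\in S}\overline{p_{\bar\lambda}}(t)e^{\lambda t}$ after re-indexing $\lambda\mapsto\bar\lambda$ in the last sum. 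Here I use that $\overline{e^{\lambda t}}=e^{\bar\lambda t}$ for real $t$ and that conjugation of an $X_\C$-valued polynomial is again such a polynomial. Now invoke the uniqueness of the coefficient polynomials in Proposition~\ref{unipres} (applied to $g$ against itself, both sides being elements of $\mathcal F_E(X_\C)$) to conclude $p_\lambda=\overline{p_{\bar\lambda}}$, i.e. $g_{\bar\lambda}=\overline{g_\lambda}$ for all $\lambda\in S$. Discarding any $\lambda$ with $p_\lambda=0$ (which come in conjugate pairs, since $p_\lambda=0\iff p_{\bar\lambda}=0$) leaves the desired form \eqref{flam}--\eqref{fbb}.

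\textbf{Part \ref{ivb}.} Suppose $g$ takes values in $X$ and has the asymptotic expansion \eqref{fe}. Fix $k$ and write $g_k\in\mathcal F_E(-\mu_k,X_\C)$ as $g_k(t)=\sum_{\lambda\in S_k}p_{k,\lambda}(t)e^{\lambda t}$ with $\Re\lambda=-\mu_k$ for all $\lambda\in S_k$. I would compare $g_k$ with its conjugate $\overline{g_k}$, which, since conjugation sends $\mathcal F_E(-\mu_k,X_\C)$ to itself (the real parts of exponents are preserved under $\lambda\mapsto\bar\lambda$ because $-\mu_k$ is real), is again in $\mathcal F_E(-\mu_k,X_\C)$. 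The key estimate: by Definition~\ref{EEdef}\ref{EE1}, for the index $N=k$ there is $\mu>\mu_{k-1}$ (take $\mu=\mu_k$ plus a positive gap, or just use $\mu>\gamma_{k-1}$; more carefully, apply the definition at $N=k-1$ and at $N=k$) giving $\|g(t)-\sum_{j=1}^{k}g_j(t)\|_{X_\C}=\bigo(e^{-\mu t})$ with $\mu>\mu_k$, and similarly $\|g(t)-\sum_{j=1}^{k-1}g_j(t)\|_{X_\C}=\bigo(e^{-\mu' t})$ with $\mu'>\mu_{k-1}$; subtracting and using that for $j<k$ each $g_j$ satisfies $\|g_j(t)\|_{X_\C}=\bigo(e^{(-\mu_j+\delta)t})$ while we want to isolate the $e^{-\mu_k t}$ scale, I instead argue directly: the tail $g(t)-\sum_{j=1}^{k-1}g_j(t)$ equals $g_k(t)+\bigo(e^{-\mu t})$ with $\mu>\mu_k$. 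Since $g(t)\in X$ and each $g_j(t)+\overline{g_j(t)}\in X$ is not automatic unless we know $g_j$ is $X$-valued --- so I proceed by induction on $k$. Assume $g_1,\dots,g_{k-1}$ are already known to be $X$-valued (base case vacuous). Then $h_k(t):=g(t)-\sum_{j=1}^{k-1}g_j(t)$ is $X$-valued, hence $\overline{h_k(t)}=h_k(t)$, and $h_k(t)=g_k(t)+r(t)$ with $\|r(t)\|_{X_\C}=\bigo(e^{-\mu t})$, $\mu>\mu_k$. Conjugating, $h_k(t)=\overline{g_k(t)}+\overline{r(t)}$ with $\|\overline{r(t)}\|_{X_\C}=\|r(t)\|_{X_\C}=\bigo(e^{-\mu t})$. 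Subtracting, $e^{\mu_k t}(g_k(t)-\overline{g_k(t)})\to 0$ as $t\to\infty$; since both $g_k$ and $\overline{g_k}$ lie in $\mathcal F_E(-\mu_k,X_\C)$, Lemma~\ref{uniE} (applied with $\mu:=-\mu_k$ there) forces $g_k=\overline{g_k}$, i.e. $g_k(t)\in X$ for all $t\in\R$. This completes the induction and hence Part \ref{ivb}.

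\textbf{Main obstacle.} The routine parts are the algebraic identities $\overline{e^{\lambda t}}=e^{\bar\lambda t}$ (for real $t$), conjugation of polynomials, and $z+\bar z\in X$, $z=\bar z\iff z\in X$. The one point requiring care is the induction in Part \ref{ivb}: one must peel off the already-real lower-order terms before conjugating, so that $h_k$ is genuinely $X$-valued and Lemma~\ref{uniE} can be applied to the single scale $e^{-\mu_k t}$; a direct appeal to uniqueness of the whole expansion would need the expansion's uniqueness in $X_\C$ (which holds by the remark after Definition~\ref{EEdef}) combined with the observation that $\sum\overline{g_k}$ is also an asymptotic expansion of $g=\bar g$, and then matching term by term --- either route works, but the inductive peeling is the cleanest and avoids invoking a ``conjugate expansion'' lemma not explicitly stated. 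I also need to make sure conjugation preserves $\mathcal F_E(-\mu_k,X_\C)$, which is immediate since $-\mu_k\in\R$ so $\Re\bar\lambda=\Re\lambda=-\mu_k$.
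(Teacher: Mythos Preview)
Your proof is correct. Part~\ref{iva} matches the paper's argument essentially verbatim: enlarge $S$ to $S\cup\overline S$, use $g=\overline g$, and invoke Proposition~\ref{unipres}.

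For Part~\ref{ivb} you take a slightly different route than the paper. The paper argues globally: since $g$ is $X$-valued, $g=\overline g$, and $\overline g(t)\sim\sum_{k}\overline{g_k}(t)$ is itself an asymptotic expansion of $g$ in the sense of Definition~\ref{EEdef}; then the uniqueness of such expansions (the remark after Definition~\ref{EEdef}, itself resting on Lemma~\ref{uniE}) gives $g_k=\overline{g_k}$ for every $k$ in one stroke. You instead peel off the terms inductively, show the tail $h_k$ is $X$-valued once $g_1,\dots,g_{k-1}$ are, and apply Lemma~\ref{uniE} directly at the single scale $e^{-\mu_k t}$. Both arguments are valid and rest on the same underlying uniqueness lemma; the paper's version is shorter because it packages the induction into the already-stated uniqueness of the full expansion, while yours is more self-contained and makes explicit where the inner product (normed) structure on $X_\C$ enters. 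You even note the paper's route as an alternative in your ``Main obstacle'' paragraph, so you have correctly identified both options.
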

\begin{proof}
We prove part \ref{iva} first. The sufficient condition is obvious thanks to the facts that $S$ preserves the conjugation and, by \eqref{fbb},
\beqs
g_\lambda(t)+g_{\bar\lambda}(t)=g_\lambda(t)+\overline{g_\lambda(t)} \in  X 
\text{ for all }\lambda\in S,\, t\in\R.
\eeqs 

We prove the necessary condition now. Consider a fucntion $g$ in $\mathcal F_E(X_\C)$  given by \eqref{flam} with a finite set $S$ first.
By replacing $S$ with the  union $S\cup \overline S$ as well as adding the zero functions if needed,
we can assume \eqref{flam} with $S$ preserving the conjugation.
Then 
$$\overline g=\sum_{\lambda\in S} \overline{g_\lambda}, \quad\text{with }  \overline{g_\lambda}(t)=\overline{ p_\lambda(t) }e^{\bar \lambda t}.$$ 

Because $g$ is $X$-valued, we have
\beq \label{flamb}
g=\overline g=\sum_{\lambda\in S}\overline{g_\lambda}.
\eeq 

By the uniqueness in Proposition \ref{unipres}, the two functions corresponding to $e^{\bar\lambda t}$ in \eqref{flam} and \eqref{flamb} must be the same, i.e., 
$g_{\bar\lambda}=\overline{g_\lambda}$ for all $\lambda\in S$.
Hence, we obtain \eqref{fbb}.

We prove part \ref{ivb} now.  
Clearly, the conjugate function $\overline g$ has the following asymptotic expansion
\beq\label{fbar}
\overline g(t)\sim \sum_{k=1}^\infty \overline{g_k}(t),
\eeq 
where $\overline{g_k}$ is the conjugate function of $g_k$, and, obviously, $\overline{g_k}\in \mathcal F_E(-\mu_k,X_\C)$ for all $k\in\N$.

Because $g$ is $X$-valued, we have $g=\overline g$, thus, it follows \eqref{fbar} that
\beq\label{feb}
g(t)\sim \sum_{k=1}^\infty \overline{g_k}(t),\quad \overline{g_k}\in \mathcal F_E(-\mu_k,X_\C).
\eeq 

By the uniqueness of the asymptotic expansion of $g$, see the second remark after Definition \ref{EEdef}, we deduce from \eqref{fe}   and \eqref{feb} that $g_k(t)=\overline{g_k}(t)$ for all $k\in\N$ and $t\in \R$. Hence, $g_k(t)\in X$  for all $k\in\N$ and $t\in \R$.
\end{proof}

Next, we have the complexification of multi-linear mappings.
For the sake of simplicity, we consider only the particular  space $\R^n$.

\begin{definition}
Let $\{e_j:1\le j\le n\}$ be the canonical basis for $\R^n$ and $\C^n$.
Let $M$ be an $m$-linear mapping from $(\R^n)^m$ to $\R^n$.
The complexification of $M$ is  $M_\C:(\C^n)^m\to \C^n$  defined by
\beq\label{MCdef}
\begin{aligned}
&M_\C\left(\sum_{j_1=1}^n z_{1,j_1} e_{j_1}, \sum_{j_2=1}^n z_{1,j_2} e_{j_2},\ldots, \sum_{j_m=1}^n z_{m,j_m} e_{j_m}\right )\\
& =\sum_{j_1,j_2,\ldots,j_m=1}^n  z_{1,j_1}z_{2,j_1} \ldots  z_{m,j_m}  M( e_{j_1},e_{j_2}, \ldots,  e_{j_m} )
\end{aligned}
\eeq
for all $z_{k,j_k}\in\C$, $k=1,2,\ldots,m$ and $j_k=1,2,\ldots,n$.
\end{definition}

Then $M_\C$ is the unique $m$-linear mapping (over $\C$) from $(\C^n)^m$ to $\C^n$ that extends  $M$.

Because $M( e_{j_1},e_{j_2}, \ldots,  e_{j_m} )$ in \eqref{MCdef} are real-valued, one has
\beq\label{MCbar}
M_\C(\bar z_1,\bar z_2,\ldots,\bar z_m)=\overline{M_\C(z_1,z_2,\ldots,z_m)} \quad\forall z_1,z_2,\ldots,z_m\in\C^n .
\eeq

We now impose the main assumptions for system \eqref{sys-eq} in $\R^n$.

\begin{assumption}\label{realassum}
The following are assumed to hold in this section.

\begin{enumerate}[label=\tnum]

\item The matrix $A$ is an $n\times n$  matrix of real numbers that satisfies Assumption \ref{assumpA}.

\item The function $G$ is from $\R^n$ to $\R^n$, satisfies Assumption \ref{assumpG} with 
 $G_m$'s being functions from $\R^n$ to $\R^n$.
 
 \item The multi-linear mappings $\mathcal G_m$'s in \eqref{GG} are from $(\R^n)^m$ to $\R^n$.
 
 \item The  forcing function $f(t)$ and solution $y(t)$ are $\R^n$-valued and satisfy Assumptions \ref{assumpf} and \ref{assumpu}.
 \end{enumerate}
\end{assumption}

In the following proofs, for the $m$-linear mapping $\mathcal G_m:(\R^n)^m\to \R^n$ given in Assumption \ref{realassum}, let  $\mathcal G_{m,\C}:(\C^n)^m\mapsto \C^n$ be its complexification.

\begin{theorem}\label{newthmE}
Theorem \ref{mainthm} holds true when we replace $ \mathcal F_E(-\mu_k,\C^n) $ in \eqref{fmu} and \eqref{yFE} with  $\mathcal F_E(-\mu_k,\C^n,\R^n)$.
\end{theorem}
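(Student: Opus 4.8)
The plan is to invoke Theorem \ref{mainthm} verbatim for the system viewed in $\C^n$, and then to upgrade the conclusion from "$y_k\in\mathcal F_E(-\mu_k,\C^n)$" to "$y_k\in\mathcal F_E(-\mu_k,\C^n,\R^n)$", i.e. that each $y_k$ is $\R^n$-valued. Since Assumption \ref{realassum} guarantees that $A$ is a real matrix, $G$ is $\R^n$-valued satisfying Assumption \ref{assumpG}, and $f,y$ are $\R^n$-valued satisfying Assumptions \ref{assumpf} and \ref{assumpu}, the hypotheses of Theorem \ref{mainthm} hold (with $\C^n$ as the ambient space), so there exist $y_k\in\mathcal F_E(-\mu_k,\C^n)$ with $y\sim\sum_{k=1}^\infty y_k$ in the sense of Definition \ref{EEdef}, and each $y_k$ satisfying \eqref{ykeq}. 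The only thing left to prove is that these $y_k$ are actually $\R^n$-valued.

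First I would observe that by the last condition of Assumption \ref{fEas}, the index set $\mathcal S=\{\mu_k\}$ contains $\Re\sigma(A)$; moreover $f\in\mathcal F_E(-\mu_k,\C^n,\R^n)$ now means each $f_k$ is $\R^n$-valued (by the new hypothesis substituted into \eqref{fmu}). The cleanest route is to apply Lemma \ref{invar1}\ref{ivb} directly: the solution $y$ is $\R^n$-valued and admits, as a $\C^n$-valued function, the asymptotic expansion $y(t)\sim\sum_{k=1}^\infty y_k(t)$ with $y_k\in\mathcal F_E(-\mu_k,\C^n)$. Taking $X=\R^n$, so that $X_\C=\C^n$, Lemma \ref{invar1}\ref{ivb} immediately yields $y_k(t)\in\R^n$ for all $k\in\N$ and all $t\in\R$. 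Hence $y_k\in\mathcal F_E(-\mu_k,\C^n,\R^n)$, which is exactly the strengthened conclusion. The expansion \eqref{yykex} and the recursion \eqref{ykeq} are carried over unchanged from Theorem \ref{mainthm}.

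I would also note that the recursion \eqref{ykeq} is internally consistent with this real-valuedness, which provides a useful sanity check (and an alternative proof by induction, should one prefer not to rely on Lemma \ref{invar1}\ref{ivb}): $y_1$ solves $y_1'+Ay_1=f_1$ with $f_1$ real-valued and $A$ real, so by uniqueness of the representation in $\mathcal F_E(-\mu_1,\C^n)$ together with Lemma \ref{invar1}\ref{iva}, $y_1$ is $\R^n$-valued; and inductively, if $y_{j}\in\mathcal F_E(-\mu_j,\C^n,\R^n)$ for all $j\le N$, then each $\mathcal G_m(y_{j_1},\ldots,y_{j_m})$ appearing in \eqref{ykeq} for $k=N+1$ is $\R^n$-valued because $\mathcal G_m$ maps $(\R^n)^m$ into $\R^n$ (Assumption \ref{realassum}), so the whole right-hand side of \eqref{ykeq} is $\R^n$-valued, and then the same uniqueness-plus-\ref{invar1}\ref{iva} argument forces $y_{N+1}$ to be $\R^n$-valued. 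Either argument works; I would present the Lemma \ref{invar1}\ref{ivb} route as the main proof since it is a one-line deduction.

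The main (and really only) obstacle is purely bookkeeping: one must check that the statement of Theorem \ref{mainthm} applies with no modification when the scalars are taken in $\C$ but the data happen to be real, i.e. that nothing in Assumptions \ref{assumpf}, \ref{fEas}, \ref{assumpu} is violated by restricting to real-valued $f$ and $y$ — which is immediate, since $\mathcal F_E(-\mu_k,\C^n,\R^n)\subset\mathcal F_E(-\mu_k,\C^n)$ and $\R^n\subset\C^n$. There is no analytic difficulty; the content is entirely in having set up Lemma \ref{invar1}\ref{ivb} (uniqueness of the asymptotic expansion plus invariance under conjugation) ahead of time. So the proof is short: cite Theorem \ref{mainthm} for the $\C^n$ system, then apply Lemma \ref{invar1}\ref{ivb} with $X=\R^n$ to conclude each $y_k$ is $\R^n$-valued, hence lies in $\mathcal F_E(-\mu_k,\C^n,\R^n)$.
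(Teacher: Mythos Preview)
Your approach has a genuine gap that the paper explicitly flags. You propose to ``invoke Theorem \ref{mainthm} verbatim for the system viewed in $\C^n$,'' but the hypotheses of Theorem \ref{mainthm} require $G:\C^n\to\C^n$ satisfying Assumption \ref{assumpG}, whereas under Assumption \ref{realassum} the function $G$ is only defined on $\R^n$. There is no canonical way to extend $G$ to $\C^n$ so that Assumption \ref{assumpG} is still satisfied; the paper's remark immediately after its proof says exactly this: ``we do not have sufficient information about $G(y)$ to complexify it.'' So you cannot black-box Theorem \ref{mainthm}.

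The obstruction is not merely formal. In the proof of Theorem \ref{mainthm}, the key step \eqref{GGm} writes $G_m(y(t))=\mathcal G_m((u_N+v_N)^{(m)})$ and then expands by multilinearity. Here $u_N=\sum_{k\le N} y_k$ is a priori $\C^n$-valued (the $y_k$ come out of Theorem \ref{approx1} with no real-valuedness guaranteed), so one needs $\mathcal G_m$ to act on $\C^n$-tuples. The paper resolves this by replacing $\mathcal G_m$ with its complexification $\mathcal G_{m,\C}$ at that step, re-running the induction to obtain \eqref{ykC}, and \emph{then} applying Lemma \ref{invar1} to conclude $y_k\in\mathcal F_E(-\mu_k,\C^n,\R^n)$; once the $y_k$ are known to be real, $\mathcal G_{m,\C}$ collapses back to $\mathcal G_m$ and one recovers \eqref{ykeq}. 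Your use of Lemma \ref{invar1}\ref{ivb} at the end is exactly right and matches the paper; what is missing is the intermediate replacement of $\mathcal G_m$ by $\mathcal G_{m,\C}$ so that the induction in Theorem \ref{mainthm} makes sense. Your alternative inductive argument has the same issue: before you know the $y_j$ are real, you cannot evaluate $\mathcal G_m(y_{j_1},\ldots,y_{j_m})$.
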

\begin{proof}
We adjust the proof of Theorem \ref{mainthm}.
First, we replace \eqref{hypoeq} in the statement  ($\mathcal T_N$) by the following
\beq \label{ykC}    
y_k'+ Ay_k  
=\sum_{m\ge 2}\ \sum_{\mu_{j_{1}}+\mu_{j_{2}}+\ldots \mu_{j_{m}}=\mu_k} \mathcal G_{m,\C}(  y_{j_{1}}, y_{j_{2}},\ldots ,  y_{j_{m}}) +f_k,
\eeq  
 for $t\in\R$ and 
for $k=1,2,\ldots,N$.

In identity \eqref{GGm}, we can equate the last term with $\mathcal G_{m,\C}((u_N+v_N)^{(m)}).$
Then proceed the proof afterward with $\mathcal G_{m,\C}$ replacing $\mathcal G_{m}$.
At the end of that proof, we obtain the asymptotic expansion \eqref{yykex} for the $\R^n$-valued solution $y(t)$ with each $y_k\in \mathcal F_E(-\mu_k,\C^n)$ satisfying \eqref{ykC}.
By Lemma \ref{invar1}, each $y_k$ belongs to $\mathcal F_E(-\mu_k,\C^n,\R^n)$. 
Because $y_k(t)$ are real-valued now, the term  $\mathcal G_{m,\C}$ in \eqref{ykC} is equal to $\mathcal G_{m}$, and, hence, we obtain \eqref{ykeq}.
\end{proof}

It is worth pointing out that we do not complexify equation \eqref{sys-eq} in the above proof of Theorem \ref{newthmE}.
In fact, we do not have sufficient information about $G(y)$ to complexify it. 
We only approximate $G(y)$ by the finite sum $\sum_{m=2}^{M_{N+1}} \mathcal G_m(y^{(m)})$, then use the complexified mapping $\mathcal G_{m,\C}$ to replace $\mathcal G_m$ and continue the computations.

In dealing with power, logarithmic and iterated logarithmic functions valued in real linear spaces, we have the following counterpart of Definition \ref{Fclass}.

\begin{definition}\label{realF}
 $X$ be a linear space over $\R$, and $X_\C$ be its complexification.

\begin{itemize}
\item Define $\mathscr P(k,X_\C,X)$ to be set of functions of the form 
\beq\label{Rpzdef} 
p(z)=\sum_{\alpha\in S}  z^{\alpha}\xi_{\alpha}\text{ for }z\in (0,\infty)^{k+2},
\eeq 
where $S$ is a finite subset of $\C^{k+2}$ that preserves the conjugation,
and each $\xi_{\alpha}$ belongs to $X_\C$, with 
\beq\label{xiconj}
\xi_{\bar \alpha}=\overline{\xi_\alpha} \quad\forall \alpha\in S.
\eeq

\item Define $\mathscr P_{m}(k,\mu,X_\C,X)$ to be set of functions in $\mathscr P(k,X_\C,X)$ with the restriction that the set $S$ in \eqref{Rpzdef} is also a subset of $\mathcal E(m,k,\mu)$.

\item For $k\ge m\ge -1$ and $\mu\in\R$, define 
 \beqs 
 \mathscr F_{m}(k,\mu,X_\C,X)=\Big\{ p\circ \widehat{\LL}_{k}: p\in\mathscr P_{m}(k,\mu,X_\C,X)\Big\}.
 \eeqs
 \end{itemize}
 \end{definition}

Note, for any $t>0$ and $z\in \C$, that  $t^{\bar z}=\overline{t^z}$. 
Then, referring to \eqref{vecpow}, we have
\beq \label{zaconj}
 z^{\bar \alpha}=\overline{z^\alpha}  \text{ for all }z\in(0,\infty)^{k+2}, \ \alpha\in \C^{k+2}.
 \eeq 

Because of \eqref{zaconj}  and the conjugation condition \eqref{xiconj}, each function $p$ in the class $\mathscr P(k,X_\C,X)$ is, in fact, $X$-valued.

If we rewrite \eqref{Rpzdef} as
\beq
p(z)=\sum_{\alpha\in S} p_\alpha(z),\text{ where } p_\alpha(z)=z^\alpha \xi_\alpha,
\eeq
then condition \eqref{xiconj} is equivalent to 
\beq\label{newcri}
p_{\bar \alpha}(z)=\overline{p_\alpha(z)}  \text{ for all } z\in(0,\infty)^{k+2}, \ \alpha\in S.
\eeq

We remark that the classes $\mathscr P(k,X_\C,X)$  and  $\mathscr P_{m}(k,\mu,X_\C,X)$ are (additive) subgroups of $\mathscr P(k,X_\C)$, but not linear spaces over $\C$.

We examine the restrictions of the mappings $\mathcal M_j$'s, $\mathcal R$ and $\mathcal Z_A$ on the new classes in Definition \ref{realF}.

\begin{lemma}\label{invar2}
The following statements hold true.
\begin{enumerate}[label=\tnum]
\item\label{invi}  
	Each $\mathcal M_j$, for $-1\le j\le k$, maps $\mathscr P(k,\C^n,\R^n)$ into itself, 
	  $\mathcal R$ maps $\mathscr P(k,\C^n,\R^n)$, for $k\ge 0$, into itself, 
	and   $\mathcal Z_A$ maps $\mathscr P_{-1}(k,0,\C^n,\R^n)$, for $k\ge -1$,  into itself. 
\item\label{invii} 
	All $\mathcal M_j$'s, for $-1\le j\le k$,  and $\mathcal Z_A$ 
map  $\mathscr P_{m}(k,\mu,\C^n,\R^n)$ into itself for any integers $k\ge m\ge 0$ and real number $\mu$.
\item \label{inviii} 
	 $\mathcal R$ 
maps  $\mathscr P_{0}(k,\mu,\C^n,\R^n)$ into $\mathscr P_{0}(k,\mu-1,\C^n,\R^n)$ for any  $k\in\Z_+$ and $\mu\in\R$. 
\end{enumerate}
\end{lemma}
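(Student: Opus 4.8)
The plan is to reduce each assertion to the corresponding ``complex-coefficient'' fact already recorded as properties \ref{R0}, \ref{R1} and \ref{R2} after Definition~\ref{defMRZ}, and then to check only that the conjugation-symmetry conditions defining the real classes are not destroyed. Recall from Definition~\ref{realF}, together with \eqref{zaconj} and the criterion \eqref{newcri}, that a function $p$ lies in $\mathscr P(k,\C^n,\R^n)$ exactly when it has a representation $p(z)=\sum_{\alpha\in S}z^\alpha\xi_\alpha$ with $S=\overline S$ and $\xi_{\bar\alpha}=\overline{\xi_\alpha}$ for all $\alpha\in S$; membership in $\mathscr P_{m}(k,\mu,\C^n,\R^n)$ adds only the requirement $S\subset\mathcal E(m,k,\mu)$, and all these classes are additive subgroups of $\mathscr P(k,\C^n)$. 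Two elementary facts will be used repeatedly: first, $\overline{z^\alpha}=z^{\bar\alpha}$ for $z\in(0,\infty)^{k+2}$, which is \eqref{zaconj}; second --- and this is the one place where the hypothesis that $A$ has real entries (Assumption~\ref{realassum}) enters --- since $\Re(\alpha_{-1})=0$ makes $A+\alpha_{-1}I_n$ invertible (as noted after Definition~\ref{defMRZ}) and $A$ is real, one has $\overline{(A+\alpha_{-1}I_n)^{-1}}=(A+\overline{\alpha_{-1}}I_n)^{-1}$.

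I first dispose of $\mathcal M_j$ and $\mathcal Z_A$, which are immediate because, by \eqref{MM} and \eqref{ZAp}, they leave the exponent set $S$ of $p$ unchanged and merely replace each coefficient $\xi_\alpha$ by $\alpha_j\xi_\alpha$, respectively $(A+\alpha_{-1}I_n)^{-1}\xi_\alpha$. Starting from $p\in\mathscr P(k,\C^n,\R^n)$, the new coefficient attached to $\bar\alpha$ is $\bar\alpha_j\,\xi_{\bar\alpha}=\bar\alpha_j\,\overline{\xi_\alpha}=\overline{\alpha_j\xi_\alpha}$ for $\mathcal M_j$, and $(A+\overline{\alpha_{-1}}I_n)^{-1}\overline{\xi_\alpha}=\overline{(A+\alpha_{-1}I_n)^{-1}\xi_\alpha}$ for $\mathcal Z_A$, by the two facts above; hence $\mathcal M_jp$ and $\mathcal Z_Ap$ again lie in $\mathscr P(k,\C^n,\R^n)$, which is \ref{invi} for these operators (for $\mathcal Z_A$ one first notes, via \eqref{Pm10}, that $\mathscr P_{m}(k,\mu,\C^n,\R^n)\subset\mathscr P_{-1}(k,0,\C^n,\R^n)$, so that $\mathcal Z_Ap$ is defined). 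Since the exponent set is literally unchanged, if moreover $S\subset\mathcal E(m,k,\mu)$ then so is the exponent set of $\mathcal M_jp$ and of $\mathcal Z_Ap$, which gives \ref{invii} at once; alternatively one may invoke property \ref{R0} and intersect.

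It remains to treat $\mathcal R$, the only place requiring a little bookkeeping, because \eqref{chiz} shifts exponents. Write $\mathcal R=\sum_{j=0}^{k}\mathcal R_j$, where $(\mathcal R_jp)(z)=z_0^{-1}z_1^{-1}\cdots z_j^{-1}(\mathcal M_jp)(z)=\sum_{\alpha\in S}\alpha_j\,z^{\alpha-\sigma_j}\xi_\alpha$ and $\sigma_j\in\R^{k+2}$ is the real vector equal to $1$ in the slots $0,1,\dots,j$ and $0$ elsewhere. Because $\sigma_j$ is real, the translation $\alpha\mapsto\alpha-\sigma_j$ commutes with conjugation, so the exponent set $\{\alpha-\sigma_j:\alpha\in S\}$ of $\mathcal R_jp$ is again conjugation-symmetric, and the coefficient attached to $\overline{\alpha-\sigma_j}=\bar\alpha-\sigma_j$ is $\bar\alpha_j\,\xi_{\bar\alpha}=\overline{\alpha_j\xi_\alpha}$; collecting like terms does not disturb this, since conjugation is an involution on the exponent set. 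Thus each $\mathcal R_jp\in\mathscr P(k,\C^n,\R^n)$, and summing over $j$ inside the additive subgroup $\mathscr P(k,\C^n,\R^n)$ yields $\mathcal Rp\in\mathscr P(k,\C^n,\R^n)$, which is \ref{invi} for $\mathcal R$. For \ref{inviii} one combines this with property \ref{R2}, which already places $\mathcal Rp$ in $\mathscr P_0(k,\mu-1,\C^n)$ whenever $p\in\mathscr P_0(k,\mu,\C^n)$; intersecting the two memberships gives $\mathcal Rp\in\mathscr P_0(k,\mu-1,\C^n,\R^n)$. I do not expect a real obstacle: beyond the complex statements already proved, the lemma is pure bookkeeping, and the only genuinely substantive point is that the realness of $A$ forces $\mathcal Z_A$ to commute with complex conjugation.
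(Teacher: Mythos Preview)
Your proof is correct and follows essentially the same route as the paper: verify part \ref{invi} by checking that the conjugation-symmetry \eqref{xiconj}/\eqref{newcri} is preserved under $\mathcal M_j$, $\mathcal R$ (decomposed as a sum over $j$), and $\mathcal Z_A$ (using that $A$ is real), and then deduce \ref{invii} and \ref{inviii} from \ref{invi} together with properties \ref{R0}--\ref{R2}. Your treatment of $\mathcal R$ via the real shift vector $\sigma_j$ commuting with conjugation is a mild notational streamlining of the paper's explicit pairing $p_\alpha(z)=z^\alpha\xi_\alpha+z^{\bar\alpha}\overline{\xi_\alpha}$, but the underlying argument is identical.
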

\begin{proof}
We prove part \ref{invi}, while  parts \ref{invii} and \ref{inviii} are consequences of \ref{invi} and properties \ref{R0}--\ref{R2} stated after Definition \ref{defMRZ}.

\medskip
We look into $\mathcal M_j$ first.
Let $p\in \mathscr P(k,\C^n,\R^n)$.
Assume $p(z)$ is given by \eqref{Rpzdef} with a finite set $S$  in $\C^{k+2}$ preserving the conjugation, and $\xi_\alpha\in \C^n$, $\xi_{\bar\alpha}=\overline{\xi_\alpha}$ for all $\alpha\in S$. 
Write
\beqs
\mathcal M_jp(z)=\sum_{\alpha\in S} f_\alpha(z),\text{ where } f_\alpha(z)=\alpha_j z^\alpha \xi_\alpha.
\eeqs 

Since
\beqs
f_{\bar \alpha}(z)=\bar \alpha_j z^{\bar \alpha} \xi_{\bar \alpha}
=\overline{ \alpha_j} \, \overline{z^{\alpha}}\, \overline{ \xi_\alpha}
=\overline{f_{\alpha}(z)},
\eeqs
we obtain, thanks to the equivalent criterion \eqref{newcri},  $\mathcal M_jp\in \mathscr P(k,\C^n,\R^n)$. 

\medskip
We now check with $\mathcal R$ defined by \eqref{chiz} for $k\ge 0$. We write
$\mathcal Rp=\sum_{j=0}^k \mathcal N_j p$, where $\mathcal N_j$, for each $0\le j\le k$, is the linear transformation  on $\mathscr P(k,\C^n)$ defined by
\beqs
(\mathcal N_j p)(z)=z_0^{-1}z_1^{-1}\ldots z_j^{-1}\mathcal M_jp(z) \text{ for } p\in \mathscr P(k,\C^n).
\eeqs

For  $p\in \mathscr P(k,\C^n,\R^n)$, we write $p$ as the finite sum $p=\sum_{\alpha} p_\alpha$, where
each $p_\alpha$ belongs to $\mathscr P(k,\C^n,\R^n)$ and is of the form
$p_\alpha(z)=z^\alpha\xi_\alpha+z^{\bar\alpha}\overline{ \xi_{\alpha}}$.
We have 
\beqs
\mathcal N_j p_\alpha(z)
=z_0^{-1}z_1^{-1}\ldots z_j^{-1}(\alpha_j z^\alpha \xi_{\alpha}
+  \bar\alpha_j z^{\bar \alpha} \overline{\xi_\alpha})
=g(z)+h(z),
\eeqs 
where 
\begin{align*}
g(z)&=\alpha_j z^\beta \xi_\alpha\quad\text{ with $\beta=\alpha-(e_0+e_1+\ldots+e_j)$, }\\
h(z)&=\bar\alpha_j z^\gamma \overline{ \xi_\alpha}\quad\text{ with $\gamma=\bar\alpha-(e_0+e_1+\ldots+e_j)$.}
\end{align*}
Above, $\{e_{-1},e_0,\ldots,e_k\}$ is the canonical basis of $\C^{k+2}$.

Note that $\gamma=\bar \beta$ and
$h(z)=\bar\alpha_j z^{\bar \beta} \overline{\xi_\alpha}=\overline{g(z)}$. Thus, by criterion \eqref{newcri}, $\mathcal N_jp_\alpha\in\mathscr P(k,\C^n,\R^n)$.

Then summing up over finitely many $\alpha$'s   yields $\mathcal N_jp\in\mathscr P(k,\C^n,\R^n)$.
Finally,   thanks to the group property  of $\mathscr P(k,\C^n,\R^n)$,  summing up in $j$ gives  $\mathcal Rp=\sum_{j=0}^k \mathcal N_j p\in\mathscr P(k,\C^n,\R^n)$.

\medskip
Consider formula \eqref{ZAp} of $\mathcal Z_Ap$ with $p\in \mathscr P_{-1}(k,0,\C^n,\R^n)$, for $k\ge -1$, given by \eqref{Rpzdef}--\eqref{xiconj} and $\alpha\in \mathcal E(-1,k,0)$ for any $\alpha\in S$.
We  write
\beq\label{ZpfR}
\mathcal Z_Ap(z)=\sum_{\alpha\in S} f_\alpha(z),\text{ where } f_\alpha(z)=z^\alpha(A+\alpha_{-1}I_n)^{-1}  \xi_\alpha.
\eeq 

Noticing that $A=\bar A$, one has
\begin{align*}
f_{\bar \alpha}(z)=z^{\bar \alpha}(A+\overline{ \alpha_{-1}}I_n)^{-1} \xi_{\bar \alpha}
&=\overline{z^{\alpha}}\, \overline{(A+\alpha_{-1}I_n)^{-1}}\, \overline{\xi_{\alpha}}
=\overline{f_\alpha(z)}.
\end{align*}

Together with formula \eqref{ZpfR} and criterion \eqref{newcri},  this implies $\mathcal Z_Ap\in\mathscr P_{-1}(k,0,\C^n,\R^n)$.
\end{proof}

\begin{theorem}\label{newthmPL}
We have the following counterparts of Theorems \ref{mainthm2}  and \ref{mainthm3}.

\begin{enumerate}[label=\tnum]
\item\label{newP} 
Theorem \ref{mainthm2} holds true when we replace $ \mathscr F_{0}(n_k,-\mu_k,\C^n)$ with $ \mathscr F_{0}(n_k,-\mu_k,\C^n,\R^n)$ in \eqref{fmu2} and \eqref{yF0}, and replace   $ \mathscr P_{0}(n_k,-\mu_k,\C^n)$ with $ \mathscr P_{0}(n_k,-\mu_k,\C^n,\R^n)$ in \eqref{fkLp}.

\item\label{newL} 
Theorem \ref{mainthm3}  holds true when we replace $ \mathscr F_{m_*}(n_k,-\mu_k,\C^n)$ with $ \mathscr F_{m_*}(n_k,-\mu_k,\C^n,\R^n)$ in \eqref{fmu3} and \eqref{yFm}, and replace   $ \mathscr P_{m_*}(n_k,-\mu_k,\C^n)$ with $ \mathscr P_{m_*}(n_k,-\mu_k,\C^n,\R^n)$ in \eqref{fkLm}.
\end{enumerate}
\end{theorem}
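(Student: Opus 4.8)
The plan is to mirror the proof of Theorem~\ref{newthmE}, transferring the complex-valued constructions of Theorems~\ref{mainthm2} and~\ref{mainthm3} to the real setting by invoking the conjugation-invariance machinery developed in Lemma~\ref{invar2} together with the analogue of Lemma~\ref{invar1} for the $\mathscr P$-classes. Concretely, for part~\ref{newP} I would run the induction of Theorem~\ref{mainthm2} verbatim, replacing every occurrence of $\mathcal G_m$ by its complexification $\mathcal G_{m,\C}$ (exactly as in the proof of Theorem~\ref{newthmE}). The point is that all the building blocks used to construct $q_k$ in \eqref{ykdirect} --- namely $\mathcal Z_A$, $\mathcal R$, the multilinear maps $\mathcal G_{m,\C}$, and the addition/unit-increment structure on $\mathcal S$ --- preserve the subclass of conjugation-symmetric functions. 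So if all $p_k$ lie in $\mathscr P_{0}(n_k,-\mu_k,\C^n,\R^n)$, an induction on $k$ shows $\chi_k$ and $q_k$ lie there too: by Lemma~\ref{invar2}\ref{inviii}, $\mathcal R q_\lambda$ stays in the real class; by \eqref{MCbar} the term $\mathcal G_{m,\C}(q_{j_1},\ldots,q_{j_m})$ is again conjugation-symmetric since each $q_{j_\ell}$ is (the double sum over $\mu_{j_1}+\cdots+\mu_{j_m}=\mu_k$ is invariant under conjugating all the $\xi$-coefficients because the set of multi-indices is a purely combinatorial, conjugation-blind object); and by Lemma~\ref{invar2}\ref{invii} applying $\mathcal Z_A$ keeps us in $\mathscr P_{0}(n_k,-\mu_k,\C^n,\R^n)$. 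Hence each $y_k = q_k\circ\widehat\LL_{n_k}$ is $\R^n$-valued, and since then $\mathcal G_{m,\C}(y_{j_1},\ldots,y_{j_m})=\mathcal G_m(y_{j_1},\ldots,y_{j_m})$, equations \eqref{ykdirect}--\eqref{chikdef} hold as stated.

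For part~\ref{newL} I would do the same with the proof of Theorem~\ref{mainthm3}: the only difference from the power-decay case is that the operator $\mathcal R$ has been absorbed into the $\bigo(t^{-\gamma})$ error via \eqref{dq1}, so the $\chi_k$ terms are dropped and formula \eqref{ykdirect} simplifies to \eqref{qk3}. The induction that $q_k\in\mathscr P_{m_*}(n_k,-\mu_k,\C^n,\R^n)$ is even cleaner here, using only Lemma~\ref{invar2}\ref{invii} (for $\mathcal Z_A$) and \eqref{MCbar}. In both parts, once the $y_k$ have been produced with the asserted decay properties, I would invoke the real-analogue of Lemma~\ref{invar1}\ref{ivb} --- i.e. that the uniqueness of the asymptotic expansions in Definition~\ref{Lexpand} forces a conjugation-symmetric (hence real-valued) expansion when $y(t)$ itself is $\R^n$-valued --- as an independent confirmation that the $y_k$ obtained from the recursion must be the real-valued ones. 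Actually the recursion already delivers real-valued $y_k$, so the uniqueness argument is a consistency check rather than a logical necessity, but it is worth stating, and I would note that the uniqueness used is that of expansions of type \eqref{fiter}/\eqref{expan4}, which follows from Lemma~\ref{uniE}-style reasoning applied at each decay rate $\iln_{m_*}(t)^{-\mu_k}$ combined with \eqref{LLo}.

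The bookkeeping obstacle --- and the only place that needs genuine care rather than routine citation --- is verifying that the inductive claim ``$q_k$ and $\chi_k$ belong to the real subclass'' is actually well-founded, i.e. that whenever $q_k$ is defined by \eqref{ykdirect} the indices $j_1,\ldots,j_m$ and $\lambda$ appearing on the right are all strictly less than $k$, so that the induction hypothesis applies to them. This is exactly remarks~\ref{Tb} and \ref{8d} after Theorems~\ref{mainthm} and~\ref{mainthm2}: $m\ge 2$ and $\mu_{j_\ell}>0$ force $\mu_{j_\ell}<\mu_k$ hence $j_\ell\le k-1$, and similarly $\mu_\lambda+1=\mu_k$ forces $\lambda\le k-1$. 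I would state this explicitly so the induction is airtight, then observe that the base case $k=1$ is immediate since $q_1=\mathcal Z_A p_1$ and $p_1$ is assumed to be in the real subclass, with $\chi_1=0$ (which is trivially conjugation-symmetric). The remaining subtlety is purely notational: one must check that replacing $\mathcal G_m$ by $\mathcal G_{m,\C}$ throughout the proof of Theorem~\ref{mainthm2}/\ref{mainthm3} does not disturb any of the $\bigo$-estimates, which is clear since $\mathcal G_{m,\C}$ restricted to real arguments equals $\mathcal G_m$ and the estimates \eqref{multineq}, \eqref{multy} hold for the complexified maps with the same norms.
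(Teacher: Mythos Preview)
Your proposal is correct and follows essentially the same approach as the paper: run the proof of Theorem~\ref{mainthm2} (resp.~\ref{mainthm3}) with $\mathcal G_m$ replaced by $\mathcal G_{m,\C}$, then prove by induction on $k$ that each $q_k$ lies in the real subclass using Lemma~\ref{invar2} for $\mathcal Z_A$ and $\mathcal R$, and the conjugation property \eqref{MCbar} for the multilinear terms. The paper carries out the $\mathcal G_{m,\C}$ step in slightly more detail---expanding into explicit monomial pairs $g(z)=z^\beta\mathcal G_{m,\C}(\xi_{\alpha_{(1)}},\ldots,\xi_{\alpha_{(m)}})$ and $h(z)=z^{\bar\beta}\overline{\mathcal G_{m,\C}(\xi_{\alpha_{(1)}},\ldots,\xi_{\alpha_{(m)}})}$ to verify the representation \eqref{Rpzdef}--\eqref{xiconj} directly---whereas you invoke \eqref{MCbar} at the level of the whole expression; but this is the same argument, and the paper does not use (or state) any uniqueness result for Definition~\ref{Lexpand} expansions, so your ``consistency check'' is indeed superfluous, as you note.
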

\begin{proof}
We prove part \ref{newP}. 
Assume \eqref{fkLp} with $p_k\in \mathscr P_{0}(n_k,-\mu_k,\C^n,\R^n)$ for all $k\in\N$.
By combining the proof of Theorem  \ref{mainthm2}  with the argument in the proof of Theorem \ref{newthmE}, we obtain asymptotic expansion \eqref{yykex2} with  \eqref{ykLq}, where 
\beq \label{newqk}
q_k  
=\mathcal Z_A\left( \sum_{m\ge 2}\ \sum_{\mu_{j_{1}}+\mu_{j_{2}}+\ldots \mu_{j_{m}}=\mu_k} \mathcal G_{m,\C}( q_{j_{1}}, q_{j_{2}},\ldots ,  q_{j_{m}}) +p_k-\chi_k\right ),
\eeq  
for $\chi_k$ being given by \eqref{chikdef}. That is, $\mathcal G_{m,\C}$ in \eqref{newqk} replaces $\mathcal G_m$ in \eqref{ykdirect}.

We examine the construction of $q_k$'s in \eqref{newqk}. 
By property \ref{8a} after Theorem \ref{mainthm2}, $q_1=\mathcal Z_Ap_1$.
Because $p_1\in  \mathscr P_{0}(n_1,-\mu_1,\C^n,\R^n)$, then by applying  Lemma \ref{invar2}, we have $q_1\in  \mathscr P_{0}(n_1,-\mu_1,\C^n,\R^n)$.

Let $k\ge 2$. Suppose $q_j\in\mathscr P_{0}(n_j,-\mu_j,\C^n,\R^n)$ for $1\le j\le k-1$.

In \eqref{newqk}, we already know 
$p_k\in\mathscr P_{0}(n_k,-\mu_k,\C^n,\R^n)$.

By Lemma \ref{invar2}, $\mathcal R q_\lambda$ in \eqref{chikdef} belongs to $\mathscr P_{0}(n_k,-\mu_k,\C^n,\R^n)$. Therefore,   $\chi_k$  in \eqref{newqk}  belongs to $\mathscr P_{0}(n_k,-\mu_k,\C^n,\R^n)$.

Consider  each $\mathcal G_{m,\C}( q_{j_{1}}, q_{j_{2}},\ldots ,  q_{j_{m}}) $ in \eqref{newqk}.
By the distributive property of $\mathcal G_{m,\C}$ with respect to the addition and the form \eqref{Rpzdef} of each $q_{j_{\ell}}$, it suffices to examine the following pairs
$$g(z)=\mathcal G_{m,\C}( z^{\alpha_{(1)}} \xi_{\alpha_{(1)}}, z^{\alpha_{(2)}} \xi_{\alpha_{(2)}}, \ldots,
z^{\alpha_{(m)}} \xi_{\alpha_{(m)}}) $$
and 
$$h(z)=\mathcal G_{m,\C}( z^{\overline{\alpha_{(1)}}} \xi_{\overline{\alpha_{(1)}}}, z^{\overline{\alpha_{(2)}}} \xi_{\overline{\alpha_{(2)}}}, \ldots,
z^{\overline{\alpha_{(m)}} }\xi_{\overline{\alpha_{(m)}}}) .$$

We calculate
$$g(z)=z^\beta \mathcal G_{m,\C}( \xi_{\alpha_{(1)}}, \xi_{\alpha_{(2)}}, \ldots,\xi_{\alpha_{(m)}}) , $$
where
$\beta=\alpha_{(1)} +\alpha_{(2)}+\ldots+\alpha_{(m)}$,
and 
$$h(z)=z^\gamma \mathcal G_{m,\C}( \xi_{\overline{\alpha_{(1)}}},  \xi_{\overline{\alpha_{(2)}}}, \ldots,\xi_{\overline{\alpha_{(m)}}}),$$
where
$\gamma={\overline{\alpha_{(1)}}}+{\overline{\alpha_{(2)}}}+\ldots+{\overline{\alpha_{(m)}}}$.

Clearly, $\gamma=\bar\beta$, and by \eqref{MCbar},
\beqs
h(z)=z^{\bar\beta}\mathcal G_{m,\C}(\overline{ \xi_{\alpha_{(1)}}}, \overline{ \xi_{\alpha_{(2)}}}, \ldots,\overline{\xi_{\alpha_{(m)}}})   
=\overline{ z^\beta}\,\, \overline{\mathcal G_{m,\C}( \xi_{\alpha_{(1)}}, \xi_{\alpha_{(2)}}, \ldots,\xi_{\alpha_{(m)}})}
=\overline{g(z)}.
\eeqs

Thus, $\mathcal G_{m,\C}( q_{j_{1}}, q_{j_{2}},\ldots ,  q_{j_{m}}) $ belongs to $ \mathscr P_{0}(n_k,-\mu_k,\C^n,\R^n)$. 

Summing up, we obtain
\beqs
\sum_{m\ge 2}\ \sum_{\mu_{j_{1}}+\mu_{j_{2}}+\ldots \mu_{j_{m}}=\mu_k} \mathcal G_{m,\C}( q_{j_{1}}, q_{j_{2}},\ldots ,  q_{j_{m}}) +p_k-\chi_k \in  \mathscr P_{0}(n_k,-\mu_k,\C^n,\R^n). 
\eeqs

Applying $\mathcal Z_A$ to this element and using Lemma \ref{invar2}, we have $q_k\in \mathscr P_{0}(n_k,-\mu_k,\C^n,\R^n)$. 

By the Induction Principle, $q_k\in \mathscr P_{0}(n_k,-\mu_k,\C^n,\R^n)$ for all $k\in\N$. 

Now that all $q_k$'s are real-valued, we can replace $\mathcal G_{m,\C}$ in \eqref{newqk} with $\mathcal G_m$ and obtain \eqref{ykdirect}.
This completes the proof of part \ref{newP}.

The proof of  part \ref{newL} is similar by neglecting the terms $\chi_k$'s.
\end{proof}

In the above proof of Theorem \ref{newthmPL},  formulas \eqref{ykdirect} and \eqref{newqk} are the same. However, fornula \eqref{newqk} is preferred in manipulating the complex powers in order to perform the operator $\mathcal Z_A$.

In Theorems \ref{newthmE} and \ref{newthmPL}, the functions in the asymptotic expansions are still expressed by the use of complex numbers. Below, we will remove such expressions and write the results in terms of real-valued functions only.

\begin{definition}\label{polyS}
Let $X$ be a linear space over the field $\K=\R$ or $\K=\C$.
\begin{enumerate}[label=\tnum]
 \item A function $g:\R\to X$ is an $X$-valued S-polynomial if it is a finite sum of the functions in the set
\beqs
\Big \{ t^m \cos(\omega t)Z,\ t^m \sin(\omega t) Z: m\in\Z_+,\ \omega\in\R, \ Z\in X\Big\}.
\eeqs

\item Denote by $\mathcal F_0(X)$, respectively, $\mathcal F_1(X)$ the set of all $X$-valued polynomials, respectively, S-polynomials.
\end{enumerate}
\end{definition}

By the virtue of Lemma \ref{invar1}\ref{iva} and Euler's formula, one has
\beq\label{FF}
\mathcal F_E(0,\C^n,\R^n)=\mathcal F_1(\R^n).
\eeq

Let $(X,\|\cdot\|_X)$ be a normed space,
and  $(\gamma_k)_{k=1}^\infty$ be the same as in Definition \ref{EEdef}\ref{EE1}. we define the asymptotic expansion
\beq\label{realEx}
g(t)\sim \sum_{k=1}^\infty \widehat g_k(t)e^{-\gamma_k t}, \text{ where } \widehat g_k\in \mathcal F_1(X) \text{ for }k\in\N,
\eeq
in the same way as  \eqref{expan2}--\eqref{gdrem2}.

Therefore, one can use relation \eqref{FF} and asymptotic expansion \eqref{realEx} to restate Theorem \ref{newthmE} as the following.

\begin{theorem}\label{thmE3}
Let $(\mu_k)_{k=1}^\infty$ be the same as in Assumption \ref{fEas}.
If $f(t)$ has the following asymptotic expansion
\beq\label{freal}
f(t)\sim \sum_{k=1}^\infty \widehat f_k(t)e^{-\mu_k t}, \text{ where } \widehat f_k\in \mathcal F_1(\R^n) \text{ for }k\in\N,
\eeq
then the solution $y(t)$ admits the asymptotic expansion
\beq\label{yreal}
y(t)\sim \sum_{k=1}^\infty \widehat q_k(t)e^{-\mu_k t},  \text{ with }\widehat q_k\in \mathcal F_1(\R^n) \text{ for }k\in\N.
\eeq
\end{theorem}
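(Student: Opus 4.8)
The plan is to obtain Theorem \ref{thmE3} directly from Theorem \ref{newthmE} by passing through the equivalences \eqref{ghequiv} and \eqref{FF}; no new analysis is needed, only a translation of notation. First I would reinterpret the hypothesis \eqref{freal}. Given $\widehat f_k\in\mathcal F_1(\R^n)$, relation \eqref{FF} gives $\widehat f_k\in\mathcal F_E(0,\C^n,\R^n)$, and then the equivalence \eqref{ghequiv} (used with $\mu=-\mu_k$) shows that $f_k(t):=\widehat f_k(t)e^{-\mu_k t}$ belongs to $\mathcal F_E(-\mu_k,\C^n,\R^n)$. Since $(\mu_k)_{k=1}^\infty$ is the sequence from Assumption \ref{fEas}, the set $\mathcal S=\{\mu_k:k\in\N\}$ preserves the addition and contains $\Re\sigma(A)$, and the expansion \eqref{freal}, stated in the sense of \eqref{realEx} (which is \eqref{expan2} verbatim), is exactly the expansion \eqref{fmu} in the sense of Definition \ref{EEdef}. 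Recalling that Assumption \ref{realassum} is in force throughout this section (so in particular $A$, $G$, $\mathcal G_m$, $f$ and $y$ are real), Assumptions \ref{assumpf}, \ref{fEas} and \ref{assumpu} all hold in the $\R^n$-setting required by Theorem \ref{newthmE}.

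Next I would invoke Theorem \ref{newthmE}: it produces functions $y_k\in\mathcal F_E(-\mu_k,\C^n,\R^n)$ for $k\in\N$ such that $y(t)\sim\sum_{k=1}^\infty y_k(t)$ in the sense of Definition \ref{EEdef}. Applying \eqref{ghequiv} once more in the reverse direction, I write $y_k(t)=\widehat q_k(t)e^{-\mu_k t}$ with $\widehat q_k\in\mathcal F_E(0,\C^n,\R^n)=\mathcal F_1(\R^n)$, the last equality again by \eqref{FF}. Unwinding the definition of \eqref{realEx}, the expansion $y(t)\sim\sum_{k=1}^\infty \widehat q_k(t)e^{-\mu_k t}$ means precisely that for each $N$ there is $\mu>\mu_N$ with $|y(t)-\sum_{k=1}^N\widehat q_k(t)e^{-\mu_k t}|=\bigo(e^{-\mu t})$, which is exactly what Theorem \ref{newthmE} guarantees. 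This yields \eqref{yreal} and completes the argument.

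The only point requiring care — rather than a genuine obstacle — is checking that the two descriptions of the asymptotic expansions coincide: \eqref{freal} and \eqref{yreal} are phrased with $\mathcal F_1(\R^n)$-valued coefficients multiplying $e^{-\mu_k t}$, whereas Theorems \ref{mainthm} and \ref{newthmE} are phrased with summands $g_k\in\mathcal F_E(-\mu_k,\cdot)$. Remark \ref{equivrmk} already records that these two forms are the same, and \eqref{FF} is what bridges the real-coefficient description via S-polynomials ($\mathcal F_1$) with the complex-exponential description ($\mathcal F_E(0,\C^n,\R^n)$). Since all spaces here are finite-dimensional and all operations linear, no convergence or norm-equivalence subtlety intervenes, so the proof is a short bookkeeping argument.
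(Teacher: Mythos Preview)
Your proposal is correct and matches the paper's approach exactly: the paper presents Theorem \ref{thmE3} as a direct restatement of Theorem \ref{newthmE} via relation \eqref{FF} and the equivalent expansion form \eqref{realEx}, with no separate proof given. Your write-up simply spells out this translation carefully.
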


Note that even if $\widehat f_k\in \mathcal F_0(\R^n)$ for all $k\in\N$ in \eqref{freal}, we can only conclude $\widehat q_k\in \mathcal F_1(\R^n)$ in \eqref{yreal}.
 
We turn to Theorem \ref{newthmPL} now. 
We will characterize the classes $ \mathscr F_{m}(k,0,\C^n,\R^n)$  more explicitly using real-valued  functions only.

\begin{definition}\label{realPL}
Given integers $k\ge m\ge 0$.
Define the class $\mathcal P_m^1(k,\R^n)$ to be the collection of functions which are the finite sums of the following functions
\beq\label{realpz}
z=(z_{-1},z_0,\ldots,z_k)\in (0,\infty)^{k+2}\mapsto z^\alpha \prod_{j=0}^k \sigma_j(\omega_j z_j)\xi,
\eeq
where $\xi\in\R^n$, 
 $\alpha\in \mathcal E(m,k,0)\cap\R^{k+2}$, 
$\omega_j$'s are real numbers,  
and, for each $j$,  either $\sigma_j=\cos$ or $\sigma_j=\sin$.

Define the class $\mathcal P_m^0(k,\R^n)$ to be the subset of $\mathcal P_m^1(k,\R^n)$ when all $\omega_j$'s in \eqref{realpz} are zero. 
\end{definition}

The results with the classes $\mathcal P_m^0(k,\R^n)$, for a real diagonalizable matrix $A$, were established in \cite{CaH3}. We focus on the classes $\mathcal P_m^1(k,\R^n)$ here.

Note in \eqref{realpz} that $\Re(\alpha_{-1})=\Re(\alpha_0)=\ldots=\Re(\alpha_m)=0$. Hence, the class $\mathcal P_m^1(k,\R^n)$  is related to $\mathscr P_{m}(k,0,\C^n,\R^n) $.

Let $m\in \Z_+$, $k\ge m$, $-1\le j\le k$ and  $\omega\in\R$. For $\xi=x+iy\in\C^n$ with $x,y\in \R^n$, one has
\beqs
\iln_j(t)^{i\omega}\xi+\iln_j(t)^{-i\omega}\bar \xi
=2(\cos(\omega \iln_{j+1}(t))x-\sin(\omega \iln_{j+1}(t))y).
\eeqs

Consequently, one can prove by induction that if $p\in \mathscr P_{m}(k,0,\C^n,\R^n) $ then
\beq\label{PPR2}
p\circ  \widehat \LL_k=q\circ  \widehat \LL_{k+1}\text{ for some }q\in \mathcal P_{m}^1(k+1,\R^n).
\eeq

In particular, 
\beq\label{spec4}
q\in \mathcal P_{m}^1(k,\R^n) \text{ provided } 
p(z)=\sum z^\alpha \xi_\alpha, \text{ where $\alpha=(\alpha_{-1},\alpha_0,\ldots,\alpha_{k})$ with $\Im(\alpha_{k})=0$.} 
\eeq
(For, there is no term $\iln_{k+1}(t)^\alpha$ in $p\circ  \widehat \LL_k(t)$, and there is no term $\iln_k(t)^{i\omega}$ in $p\circ  \widehat \LL_k(t)$ to contribute to $\cos(\omega \iln_{j+1}(t))$ and $\sin(\omega \iln_{j+1}(t))$ in $q\circ  \widehat \LL_k(t)$.)

We now observe that
\beqs
 \cos(\omega\iln_j(t))=\frac12\left(e^{i\omega\iln_j(t)}+e^{-i\omega\iln_j(t)}\right)
 =\frac12 \left(\iln_{j-1}(t)^{i\omega}+\iln_{j-1}(t)^{-i\omega }\right),
 \eeqs
and, similarly,
 \beqs
 \sin(\omega\iln_j(t))=\frac1{2i}\left( \iln_{j-1}(t)^{i\omega}- \iln_{j-1}(t)^{-i\omega }\right).
 \eeqs
 
 Therefore, 
 \beq\label{sincos1}
  \cos(\omega\iln_j(t))=g(\widehat \LL_k(t))\text{ and }\sin(\omega\iln_j(t))=h(\widehat \LL_k(t))
   \eeq
 where
 \beq\label{sincos2} 
 g(z)=\frac12(z_{j-1}^{i\omega}+z_{j-1}^{-i\omega})\text{ and }
h(z)=\frac1{2i}(z_{j-1}^{i\omega}-z_{j-1}^{-i\omega}).
\eeq

Obviously,
\beq\label{sincos3} 
g,h\in \mathscr P_{m}(k,0,\C,\R).
\eeq

Using properties \eqref{sincos1}, \eqref{sincos2}, \eqref{sincos3} and the same arguments in Theorem \ref{newthmPL} to  prove $\mathcal G_{m,\C}( q_{j_{1}}, q_{j_{2}},\ldots ,  q_{j_{m}}) $ belongs to $ \mathscr P_{0}(n_k,-\mu_k,\C^n,\R^n)$, one can verify that
if $p\in \mathcal P_{m}^1(k,\R^n) $ then
\beq\label{PPR1}
p\circ  \widehat \LL_k=q\circ  \widehat \LL_k\text{ for some }q\in \mathscr P_{m}(k,0,\C^n,\R^n).
\eeq

More specifically, 
\beq\label{spec3}
q(z)=\sum z^\alpha \xi_\alpha, \text{ where $\alpha=(\alpha_{-1},\alpha_0,\ldots,\alpha_{k})$ with $\Im(\alpha_{k})=0$.} 
\eeq

The last condition is due to the fact that  the functions  $\cos(\omega\iln_k(t))$ and $\sin(\omega\iln_k(t))$ can be converted via  \eqref{sincos1} and \eqref{sincos2}, when $j=k$,  using the functions of   the variable $z_{k-1}$.

Let $m_*$, $(\gamma_k)_{k=1}^\infty$ and $(n_k)_{k=1}^\infty$ be the same as in Definition \ref{Lexpand}\ref{LE1}.
We say a function $g:(T,\infty)\to\R^n$, for some $T\in\R$, has an asymptotic expansion
\beq\label{realLE}
g(t)\sim \sum_{k=1}^\infty \widehat g_k(\widehat\LL_{n_k}(t))\iln_{m_*}(t)^{-\gamma_k}, \text{ where $\widehat g_k\in \mathcal P_{m_*}^1(n_k,\R^n) $ for $k\in\N$, }
\eeq
if, for each $N\in\N$, there is some $\mu>\gamma_N$ such that
\beqs
\left |g(t) - \sum_{k=1}^N \widehat g_k(\widehat\LL_{n_k}(t))\iln_{m_*}(t)^{-\gamma_k}\right |=\bigo(\iln_{m_*}(t)^{-\mu}).
\eeqs

We can now  restate Theorem \ref{newthmPL} using the class $\mathcal P_m^1(k,\R^n)$ and the expansion form \eqref{realLE} as the following.

\begin{theorem}\label{thmPL3}
Given $m_*\in\Z_+$. Let $(\mu_k)_{k=1}^\infty$ and $(n_k)_{k=1}^\infty$ be the same as in Assumption \ref{fLas} if $m_*=0$, and  be the same as in Assumption \ref{fLas2} if $m_*\ge 1$.
If $f(t)$ has the asymptotic expansion
\beq\label{freal2}
f(t)\sim \sum_{k=1}^\infty \widehat p_k(\widehat \LL_{n_k}(t))\iln_{m_*}(t)^{-\mu_k}, 
\text{ where } \widehat p_k\in \mathcal P_{m_*}^1(n_k,\R^n) \text{ for }k\in\N,
\eeq
then the solution $y(t)$ admits the asymptotic expansion
\beq\label{yreal2}
y(t)\sim \sum_{k=1}^\infty \widehat q_k(\widehat \LL_{n_k}(t))\iln_{m_*}(t)^{-\mu_k}, 
 \text{ with }\widehat q_k\in \mathcal P_{m_*}^1(n_k,\R^n)  \text{ for }k\in\N.
\eeq
\end{theorem}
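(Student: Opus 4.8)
The plan is to deduce Theorem \ref{thmPL3} from Theorem \ref{newthmPL} by passing back and forth between the complex-valued classes $\mathscr P_{m_*}(n_k,\mu,\C^n,\R^n)$ and the real-valued classes $\mathcal P_{m_*}^1(n_k,\R^n)$, using the dictionary already assembled in \eqref{PPR1}--\eqref{spec3} and \eqref{PPR2}--\eqref{spec4}. Throughout, Assumption \ref{realassum} is in force, so $A$, $G$, the $\mathcal G_m$'s, $f$ and $y$ are all real and Assumptions \ref{assumpf}, \ref{assumpu} hold; moreover $(\mu_k)_{k=1}^\infty$ and $(n_k)_{k=1}^\infty$ are those of Assumption \ref{fLas} (if $m_*=0$) or of Assumption \ref{fLas2} (if $m_*\ge 1$), so the semigroup properties of $\mathcal S=\{\mu_k:k\in\N\}$ needed by Theorem \ref{newthmPL} are available.

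First I would translate the hypothesis \eqref{freal2}. For each $k$, apply \eqref{PPR1}--\eqref{spec3} to $\widehat p_k\in\mathcal P_{m_*}^1(n_k,\R^n)$ to obtain $\tilde p_k(z)=\sum_\alpha z^\alpha\xi_\alpha$ in $\mathscr P_{m_*}(n_k,0,\C^n,\R^n)$, with $\Im(\alpha_{n_k})=0$ for every monomial, and $\widehat p_k\circ\widehat\LL_{n_k}=\tilde p_k\circ\widehat\LL_{n_k}$. Multiplying each monomial by $z_{m_*}^{-\mu_k}$ (equivalently, replacing $\alpha_{m_*}$ by $\alpha_{m_*}-\mu_k$) produces a function $\mathbf p_k\in\mathscr P_{m_*}(n_k,-\mu_k,\C^n,\R^n)$ — the conjugation symmetry and the reality of the top exponent survive because $\mu_k\in\R$ — and $f_k(t):=\widehat p_k(\widehat\LL_{n_k}(t))\,\iln_{m_*}(t)^{-\mu_k}=\mathbf p_k(\widehat\LL_{n_k}(t))$, so $f_k\in\mathscr F_{m_*}(n_k,-\mu_k,\C^n,\R^n)$. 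Since the remainder estimates defining \eqref{realLE} and those in Definition \ref{Lexpand}\ref{LE1} are the identical $\bigo(\iln_{m_*}(t)^{-\mu})$ bounds, the expansion \eqref{freal2} is exactly the expansion of Definition \ref{Lexpand}; hence all hypotheses of Theorem \ref{newthmPL}, part \ref{newP} (if $m_*=0$), resp.\ part \ref{newL} (if $m_*\ge 1$), hold.

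Applying that theorem yields $y_k\in\mathscr F_{m_*}(n_k,-\mu_k,\C^n,\R^n)$ with $y(t)\sim\sum_{k\ge1}y_k(t)$ in the sense of Definition \ref{Lexpand}, where $y_k=q_k\circ\widehat\LL_{n_k}$ and $q_k$ is built recursively by \eqref{ykdirect} (if $m_*=0$) or \eqref{qk3} (if $m_*\ge1$). The one genuine bookkeeping task is to prove, by induction on $k$, that every monomial $z^\alpha$ occurring in $q_k$ has $\Im(\alpha_{n_k})=0$. For $k=1$ this follows since $q_1=\mathcal Z_A\mathbf p_1$ and $\mathcal Z_A$ leaves all exponents unchanged (see \eqref{ZAp}). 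For the step: $p_k=\mathbf p_k$ has the property; $\chi_k$ is $\mathcal Rq_\lambda$ for some $\lambda<k$ with $n_\lambda\le n_k$, and $\mathcal R$ shifts exponents only by integers (property \ref{R1} after Definition \ref{defMRZ}) while the embedding from level $n_\lambda$ to level $n_k$ pads the extra exponents with $0$, so $\Im(\alpha_{n_k})=0$ is inherited; each $\mathcal G_{m,\C}(q_{j_1},\ldots,q_{j_m})$, with the factors regarded at level $n_k$, has exponent vector equal to the sum of the factors' exponent vectors, whose $n_k$-th entries are $0$ when $n_{j_\ell}<n_k$ and, by the induction hypothesis, purely real when $n_{j_\ell}=n_k$; finally $\mathcal Z_A$ again preserves exponents. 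Hence each $q_k\in\mathscr P_{m_*}(n_k,-\mu_k,\C^n,\R^n)$ with $\Im(\alpha_{n_k})=0$ for every monomial.

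It remains to go back to real-valued classes. Writing $q_k(z)=\tilde q_k(z)\,z_{m_*}^{-\mu_k}$ with $\tilde q_k\in\mathscr P_{m_*}(n_k,0,\C^n,\R^n)$ — which still has purely real $n_k$-th exponents since subtracting the real number $\mu_k$ from $\alpha_{m_*}$ changes no imaginary part — \eqref{spec3}--\eqref{spec4} furnish $\widehat q_k\in\mathcal P_{m_*}^1(n_k,\R^n)$ with $\tilde q_k\circ\widehat\LL_{n_k}=\widehat q_k\circ\widehat\LL_{n_k}$, so that $y_k(t)=q_k(\widehat\LL_{n_k}(t))=\widehat q_k(\widehat\LL_{n_k}(t))\,\iln_{m_*}(t)^{-\mu_k}$. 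Because the expansion $y(t)\sim\sum y_k(t)$ in the sense of Definition \ref{Lexpand} is, once more, literally the expansion \eqref{realLE}, this yields \eqref{yreal2} and finishes the proof. The main obstacle is precisely the inductive step in the previous paragraph: one must make sure that nothing in the recursion — in particular the cross-level products $\mathcal G_{m,\C}(q_{j_1},\ldots,q_{j_m})$ and the operator $\mathcal R$ buried in $\chi_k$ — can create an imaginary part in the top exponent $\alpha_{n_k}$; without this one would only land in $\mathcal P_{m_*}^1(n_k+1,\R^n)$ through \eqref{PPR2}, rather than in $\mathcal P_{m_*}^1(n_k,\R^n)$ as claimed.
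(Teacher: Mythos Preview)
Your proposal is correct and follows essentially the same route as the paper: convert the $\widehat p_k$'s to the complex classes via \eqref{PPR1}--\eqref{spec3}, invoke Theorem \ref{newthmPL}, and then return to $\mathcal P_{m_*}^1(n_k,\R^n)$ via \eqref{PPR2}--\eqref{spec4} after checking that the top exponent stays real through the recursion \eqref{ykdirect}/\eqref{qk3}. The paper's proof asserts this last preservation in one sentence, whereas you spell out the induction in detail (for $\mathcal Z_A$, $\chi_k=\mathcal Rq_\lambda$, and the products $\mathcal G_{m,\C}(q_{j_1},\ldots,q_{j_m})$); two minor citation quibbles are that property \ref{R1} concerns only $z_{-1}$ (the integer-shift claim you actually need is read off directly from \eqref{chiz}), and the back-conversion uses \eqref{PPR2}--\eqref{spec4} rather than \eqref{spec3}--\eqref{spec4}.
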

\begin{proof}
For each $k\in\N$, thanks to \eqref{PPR1} we have $\widehat p_k(\widehat \LL_{n_k}(t))=\widetilde p_k(\widehat \LL_{n_k}(t))$ for some 
$\widetilde p_k\in \mathscr P_{m_*}(n_k,0,\C^n,\R^n) $.
Applying Theorem \ref{newthmPL}, we obtain the asymptotic expansion
\beqs
y(t)\sim \sum_{k=1}^\infty \widetilde q_k(\widehat \LL_{n_k}(t))\iln_{m_*}(t)^{-\mu_k}, 
 \text{ where }\widetilde q_k\in  \mathscr P_{m_*}(n_k,0,\C^n,\R^n)   \text{ for }k\in\N.
\eeqs

Thanks to property \eqref{PPR2}, we have 
$$ \widetilde q_k(\widehat \LL_{n_k}(t))=\widehat q_k(\widehat \LL_{n_k+1}(t)) \text{ for some }\widehat q_k\in \mathcal P_{m_*}^1(n_k+1,\R^n).$$

We examine $\widehat q_k$ more closely. In fact, $\widetilde p_k$ has the  representation as in \eqref{spec3} with
$\alpha=(\alpha_{-1},\ldots,\alpha_{n_k})$  satisfying $\Im(\alpha_{n_k})=0$. By the recursive formula \eqref{ykdirect} for $m_*=0$ or \eqref{qk3} for $m_*\ge 1$, each $\widetilde q_k$ has the same property. By the virtue of \eqref{spec4}, we have $\widehat q_k\in \mathcal P_{m_*}^1(n_k,\R^n)$,  and hence, obtain  \eqref{yreal2}.
\end{proof}

Similar to the remark after Theorem \ref{thmE3}, even if $\widehat p_k\in \mathcal P_{m_*}^0(n_k,\R^n) $ in \eqref{freal2}, we, in general, can only have
$\widehat q_k\in \mathcal P_{m_*}^1(n_k,\R^n)$ in \eqref{yreal2}.

\medskip
Our general results -- Theorems \ref{mainthm}, \ref{mainthm2}, \ref{mainthm3}, \ref{newthmE}, \ref{newthmPL}, \ref{thmE3} and \ref{thmPL3} -- can have more specific forms in many different situations, see, for instance, \cite[Example 5.9--Example 5.12]{CaH3}.
Below, we demonstrate the last theorem with some examples (for the systems in $\R^n$.)

\begin{example}
If, for large $t>0$,
   \beqs
 f(t)=\frac{\cos(\alpha t)(\ln t) (\ln\ln t)^{-1/3}}{t^m}\xi \text{ for some $m\in\N$ and $\xi\in \R^n$,}
 \eeqs
 then the solution $y(t)$ admits the asymptotic expansion
 \beqs
 y(t)\sim \sum_{k=0}^\infty \frac{q_k(t)}{t^{m+k}},
 \eeqs
 where $q_k(t)=\widehat q_k(\widehat \LL_2(t))$ with $\widehat q_k\in \mathcal P_0^1(2,\R^n)$.
 Roughly speaking, the functions $q_k(t)$'s are composed by 
 \beq \label{basicf}
 \cos(\omega \iln_j(t)),\ 
 \sin(\omega \iln_j(t)),\  
 \iln_\ell(t)^\alpha,
 \eeq 
 for $j=0,1,2$ and $\ell=1,2$,  with some real numbers $\omega$'s and $\alpha$'s.
 \end{example}

\begin{example}
If, for large $t>0$, \beqs
 f(t)=\frac{ \cos(2 t) \sin(3 \ln\ln t) (\ln\ln\ln t)^2 \sin(5 \ln\ln\ln t)} {(\ln t)^{1/2}} \xi \text{ for some  $\xi\in \R^n$,}
 \eeqs
  then the solution $y(t)$ admits the asymptotic expansion
 \beqs
 y(t)\sim \sum_{k=1}^\infty \frac{q_k(t)}{(\ln t)^{k/2}},
 \eeqs
 where, roughly speaking,  $q_k(t)$'s are functions composed by the functions in \eqref{basicf} for $j=0,1,2,3$ and $\ell=2,3$.
\end{example}

\appendix
\section{}\label{append}

\begin{proof}[Proof of Lemma \ref{uniE}]
This proof follows \cite[Lemmas 2.3 and A.1]{HTi1}.

Suppose the conclusion is not true. Denote $s_\lambda=p_\lambda-q_\lambda$. Then $s_\lambda\ne 0$ for some $\lambda\in S$.

We write each $\lambda\in S$ as $\lambda=\mu+i\omega_\lambda$ with $\omega_\lambda\in\R$. We have from \eqref{gminush} that
\beq\label{limsl}
\lim_{t\to\infty} \sum_{\lambda\in S} s_\lambda(t) e^{i\omega_\lambda t}=0.
\eeq

Take $d_*$ to be the maximum of the degrees of the polynomials $s_\lambda$'s, for $\lambda\in S$. For $\lambda\in S$, write 
$$s_\lambda(t)=t^{d_*} \xi_\lambda + \text{ lower powers of $t$, with $\xi_\lambda\in X$.}$$

 Note that 
 \beq\label{xis} \exists \lambda\in S: \xi_\lambda\ne 0.
 \eeq 
 
  Dividing \eqref{limsl} by $t^{d_*}$ yields 
\beq\label{lims2}
\lim_{t\to\infty} \sum_{\lambda\in S} \xi_\lambda e^{i\omega_\lambda t}=0.
\eeq

Let $\{Y_j: 1\le j\le N\}$, for some $N\ge 1$, be a basis of the linear span of $\{\xi_\lambda:\lambda\in S\}$.

Write $\xi_\lambda=\sum_{j=1}^N c_{\lambda,j}Y_j$ where $c_{\lambda,j}$ are complex numbers.
For each $j=1,2,\ldots,N$, we have from \eqref{lims2} that
\beq\label{lims3}
\lim_{t\to\infty} \sum_{\lambda\in S} c_{\lambda,j} e^{i\omega_\lambda t}=0.
\eeq

\noindent\textbf{Claim.} $c_{\lambda,j}=0$ for all $\lambda\in S$.

Accepting this Claim momentarily, we then have, for each $\lambda\in S$, that  $c_{\lambda,j}=0$ for all $j$. Hence, $\xi_\lambda=0$ for all $\lambda\in S$, which contradicts \eqref{xis}. Therefore, the conclusion of Lemma \ref{uniE} must be true.

\medskip
Fix an integer $j\in[1,N]$, we prove the Claim from \eqref{lims3}.

Let $\omega_*=1+\max\{|\omega_\lambda|:\lambda\in S\}$. Multiplying \eqref{lims3} by $e^{i\omega_* t}$, we
rephrase the problem as the following. There are complex numbers $a_k$'s and strictly increasing positive numbers  $r_k$'s, for $1\le k\le m$, 
 such that
\beq\label{aklim}
\lim_{t\to\infty}\sum_{k=1}^m a_{k}e^{i r_k t}= 0,
\eeq
\beq\label{caset} \{c_{\lambda,j}:\lambda\in S\}=\{a_k:1\le k\le m\},\eeq 
and
 \beqs \{\omega_\lambda+\omega_*:\lambda\in S\}=\{r_k:1\le k\le m\}.\eeqs 

We will establish that 
\beq \label{akzero}
a_k=0\text{  for all }k=1,2,\ldots,m.
\eeq 

In fact, we consider a more general statement, namely,

($H_m$) ``\textit{If \eqref{aklim} holds for  complex numbers $a_k$'s and strictly increasing positive numbers  $r_k$'s, for $1\le k\le m$,  then \eqref{akzero} is true.}"

We prove, by induction, that ($H_m$) is true for all $m\in\N$.

Suppose $m=1$ and \eqref{aklim} holds. Then it is clear that $a_1=0$. Therefore, ($H_1$) is correct.

Let $m\ge 1$. Suppose ($H_m$) holds true.
Now, assume 
\beq\label{akmplus}
\lim_{t\to\infty}\sum_{k=1}^{m+1} a_{k}e^{i r_k t}= 0,
\eeq
for some complex numbers $a_k$'s and strictly increasing positive numbers  $r_k$'s, for $1\le k\le m+1$.

Integrating the sum in \eqref{akmplus} from $t$ to $t+2\pi/r_{m+1}$ and taking $t\to\infty$ give
\beqs
\lim_{t\to\infty} \left(\sum_{j=1}^{m}\frac{a_{k}}{ir_k}e^{i r_k t}(e^{2\pi i r_k/r_{m+1}}-1)  +0\right)=0.
\eeqs

By the Induction Hypothesis ($H_m$), we have 
\beq\label{newak}
\frac{a_{k}}{i r_k}(e^{2\pi i r_k/r_{m+1}}-1)=0 \text{ for } 1\le k\le m.
\eeq

Note in \eqref{newak} that $e^{2\pi i r_k/r_{m+1}}\ne 1$. Hence, $a_k=0$  for $1\le k\le m$.
Returning to \eqref{akmplus}, we then have 
\beqs
\lim_{t\to\infty}(a_{m+1}e^{i r_{m+1} t})= 0,
\eeqs
which yields $a_{m+1}=0$. Therefore, we obtain ($H_{m+1}$).

By the Induction Principle, the statement ($H_m$)  is true for all $m\in\N$.

Consequently, we have \eqref{akzero}, which,
together with  \eqref{caset}, implies that the Claim is true.

The proof of Lemma \ref{uniE} is complete.
\end{proof}

\begin{proof}[Proof of Proposition \ref{unipres}]
Because $X$ is not assumed to be a normed space, we cannot apply Lemma \ref{uniE} straightforwardly.

\textit{Part 1.} Suppose  $g$ has two representations
 \beq\label{ggsum}
g(t)=\sum_{\lambda\in S} p_\lambda(t)e^{\lambda t}\text{ and }
g(t)=\sum_{\lambda\in S} q_\lambda(t)e^{\lambda t},   \text{ for }t\in\R.
\eeq

By subtracting the above two formulas of $g(t)$, it suffices to assume now $g=0$ has the form \eqref{gEform}, and then prove that $p_\lambda=0$ for all $\lambda\in S$.

Suppose $p_\lambda(t)=\sum_{j=0}^{d_\lambda}t^j\xi_{\lambda,j}$ for $\lambda\in S$, and some vectors $\xi_{\lambda,j}$'s in $X$.
Let $E$ be the finite dimensional linear subspace of $X$ spanned by the vectors $\xi_{\lambda,j}$'s for $\lambda\in S$ and $0\le j\le d_\lambda$.
Let $\{Y_k:1\le k\le N\}$ be a basis of $E$. For any $\lambda\in S$ and  integer $j\in[1, d_\lambda]$, one has
$\xi_{\lambda,j}=\sum_{k=1}^N c_{\lambda,j,k}Y_k$ for some complex numbers $c_{\lambda,j,k}$.
We then have, for all $t\in \R$,
\beq
0=\sum_{\lambda\in S}\sum_{j=1}^{d_\lambda}e^{\lambda t} t^j\xi_{\lambda,j}
=\sum_{\lambda\in S}\sum_{j=1}^{d_\lambda}\sum_{k=1}^N e^{\lambda t} t^j  c_{\lambda,j,k}Y_k
=\sum_{k=1}^N \Big(\sum_{\lambda\in S}Q_{\lambda,k}(t)e^{\lambda t} \Big)Y_k,
\eeq
where $Q_{\lambda,k}(t)=\sum_{j=1}^{d_\lambda} t^j  c_{\lambda,j,k}$.
This implies, for each $k\in[1,N]$ and all $t\in \R$, that
\beq\label{Qsum}
\sum_{\lambda\in S}Q_{\lambda,k}(t)e^{\lambda t}=0.
\eeq

Note that each $Q_{\lambda,k}$ is a polynomial from $\R$ to $\C$.
Suppose $\Re S=\{\mu_1>\mu_2>\ldots>\mu_s\}$. 
We decompose 
$$S=\bigcup_{m=1}^s S_m\text{ with }S_m=\{\lambda\in S:\Re\lambda=\mu_m\}.$$

We then write the sum in \eqref{Qsum} as $\sum_{\lambda\in S}=\sum_{\lambda\in S_1}+\sum_{\lambda\in S_2}+\ldots+\sum_{\lambda\in S_m}$.
Dividing \eqref{Qsum} by $e^{\mu_1 t}$ and taking $t\to\infty$ give
\beq
\lim_{t\to\infty} e^{-\mu_1 t}\sum_{\lambda\in S_1}Q_{\lambda,k}(t)e^{\lambda  t}=0.
\eeq

Applying Lemma \eqref{uniE} to $X=\C$, $S=S_1$ and $p_\lambda=Q_{\lambda,k}$, $\mu=\mu_1$, $q_\lambda=0$, we have the polynomials $Q_{\lambda,k}=0$ for all $\lambda\in S_1$.
With this fact, the sum in \eqref{Qsum} is reduced to 
$\sum_{\lambda\in S_2}+\ldots+\sum_{\lambda\in S_m}$.
Repeating the above arguments with $S_2$ replacing $S_1$, we obtain $Q_{\lambda,k}=0$ for all $\lambda\in S_2$. By this recursive reasoning, we obtain $Q_{\lambda,k}=0$ for all $\lambda\in S_m$ for $1\le m\le s$, that is, 
$Q_{\lambda,k}=0$ for all $\lambda\in S$.
Because each $Q_{\lambda,k}(t)$ is a polynomial, this yields that its coefficients $c_{\lambda,j,k}$'s, for $0\le j\le d_\lambda$, are zeros.
Now that $c_{\lambda,j,k}=0$ for all $\lambda,j,k$, we infer $\xi_{\lambda,j}=0$ for all $\lambda,j$, and hence $p_\lambda=0$ for all $\lambda$.

\textit{Part 2.}  Now, assume $g$ has two representations
 \beqs
g(t)=\sum_{\lambda\in S'} p_\lambda(t)e^{\lambda t}\text{ and }
g(t)=\sum_{\lambda\in S''} q_\lambda(t)e^{\lambda t},   \text{ for }t\in\R,
\eeqs
with $p_\lambda\ne 0$ for all $\lambda\in S'$ and $q_\lambda\ne 0$ for all $\lambda\in S''$. 

Set $S=S'\cup S''$, define $p_\lambda=0$ for $\lambda\in S\setminus S'$, and $q_\lambda=0$ for $\lambda\in S\setminus S''$. Then we have \eqref{ggsum}. Thanks to  Part 1, $p_\lambda=q_\lambda$ for all $\lambda\in S$. Thus, $S'=S''=S$. 
\end{proof}

\bibliographystyle{abbrv}
\def\cprime{$'$}

 \end{document}